\newtheorem{theorem}{Theorem}[section]
\newtheorem{lemma}[theorem]{Lemma}
\newtheorem{corollary}[theorem]{Corollary}
\theoremstyle{definition}
\newtheorem{definition}[theorem]{Definition}
\theoremstyle{remark}
\newtheorem{remark}[theorem]{Remark}
\numberwithin{equation}{section}
\begin{document}

\title{Defects of liquid crystals with variable degree of orientation}

\author{Onur Alper}
\address[Onur Alper]{Courant Institute of Mathematical Sciences, \newline \indent 251 Mercer Street, New York, NY 10012, USA}
\email{alper@cims.nyu.edu}

\author{Robert Hardt}
\address[Robert Hardt]{Mathematics Department, Rice University, \newline \indent Houston, TX 77251, USA}
\email{hardt@math.rice.edu}

\author{Fang-Hua Lin}
\address[Fang-Hua Lin]{Courant Institute of Mathematical Sciences, \newline \indent 251 Mercer Street, New York, NY 10012, USA}
\email{linf@cims.nyu.edu}





\begin{abstract}
The defect set of minimizers of the modified Ericksen energy for nematic liquid crystals
consists locally of a finite union of isolated points and H\"older continuous curves with finitely many crossings.
\end{abstract}

\maketitle

\section{Introduction}

\subsection{The Ericksen model and related works}

A liquid crystal is a liquid exhibiting some anisotropic optical behavior. 
For each point in a spatial region $\Omega \subset \mathbf{R}^3$, consider a probability distribution $l$ of unit vectors for the direction of a symmetric elongated molecule.
While the first moment (i.e. average) of $l$ is $0$ by symmetry, the second moment $\left \langle l \otimes l \right \rangle = \left \langle l_i l_j \right \rangle$ can
reveal the anisotropy. Note that the second moment is given by a positive, symmetric matrix with trace $1$. 
In the uniaxial nematic regime, we assume that its traceless part has $2$ equal eigenvalues. Hence, in this regime, we can write:
\begin{equation*}
\left \langle l \otimes l \right \rangle - \frac{1}{3} id = s \left [ (n \otimes n ) - \frac{1}{3} id \right ],
\end{equation*}
where $| n | = 1$, $s = \frac{3}{2} \left \langle l \otimes n \right \rangle^2 - \frac{1}{2} \in \left [ -1/2, 1 \right ]$.
The director field is completely ordered $(l = \pm n )$ in the case $s=1$, completely random $\left ( \langle l \otimes n \rangle^2 = 1/3 \right )$ for $s=0$,
and completely orthogonal $( l \cdot n \equiv 0 )$ for $s=-1/2$.

For $s$ constant, the assumptions of at most quadratic dependence on the gradient, frame indifference and material symmetry lead to 
the Oseen-Frank free energy $\int_{\Omega} W(n(x)) \, \mathrm{d}x$ for:
\begin{equation*}
 W(n) = \kappa_1 | \mathrm{div} n |^2 + \kappa_2 | n \cdot \mathrm{curl} n |^2 + \kappa_3 | n \times \mathrm{curl} n |^2 
+ \left ( \kappa_2 + \kappa_4 \right ) \left [ \mathrm{tr} ( \nabla n )^2 - ( \mathrm{div}n )^2 \right ],
\end{equation*}
where $\kappa_1$, $\kappa_2$, $\kappa_3 > 0$. Note that the special case $\kappa_1 = \kappa_2 = \kappa_3 = 1$ and $\kappa_4 = 0$ corresponds to $W(n) = | \nabla n |^2$,
and hence critical unit vector fields coincide with harmonic maps from $\Omega$ to $\mathbf{S}^2$. By \cite{SU82} minimizers have only isolated singularities in this special case.
For the general Oseen-Frank energy, it is proved in \cite{HKL86} that minimizers exist and the set of their discontinuities always has Hausdorff dimension strictly less than $1$.
In particular, the Oseen-Frank theory does not allow finite energy minimizing configurations with a line singularity. 

In 1985 J.L. Ericksen suggested a model with variable $s$.
As in \cite{Maddocks} and \cite{Ericksen91}, the energy $\int_{\Omega} X(s,n) \, \mathrm{d}x$ is proposed, where:
\begin{equation*}
X(s,n) = s^2 W(n) + \kappa_5 | \nabla s|^2 + \kappa_6 | \nabla s \cdot n |^2 + \psi(s),
\end{equation*}
with a $C^2$-potential $\psi$ satisfying the following requirements: 
\begin{enumerate}[(i)]
 \item $\lim_{s \to -1/2} \psi(s) = + \infty$,
 \item $\lim_{s \to 1} \psi(s) = + \infty$,
 \item $\psi'(0) = 0$,
 \item $\psi$ has a minimum at some $s_* \in (0,1)$.  
\end{enumerate}
See \cite[Fig.1]{Maddocks}.
A challenge that the integrand $X(s,n)$ poses when $W(n)$ depends quadratically on $\nabla n$ is the choice of a function space for $n$. Due to the factor $s^2$ in front of $W(n)$,
it is not natural to require $\nabla n$ to be square-integrable. Furthermore, the ellipticity of formal Euler-Lagrange equations degenerates on the zero set of $s$.

In \cite{Lin89} and \cite{Lin91}, in the case $\kappa_1 = \kappa_2 = \kappa_3 = 1$, $\kappa_4 = \kappa_6 = 0$, and $\kappa_5 = k$, the existence and regularity theory for a minimizing pair
$(s,n)$ is related to that of a minimizing harmonic map into a cone, via recasting them as: $u = \left ( |k-1|^{1/2} s, sn \right )$ and observing that:
\begin{equation*}
X(s,n) = k |\nabla s |^2 + s^2 | \nabla n |^2 + \psi(s) = | \nabla u |^2 + \psi \left ( k^{-1/2} |u| \right ), 
\end{equation*}
where the image of $u$ is constrained to lie in the round cone:
\begin{equation*}
\mathbf{C}_k = \left \{ (y,z) \in \mathbf{R}^3 \times \mathbf{R} \, : \, z = |k-1|^{1/2} |y| \right \}. 
\end{equation*}
One can also consider the corresponding two-sheeted cone, hence allowing $s$ to take negative values as well, but here we restrict our attention the case $s \geq 0$.  
For $k>1$, $\mathbf{C}_k$ is positively curved (in the Alexandrov sense) with the metric induced from Euclidean space $\mathbf{R}^3 \times \mathbf{R}$, 
while for $k=1$ it is flat in Euclidean space $\mathbf{R}^3 \times \{0\}$. 
Finally for $k \in (0,1)$ it is negatively curved with the metric induced from Minkowski space $\mathbf{R}^{3+1}$. Furthermore, in the last case, the ambient metric is positive definite
when restricted to $\mathbf{C}_k$. 
In particular, it is proved in \cite{Lin91} that $u$ is locally H\"older continuous in the case $k>1$, locally Lipschitz in the case $k \in (0,1)$ and smooth in the case $k=1$. 
Moreover, in all cases, $u$ is analytic away from the preimage of the vertex $u^{-1} \{ 0 \} = s^{-1} \{ 0 \}$, and $\mathrm{sing}(n) = u^{-1} \{0\}$, which we define as the \emph{defect set}.

We note that \cite{Ambrosio90} and \cite{Ambrosio90R} also address the existence and regularity of minimizing pairs $(s,n)$, yet without making use of the harmonic map formulation. 
The question of existence and regularity for the Ericksen model with general material constants was also addressed in \cite{LinPoon94}.
See also \cite{Wang2001} for a generalization of the techniques in \cite{Lin89} and \cite{Lin91} to the context of energy minimizing maps into more general Lipschitz targets. 

In addition, using a dimension reduction argument based on the monotonicity of Almgren frequency, the following Hausdorff dimension estimates were proved in \cite{Lin89}, \cite{Lin91}
for nontrivial minimizing maps:
$\mathrm{dim} \left ( u^{-1} \{0\} \right ) \leq 2$ for $k \in (0,1]$ and  $\mathrm{dim} \left ( u^{-1} \{0\} \right ) \leq 1$ for $k >1$.
By \cite{AmbrosioVirga91}, the first estimate is sharp, as \emph{wall defects} occur in this case. 
On the other hand, the second estimate was improved in \cite{HardtLin93}. By proving that there are no homogeneous, energy minimizing maps depending only on two variables in this case, 
it was shown that there cannot be any such tangent maps either. 
Hence, by the dimension reduction argument in \cite{Lin89}, \cite{Lin91}, the defect set must consist of isolated points in the case $k>1$.

Since an important goal is to understand the experimentally observed $1$ dimensional defects in nematic liquid crystals, a modified Ericksen model was also introduced in \cite{HardtLin93}.
As the head and tail of a nematic liquid crystal molecule are indistinguishable, it is natural to describe the director field using the projective plane
$\mathbf{RP}^2 = \left \{ [y] \, : \, y \in \mathbf{S}^2 \right \}$, where $[y] = \{ y , - y \}$ is the sign equivalence class for $y \in \mathbf{R}^3$, and $\mathbf{RP}^2$ inherits
a round metric from its double cover $\mathbf{S}^2$. Hence, in the context of Ericksen's variable degree of orientation model, replacing $\mathbf{S}^2$ with $\mathbf{RP}^2$, we get
the cone:
\begin{equation*}
\mathbf{D}_k = \left \{ \left ( [y], z \right ) \, : \, (y,z) \in \mathbf{C}_k \right \},
\end{equation*}
which inherits a round metric from $\mathbf{C}_k$. 

The existence, uniqueness and regularity theory for $\mathbf{C}_k$-valued energy minimizing maps carry over veribatim to the case of $\mathbf{D}_k$-valued energy minimizing maps.
However, an important difference is that simple closed geodesics, i.e. great circles of length $\pi$, are not contractible in $\mathbf{RP}^2$, unlike those in $\mathbf{S}^2$. 
While $\mathbf{RP}^2$-valued maps minimizing the Dirichlet or Oseen-Frank energies do not differ from their $\mathbf{S}^2$-valued counterparts 
in terms of the size of their singular sets or their asymptotic behavior near them (cf. \cite{BCL86} and \cite{HKL86}), the nontrivial topology of $\mathbf{RP}^2$ does have an effect
in the context of Ericksen's variable degree of orientation model. In particular, one observes line defects for $\mathbf{D}_k$-valued energy minimizing maps in the case $s \geq 0$, $k>1$.
See Remark \ref{linedefect} for an example. In particular, the Hausdorff dimension estimate $\mathrm{dim} \left ( u^{-1} \{ 0 \} \right ) \leq 1$ is optimal for $\mathbf{D}_k$-valued
energy minimizing maps, in contrast to the case of $\mathbf{C}_k$-valued ones. 

See also \cite{BZ2011} for an extensive discussion on the use of $\mathbf{RP}^2$ in modeling uniaxial nematics, relations with the Landau-de Gennes theory, 
issues of suitable function spaces, orientability of line fields, and boundary conditions.
For another result on line defects in liquid crystals, see \cite{Canevari}, which considers the vanishing elastic constant limit in a Landau-de Gennes model,
in the spirit of the Ginzburg-Landau theory.

\subsection{The main result and an overview}

Our main result is that for any energy minimizing $u \, : \, \Omega \to \mathbf{D}_k$ and any compact $K \subset \Omega$, $u^{-1} \{0\} \cap K$
consists of isolated points and a finite union of H\"older continuous curves with only finitely many crossings.
The finiteness of $\mathcal{H}^1 \left ( u^{-1} \{0\} \cap K \right )$ is proved in \cite{ARect} by the first author
by combining the blow-up analysis in this article with the ideas in \cite{NV} and \cite{DLMSV}.
Together with Corollary \ref{mainresult}, this result implies the local rectifiability of the {\emph{defect set}} $u^{-1} \{0\}$.

As we are interested in the local behavior of the zero set, we can assume $\psi \equiv 0$. 
Our results are also valid in the presence of a $C^2$-potential $\psi$ with $\psi'(0) =0$, as the variational identities and monotonicity formulas in Lemma \ref{monotonocity}  
have their generalizations in \cite[Section 3]{Lin89} and \cite[Section 3]{Lin91}.

The strategy of the proof is parallel to the analysis of the singular set of minimizing harmonic maps from a domain in $\mathbf{R}^4$ to $\mathbf{S}^2$ in \cite{HardtLin90}. 
While the monotonicity of the renormalized Dirichlet energy is the key element in analyzing the singular set in \cite{HardtLin90}, the monotonicity of the Almgren frequency
plays an analogous central role in understanding the zero set. 

As in \cite{HardtLin90}, our main goal is to verify the hypothesis of
Reifenberg's topological disk theorem away from finitely many points in $u^{-1} \{ 0 \}$. 
We also prove that for every $\epsilon >0$,  at all but finitely many points $b \in u^{-1} \{ 0 \}$,
at all scales $r>0$, there is a line $L_{b,r}$, depending on both $b$ and $r$, such that $u^{-1} \{0 \}$ and $L_{b,r}$ both restricted to $B_r(b) \subset \Omega$ have Hausdorff
distance less than $r \epsilon$ between them. If the conclusion holds for all $b$ in a neighborhood, then Reifenberg's theorem states that the defect set must be an embedded
H\"older curve in that neighborhood, cf. \cite{Reifenberg60}.
By showing that there can be at most finitely many points where the hypothesis possibly fails, we infer that the defect set must be a union
of finitely many H\"older curves with finitely many crossings and isolated points.

In Section \ref{Preliminaries}, we introduce the key definitions, monotonicity, compactness and (partial) regularity results from \cite{Lin89}, \cite{Lin91}, which we will use in the rest of the paper. 
In Section \ref{HomogeneousMinimizers}, we consider homogeneous minimizers that arise as the blow-up limits of the minimizing map $u$ rescaled near its zeros. 
In Section \ref{Decomposition}, we decompose the zero set based on the zeros of tangent maps at each of its points. This decomposition allows us to arrive at the following conclusion: 
near the top dimensional part of the zero set, at every scale, $u$ can be approximated by a corresponding homogeneous minimizer, and the remaining part of the zero set is discrete. 
Finally, in Section \ref{Structure}, we prove the main result on the structure of the defect set.

\section{Preliminaries} \label{Preliminaries}

We recall some results from \cite{Lin89} and \cite{Lin91} that will play a key role in the analysis of the defect set.

\begin{lemma} \label{subharmonicity}
For $\Omega \subset \mathbf{R}^d$ and any energy minimizing map $u : \Omega \to \mathbf{D}_k$, the equation
\begin{equation}
\Delta | u |^2 = 2 | \nabla u |^2
\label{subhdistance} 
\end{equation}
holds in the sense of distributions. 
In particular, $\left | u \right |^2$, the squared distance of $u(x)$ from the vertex of $\mathbf{D}_k$, is a subharmonic function on $\Omega$.
Furthermore, $u$ satisfies the following stationarity identity:
\begin{equation}
\mathrm{div} \left ( 2 \nabla u \otimes \nabla u - | \nabla u |^2 id \right ) = 0. 
\label{StationarityIdentity}
\end{equation}
In other words, the stress-energy tensor of $u$ is divergence free.
\end{lemma}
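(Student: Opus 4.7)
The plan is to derive all three identities from first variations of the energy, exploiting the dilation invariance of the cone $\mathbf{D}_k$ for the pointwise identity and the domain's Euclidean structure for the stress-energy identity.

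For the distributional identity \eqref{subhdistance}, I would use an outer variation that is automatically cone-valued. Given any $\eta \in C_c^\infty(\Omega)$, set $u_t(x) = (1 + t\eta(x))\, u(x)$. Because $\mathbf{D}_k$ is invariant under positive scalings (its defining relation $z = |k-1|^{1/2}|y|$ is homogeneous of degree one), the map $u_t$ still takes values in $\mathbf{D}_k$ for $|t|$ small enough that $1 + t\eta > 0$, so $u_t$ is an admissible competitor. A direct computation gives
\begin{equation*}
|\nabla u_t|^2 = (1+t\eta)^2 |\nabla u|^2 + 2t(1+t\eta)\, \nabla \eta \cdot \tfrac{1}{2}\nabla |u|^2 + t^2 |\nabla \eta|^2 |u|^2.
\end{equation*}
Differentiating $\int |\nabla u_t|^2\, dx$ at $t=0$ and using the minimality of $u$ yields
\begin{equation*}
\int_\Omega \nabla \eta \cdot \nabla |u|^2\, dx + 2 \int_\Omega \eta\, |\nabla u|^2\, dx = 0,
\end{equation*}
which is precisely $\Delta |u|^2 = 2 |\nabla u|^2$ in $\mathcal{D}'(\Omega)$. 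Subharmonicity of $|u|^2$ is then immediate since $|\nabla u|^2 \geq 0$.

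For the stationarity identity \eqref{StationarityIdentity}, I would use an inner variation. Fix $X \in C_c^\infty(\Omega; \mathbf{R}^d)$, let $\phi_t(x) = x + tX(x)$, which is a diffeomorphism of $\Omega$ for $|t|$ small, and set $u_t = u \circ \phi_t^{-1}$. Since $u_t$ has the same image as $u$, it is $\mathbf{D}_k$-valued, and it agrees with $u$ outside a compact set, so it is an admissible competitor. Changing variables and expanding $[D\phi_t^{-1}](y) = I - t\, DX(y) + O(t^2)$ and $\det D\phi_t(x) = 1 + t\, \mathrm{div}\, X + O(t^2)$, the standard computation gives
\begin{equation*}
\frac{d}{dt}\bigg|_{t=0} \int_\Omega |\nabla u_t|^2\, dy = \int_\Omega \bigl[\, |\nabla u|^2\, \mathrm{div}\, X - 2\, \partial_i u \cdot \partial_j u\, \partial_i X_j\,\bigr] dx = 0.
\end{equation*}
This is the weak form of $\partial_i \bigl(2\, \partial_i u \cdot \partial_j u - |\nabla u|^2 \delta_{ij}\bigr) = 0$, i.e. \eqref{StationarityIdentity}.

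The only delicate point is the admissibility of the two variations in the function space of \cite{Lin89, Lin91}, since $u$ need not be smooth and may hit the vertex of $\mathbf{D}_k$. The outer variation preserves the cone thanks to its dilation invariance, while the inner variation is a reparametrization; in both cases the required Sobolev bounds follow from the finite energy of $u$ and boundedness of $\eta$ and $X$. This is the step I expect to require the most care, but it is handled exactly as in the energy-minimality framework set up in \cite{Lin89} and \cite{Lin91}.
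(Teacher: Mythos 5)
Your proposal is correct and follows essentially the same route as the paper: the outer variation $u_t=(1+t\eta)u$, admissible by the dilation invariance of the cone, yields \eqref{subhdistance}, and the inner (domain) variation generated by a compactly supported vector field yields \eqref{StationarityIdentity}. The computations and the remark about admissibility match the argument given in the paper (which in turn defers details to \cite{Lin89} and \cite{Lin91}).
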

\begin{proof}
Observe that for $\phi \in C_0^\infty(\Omega)$, $u_t = \left ( 1 + t \phi \right ) u$ maps $\Omega$ to $\mathbf{D}_k$, and hence it is an admissible variation.
By the minimality of $u$ we have:
\begin{equation*}
0 = \frac{\mathrm{d}}{\mathrm{d}t}\Bigr |_{t=0} \int_{\Omega} \left | \nabla u_t \right |^2 \, \mathrm{d}x 
= \int_{\Omega} \left \langle \nabla |u|^2, \nabla \phi \right \rangle \, \mathrm{d}x + \int_{\Omega} 2 \left | \nabla u \right |^2  \phi \, \mathrm{d}x,
\end{equation*}
for any $\phi \in C_0^\infty(\Omega)$, which is equivalent to \eqref{subhdistance}.

As $u$ is an energy minimizing map from $\Omega$ into $\mathbf{D}_k$, it is stationary with respect to the domain variations $u \left ( \Psi_t (x) \right )$, where
$\Psi_t \, : \, B_r(a) \to B_r(a)$, $t \in (-\epsilon, \epsilon)$, is a $C^1$-family of diffeomorphisms satisfying $\Psi \in C_0^1 \left ( B_r(a) \right )$ and $\Psi_0(x) = x$.
Therefore, considering the first variation of the Dirichlet energy with respect to a family of diffeomorphisms generated by an arbitrary vector field $X$, 
we obtain the stationarity identity \eqref{StationarityIdentity} in the sense of distributions.
\end{proof}

Before stating the next result, we need to introduce new notations for key quantities that play a role in our analysis. We state these results
in the general case of $d$ dimensions.
\begin{subequations}
\begin{align}
D(a;r) = \int_{B_r(a)} | \nabla u |^2 \, \mathrm{d}x, \label{Dirichlet} \\
E(a;r) = r^{2-d} \int_{B_r(a)} | \nabla u |^2 \, \mathrm{d}x, \label{renormDirichlet} \\
H(a;r) = \int_{ \partial B_r(a)} | u |^2 \, \mathrm{d}A, \label{sdistance} \\
N(a;r) = \frac{r D(a;r)}{H(a;r)}. \label{Almgren}
\end{align}
\end{subequations}
Note that $D(a;r)$ is the Dirichlet energy of the map $u$, $E(a;r)$ is the renormalized Dirichlet energy, and $N(a;r)$
Almgren originally defined in \cite{Almgren83}. Since $|u|^2$ is subharmonic, $H(a;r)$ is a natural quantity to consider as well.

\begin{lemma} \label{monotonocity}
For any energy minimizing map $u \, : \, \Omega \to \mathbf{D}_k$,
for almost every $r \in \left (0, \mathrm{dist}(a,\partial \Omega) \right )$, the following monotonicity identities hold:
\begin{subequations}
\begin{align}
\frac{\mathrm{d}}{\mathrm{d}r} \left [ r^{1-d} H(a;r) \right ] =  2 r^{1-d} D(a;r) \geq 0, \label{subharmonicaverage} \\
\frac{\mathrm{d}}{\mathrm{d}r}  E(a;r)  = \frac{2}{r^{d-2}} \int_{\partial B_r(a)}
\left | \frac{\partial u}{\partial | x - a | } \right |^2 \, \mathrm{d}A \geq 0, \label{harmonicmapmonotonicity} \\
\frac{\mathrm{d}}{\mathrm{d}r} N(a;r) \geq 0; \label{almgrenmonotonicity}
\end{align}
\end{subequations}
the limit $N \left ( a; 0^+ \right ) = \lim_{r \downarrow 0} N(a;r) $ exists, and $N \left ( a; 0^+ \right )$ is upper semicontinuous in $a$.
Moreover, \eqref{almgrenmonotonicity} holds with equality, if and only if $u$ is homogeneous of degree $N \left ( a; 0^+ \right )$ with respect to $|x-a|$.
\end{lemma}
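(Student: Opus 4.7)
My plan is to derive the three monotonicity identities in sequence from the two variational identities established in Lemma \ref{subharmonicity}, with a single Cauchy--Schwarz estimate providing the positivity in the Almgren frequency case. All three arguments proceed via polar coordinates and integration by parts on balls.

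For \eqref{subharmonicaverage}, I would rewrite $r^{1-d} H(a;r) = \int_{\partial B_1(0)} \left| u(a + r \omega) \right|^2 \, \mathrm{d}\omega$ and differentiate under the integral to obtain $2 r^{1-d} \int_{\partial B_r(a)} u \cdot (\partial u / \partial | x - a |) \, \mathrm{d}A$. Since $\partial_\nu |u|^2 = 2 u \cdot \partial_\nu u$, the divergence theorem combined with \eqref{subhdistance} converts the boundary integral into $D(a;r)$, yielding both the equality and its nonnegativity. For \eqref{harmonicmapmonotonicity}, I would test \eqref{StationarityIdentity} against a radial cutoff vector field $X(x) = (x - a) \chi_\varepsilon(| x - a |)$, with $\chi_\varepsilon$ a Lipschitz approximation of $\mathbf{1}_{[0,r)}$, and pass to the limit $\varepsilon \to 0$. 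This produces the Pohozaev-type identity
\begin{equation*}
(d - 2) D(a;r) = r \int_{\partial B_r(a)} |\nabla u|^2 \, \mathrm{d}A - 2 r \int_{\partial B_r(a)} \left| \frac{\partial u}{\partial |x - a|} \right|^2 \mathrm{d}A
\end{equation*}
for a.e. $r$, which upon substitution into $\frac{\mathrm{d}}{\mathrm{d}r} E(a;r) = r^{1-d}[(2 - d) D(a;r) + r D'(a;r)]$ yields the claimed identity with its manifest nonnegativity.

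For \eqref{almgrenmonotonicity} the logarithmic derivative $(\log N)'(a;r) = 1/r + D'(a;r)/D(a;r) - H'(a;r)/H(a;r)$, together with $H' = (d-1) H/r + 2D$ from step one and the Pohozaev identity in the form $D' = (d-2) D/r + 2 \int_{\partial B_r(a)} \left| \partial u / \partial | x - a | \right|^2 \mathrm{d}A$, has all the dimensional $1/r$ terms cancel, leaving
\begin{equation*}
\frac{1}{2} \frac{N'(a;r)}{N(a;r)} = \frac{\int_{\partial B_r(a)} \left| \frac{\partial u}{\partial |x - a|} \right|^2 \mathrm{d}A}{D(a;r)} - \frac{D(a;r)}{H(a;r)}.
\end{equation*}
Since step one also gives $D(a;r) = \int_{\partial B_r(a)} u \cdot (\partial u / \partial |x - a|) \, \mathrm{d}A$, the Cauchy--Schwarz inequality produces $D(a;r)^2 \leq H(a;r) \int_{\partial B_r(a)} \left| \partial u / \partial |x - a| \right|^2 \mathrm{d}A$, proving $N'(a;r) \geq 0$. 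Equality on an interval of radii forces $\partial u / \partial |x - a| = \lambda(r) u$ on each sphere $\partial B_r(a)$, with $r \lambda(r)$ constant equal to $N(a;0^+)$, which integrates along radii to exhibit $u$ as homogeneous of that degree about $a$.

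Existence of $N(a;0^+)$ is immediate from monotonicity. For upper semicontinuity, I would fix $r > 0$ small with $H(a;r) > 0$, observe that $b \mapsto N(b;r)$ is continuous at $a$ for such $r$, and conclude by taking the $\limsup$ of $N(a_n;0^+) \leq N(a_n;r) \to N(a;r)$ along $a_n \to a$ and then letting $r \downarrow 0$. The main obstacle I anticipate is the rigorous justification of the Pohozaev identity: since \eqref{StationarityIdentity} holds only distributionally, one must approximate the radial cutoff $X$ by smooth compactly supported vector fields, control the passage to the limit using the trace of $\nabla u$ on spheres, and select a set of radii of full measure on which the boundary integrals are pointwise meaningful. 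Once that technicality is in hand, the remainder of the proof reduces to algebraic manipulation and a single Cauchy--Schwarz estimate.
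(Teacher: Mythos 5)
Your proposal is correct and follows essentially the same route as the paper: the derivative of $H$ via \eqref{subhdistance}, the Pohozaev-type identity from testing \eqref{StationarityIdentity} with a radial cutoff field, the logarithmic derivative of $N$ closed by Cauchy--Schwarz applied to $D(a;r)=\int_{\partial B_r(a)} u\cdot u_{|x-a|}\,\mathrm{d}A$, and the equality case yielding homogeneity. The only (minor) divergence is in the upper semicontinuity step, where you assert continuity of $b\mapsto N(b;r)$; the paper avoids justifying this by using only one-sided bounds, namely the inclusion $B_\rho(a_j)\subset B_{\rho+\epsilon}(a_\infty)$ for the numerator and Fatou's lemma for the denominator, which suffices for the $\limsup$ estimate.
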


\begin{proof}
The proofs are in \cite{Lin89} and \cite{Lin91}. We sketch them for completeness.
Note:
\begin{equation}
\begin{split}
H'(a;r) & = \frac{\mathrm{d}}{\mathrm{d}r} \left ( r^{d-1} \int_{\partial B_1(0)} |u|^2 (a+rx) \, \mathrm{d}A \right ) \\
	& = (d-1)r^{d-2} \int_{\partial B_1(0)} |u|^2 (a+rx) \, \mathrm{d}A + \int_{\partial B_r(a)} \partial_{|x-a|} \left ( |u|^2 \right ) \mathrm{d}A \\
	& = \frac{d-1}{r} H(a;r) + 2 D(a;r).
\end{split} \label{hprime}
\end{equation}
The last equality is obtained by testing \eqref{subhdistance} with $\phi_k \in C_0^\infty \left ( B_r(a) \right )$ approximating the characteristic function of $B_r(a)$ 
and the divergence theorem:
\begin{equation}
\begin{aligned}
 D(a;r) = \lim_{k \to \infty} \frac{1}{2} \int_{B_r(a)} \left < \nabla |u|^2, \nabla \phi_k \right > \mathrm{d}x & =
\lim_{k \to \infty} \frac{1}{2} \int_{\partial B_r(a)} \partial_{|x-a|} \left ( |u|^2 \right ) \phi_k \, \mathrm{d}A \\ 
& = \lim_{k \to \infty} \int_{\partial B_r(a) } u \cdot u_{|x-a|} \phi_k \, \mathrm{d}A \label{dividentity} \\
& = \int_{\partial B_r(a) } u \cdot u_{|x-a|} \, \mathrm{d}A.
\end{aligned}
\end{equation}
Now \eqref{subharmonicaverage} follows immediately from \eqref{hprime}.
Likewise, testing \eqref{StationarityIdentity} with vector fields $\phi_k \left ( |x-a| \right ) r^{-1} (x-a)$, 
where $\phi_k \in C_0^\infty \left ( B_r(a) \right )$ approximate the characteristic function
of $B_r(a)$, we obtain \eqref{harmonicmapmonotonicity} in the limit $k \to \infty$.

In order to prove \eqref{almgrenmonotonicity}, namely the monotonicity of Almgren frequency, we take the logarithmic derivative of $N(a;r)$ and get:
\begin{equation}
 \begin{split}
  N'(a;r) & = N(a;r) \frac{\mathrm{d}}{\mathrm{d}r} \left [ \log \left ( r^{2-d} D(a;r) \right ) - \log \left ( r^{1-d}  H(a;r) \right )  \right ] \\
          & =  2 N(a;r) \left [ \frac{\int_{\partial B_r(a)} \left | u_{|x-a|} \right |^2 \, \mathrm{d}A }{D(a;r)} - \frac{D(a;r)}{H(a;r)} \right ], \label{logdiff}
 \end{split}
\end{equation}
where the second equality is obtained by applying \eqref{harmonicmapmonotonicity} and \eqref{hprime} to the first and third terms respectively.
Plugging \eqref{dividentity} in the denominator of the first term and the numerator of the second term in \eqref{logdiff},
and applying the Cauchy-Schwarz inequality to each, we obtain \eqref{almgrenmonotonicity}. The existence of the limit
$N \left ( a; 0^+ \right ) = \lim_{r \downarrow 0} N(a;r) $ is clear from \eqref{almgrenmonotonicity}.

To show the upper semicontinuity of $N \left ( a; 0^+ \right )$, we assume $\lim_{j \to \infty} a_j = a_\infty$ in $\Omega$ and note by \eqref{almgrenmonotonicity}:
\begin{equation*}
\lim \sup_{j \to \infty} N \left ( a_j ; 0^+ \right ) \leq \lim \sup_{j \to \infty} \rho \frac{D \left (a_j ; \rho \right ) }{ H \left ( a_j ; \rho \right ) }
\leq \rho \frac{\lim \sup_{j \to \infty}  D \left (a_j ; \rho \right )}{\lim \inf_{j \to \infty}  H \left ( a_j ; \rho \right )}.
\end{equation*}
Estimating the numerator by observing that $B_{\rho} \left ( a_j \right ) \subset B_{\rho + \epsilon} \left ( a_\infty \right ) $ for large enough $j$,
and applying Fatou's lemma to the denominator, we get:
\begin{equation*}
\lim \sup_{j \to \infty} N \left ( a_j ; 0^+ \right ) \leq \rho \frac{D \left (a_\infty ; \rho + \epsilon \right )}{H \left ( a_\infty; \rho \right )},
\end{equation*}
for every $\epsilon > 0$. First letting $\epsilon \downarrow 0$ and then letting $\rho \downarrow 0$, we obtain:
\begin{equation}
\lim \sup_{j \to \infty } N \left ( a_j ; 0^+ \right ) \leq N \left ( a_\infty; 0^+ \right ).
\label{uppersemicontinuity} 
\end{equation}

Finally, note that \eqref{almgrenmonotonicity} holds with equality, if and only if the Cauchy-Schwarz inequality applied above holds with equality.
But this is equivalent to $u$ being homogeneous with respect to $r = |x-a|$ at almost every $r \in \left (0, \mathrm{dist}(a,\partial \Omega) \right )$.
In this case, we can apply \eqref{dividentity} and the Cauchy-Schwarz inequality to the numerator of $N(a;r)$ to observe that 
the degree of homogeneity has to be $N(a;r)$.
\end{proof}

Next, we state two useful estimates that are closely related to the above monotonicity formulas.

\begin{lemma} \label{doubling}
For any energy minimizing map $u \, : \, \Omega \to \mathbf{D}_k$, $0 < r < R$ and $B_R(a) \subset \Omega$, the following inequalities hold:
\begin{equation}
\left ( \frac{r}{R} \right )^{d-1+2N(a;R)} H(a;R) \leq H(a;r) \leq \left (\frac{r}{R} \right )^{d-1+2N(a;r)} H(a;R). \label{Hcomparison}
\end{equation}
Moreover, whenever $|a| < r < R$ and $|a| + r < R$, for every $\rho \in (0, r )$ we have:
\begin{equation}
N(a;\rho) \leq C_1\left (r, R, |a| \right ) N(0;R) + C_2 \left ( r, R, |a| \right ) . \label{localFcontrol}
\end{equation}
\end{lemma}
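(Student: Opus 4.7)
The first estimate \eqref{Hcomparison} I would prove by integrating a logarithmic derivative. The identity \eqref{hprime} from the proof of Lemma \ref{monotonocity} can be rewritten as
\[
\frac{d}{dr} \log H(a;r) = \frac{d-1}{r} + \frac{2 N(a;r)}{r}.
\]
Integrating from $r$ to $R$ and invoking the Almgren monotonicity \eqref{almgrenmonotonicity}, which yields $N(a;r) \leq N(a;s) \leq N(a;R)$ for $s \in [r,R]$, produces
\[
(d-1+2N(a;r))\log(R/r) \leq \log \bigl(H(a;R)/H(a;r)\bigr) \leq (d-1+2N(a;R))\log(R/r),
\]
and exponentiating gives both halves of \eqref{Hcomparison} at once.

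For \eqref{localFcontrol}, the monotonicity of $N(a;\cdot)$ reduces the problem to bounding $N(a;r) = rD(a;r)/H(a;r)$. For the numerator, the inclusion $B_{R-|a|}(a) \subset B_R(0)$ and the renormalized-energy monotonicity \eqref{harmonicmapmonotonicity} give
\[
E(a;r) \leq E(a;R-|a|) \leq \left(\frac{R}{R-|a|}\right)^{d-2} E(0;R),
\]
and combining with $E(0;R) = R^{1-d} N(0;R) H(0;R)$ yields $rD(a;r) \leq C(r,R,|a|)\,N(0;R)\,H(0;R)$. For the denominator, I would lower bound $H(a;r)$ in terms of $H(0;R)$ using the inclusion $B_{r-|a|}(0) \subset B_r(a)$ and the representation $\int_{B_T(x)} |u|^2\,dx = \int_0^T H(x;s)\,ds$. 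Namely, the right-hand side at $x=a$ is controlled above by $(r/d)H(a;r)$ via monotonicity of $s^{1-d}H(a;s)$, while \eqref{Hcomparison} applied at the origin bounds the integral at $x=0$ from below in terms of $H(0;R)$; the factors of $H(0;R)$ then cancel in the ratio $rD(a;r)/H(a;r)$.

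The main obstacle is that one direct application of \eqref{Hcomparison} produces a lower bound $H(a;r) \geq c(r,R,|a|)\,((r-|a|)/R)^{2N(0;R)}\,H(0;R)$, so that the resulting estimate for $N(a;r)$ contains a factor $(R/(r-|a|))^{2N(0;R)}$ which is exponential, not linear, in $N(0;R)$. To recover the linear estimate \eqref{localFcontrol} I would iterate the previous comparison across a dyadic sequence of intermediate scales between $r-|a|$ and $R$, at each step using \eqref{Hcomparison} together with the monotonicity \eqref{almgrenmonotonicity} at suitably chosen intermediate centers and radii, so that the contribution of $N$ at each step is absorbed additively rather than multiplicatively. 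Converting the multiplicative factor $(R/(r-|a|))^{2N(0;R)}$ into an additive term of the form $C_1(r,R,|a|) N(0;R)$ is the technical heart of the estimate.
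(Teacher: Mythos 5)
Your proof of \eqref{Hcomparison} is the same as the paper's: integrate the identity \eqref{logH} for the logarithmic derivative of $r^{1-d}H(a;r)$ and invoke the monotonicity of $N(a;\cdot)$. That part is fine.

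For \eqref{localFcontrol} there is a genuine gap, and you have located it yourself: the direct estimate of the ratio $rD(a;r)/H(a;r)$ leaves the factor $\left(R/(r-|a|)\right)^{2N(0;R)}$, and the step that is supposed to remove it --- the dyadic iteration --- is not carried out and, as described, cannot work. Iterating the same comparison over roughly $\log_2\left(R/(r-|a|)\right)$ dyadic scales compounds the per-step factors multiplicatively: each step contributes a factor of the form $2^{cN}$ for some frequency $N$ bounded below by a fixed positive quantity, so the product over all steps reproduces an exponential of essentially the same size as the one you started with. Nothing in the scheme turns products into sums.

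The missing idea is to avoid estimating $D(a;r)$ and $H(a;r)$ separately altogether, and instead to bound $N(a;r)$ by the logarithm of a doubling ratio of $H(a;\cdot)$. Integrating \eqref{logH} from $r$ to an intermediate radius $R''$ gives
\[
2N(a;r)\log\left(\frac{R''}{r}\right)\ \leq\ \log\left(\frac{\fint_{\partial B_{R''}(a)}|u|^2\,\mathrm{d}A}{\fint_{\partial B_{r}(a)}|u|^2\,\mathrm{d}A}\right),
\]
which is \eqref{localFcontrolFact4}, i.e.\ the left-hand inequality of \eqref{Hcomparison} read in reverse. The sphere averages on the right are then controlled by the solid averages $\fint_{B_{R'}(a)}|u|^2$ and $\fint_{B_r(a)}|u|^2$ (estimates \eqref{localFcontrolFact2} and \eqref{localFcontrolFact3}), and the ball-inclusion comparison with balls centered at $0$ --- essentially your own computation --- bounds their ratio by $C\left(R/r'\right)^{d+2N(0;R)}$ as in \eqref{localFcontrolFact1}. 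Your exponential factor thus ends up \emph{inside the logarithm}, where it becomes exactly the additive term $C_1 N(0;R)+C_2$ of \eqref{localFcontrol}. This single application of the logarithm is the multiplicative-to-additive conversion you were trying to manufacture by iteration.
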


\begin{proof}
Using \eqref{subharmonicaverage} we compute:
\begin{equation}
\frac{\mathrm{d}}{\mathrm{d}r} \left [ \log \frac{H(a;r)}{r^{d-1}} \right ] = \frac{r^{d-1}}{H(a;r)} \left [ \frac{2D(a;r)}{r^{d-1}} \right ]
= \frac{2 N(a;r)}{r}.
\label{logH} 
\end{equation}
Integrating \eqref{logH} and using the monotonicity formula \eqref{almgrenmonotonicity} we obtain:
\begin{equation*}
\frac{H(a;R) \cdot r^{d-1}}{H(a;r) \cdot R^{d-1}} = \exp \left ( \int_r^R \frac{2 N(a;s)}{s} \, \mathrm{d}s \right ),
\end{equation*}
from which \eqref{Hcomparison} follows via the frequency monotonicity \eqref{almgrenmonotonicity}.

For the second formula we pick $R' \in (r,R)$ and $r' \in (0,r)$ such that
$B_{R'}(a) \subset B_R(0)$ and $B_{r'}(0) \subset B_r(a)$. Therefore:
\begin{equation*}
\fint_{B_{R'}(a)} |u|^2 \, \mathrm{d}x \leq  \left ( R' \right )^{-d} \int_{B_R(0)} |u|^2 \, \mathrm{d}x = \left ( R' \right )^{-d} \int_0^R H(0;s) \, \mathrm{d}s.
\end{equation*}
Then using \eqref{Hcomparison} and \eqref{almgrenmonotonicity}, we observe:
\begin{equation}
\begin{split}
\fint_{B_{R'}(a)} |u|^2 \, \mathrm{d}x
& \leq \left ( R' \right )^{-d} \int_0^R H \left ( 0; \frac{r'}{R}s \right ) \left ( \frac{R}{r'} \right )^{d-1+2N(0;R)} \, \mathrm{d}s \\
& = \left ( R' \right )^{-d} \left ( \frac{R}{r'} \right )^{d+2N(0;R)} \int_{B_{r'}(0)} |u|^2 \, \mathrm{d}x \\
& \leq \left ( \frac{r}{R'} \right )^d \left ( \frac{R}{r'} \right )^{d+2N(0;R)} \fint_{B_r(a)} |u|^2 \, \mathrm{d}x.
\end{split}
\label{localFcontrolFact1}
\end{equation}
Also note that picking $R'' \in \left ( r, R' \right )$:
\begin{equation}
\int_{B_{R'}(a)} |u|^2 \geq \int_{R''}^{R'} s^{d-1} \fint_{\partial B_s(a)} |u|^2 \, \mathrm{d}A \, \mathrm{d}s 
\geq \frac{\left ( R' \right )^d - \left ( R^{''} \right )^d}{d} \fint_{\partial B_{R^{''}}(a)} |u|^2 \, \mathrm{d}A.
\label{localFcontrolFact2} 
\end{equation}
Likewise:
\begin{equation}
\int_{B_r(a)} |u|^2 \, \mathrm{d}x = \int_0^r s^{d-1} \fint_{\partial B_s(a)} |u|^2 \, \mathrm{d}A \, \mathrm{d}s 
\leq \frac{r^d}{d} \fint_{\partial B_r(a)} |u|^2 \, \mathrm{d}A.
\label{localFcontrolFact3}
\end{equation}
Finally, integrating \eqref{logH} gives:
\begin{equation}
\log \left ( \frac{ \fint_{\partial B_{R^{''}}(a)} |u|^2 \, \mathrm{d}A }{\fint_{\partial B_r(a)} |u|^2 \, \mathrm{d}A} \right ) 
\geq 2N(a;r) \log \left ( \frac{R^{''}}{r} \right ). 
\label{localFcontrolFact4}
\end{equation}
From \eqref{localFcontrolFact1}, \eqref{localFcontrolFact2}, \eqref{localFcontrolFact3} and \eqref{localFcontrolFact4}, we conclude:
\begin{equation}
N(a; \rho) \leq N(a;r) \leq \frac{\log \left (\frac{\left ( Rr / r' \right )^d}{\left (R' \right )^d - \left ( R^{''} \right )^d} \right )}{2 \log \left ( R^{''}/r \right )}
+ \frac{\log \left ( R / r' \right )}{ \log \left ( R^{''} / r \right )} N(0;R),
\label{exactLocalFcontrol}
\end{equation}
for every $\rho \in (0, r)$.
Now we can fix $r'$, $R'$ and $R''$ with respect to $|a|$, $r$ and $R$ in terms of suitable multiplicative factors and get the constants $C_1$ and $C_2$
as in \eqref{localFcontrol}.
\end{proof}

Note that $\mathbf{D}_k$ is not compact, whereas the physically relevant range for $s$ is $\left [ - \frac{1}{2}, 1 \right ]$.
We cannot drop the assumption that $s$ is nonnegative, as relaxing this assumption leads to the so-called wall defects, cf. \cite{AmbrosioVirga91}.
However, the following maximum principle allows us to guarantee $| u | \leq 1$, and hence $s \leq 1$, as long as the boundary data satisfies
the same bound.

\begin{lemma} \label{maximumprinciple}
For any $u \, : \, \Omega \to \mathbf{D}_k$ energy minimizing with respect to the boundary data $u_0 \in H^{\frac{1}{2}}(\partial \Omega)$
satisfying $\left | u_0 \right | \leq M$ for a constant $M \geq 1$,  we have $| u | \leq M$.  
\end{lemma}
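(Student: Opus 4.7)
The plan is a standard retraction argument leveraging the cone structure of $\mathbf{D}_k$. Because $\mathbf{D}_k$ is closed under multiplication by nonnegative scalars, the nearest-point projection $\pi$ onto the closed ball of radius $M$ centered at the vertex is a $1$-Lipschitz map of $\mathbf{D}_k$ into itself: it fixes points with $|y| \leq M$ and sends $y$ with $|y| > M$ to $My/|y|$, which remains in $\mathbf{D}_k$ because $M/|y|$ is a positive scalar. Setting $\tilde u := \pi \circ u$, the hypothesis $|u_0| \leq M$ ensures that $\pi$ acts trivially on $\partial\Omega$, so $\tilde u$ shares the trace $u_0$ of $u$ and is an admissible competitor. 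Since $\pi$ is $1$-Lipschitz, $|\nabla \tilde u| \leq |\nabla u|$ a.e., and combined with the minimality of $u$ this forces $|\nabla \tilde u| = |\nabla u|$ a.e.

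On the overshoot set $\{|u| > M\}$, where $\tilde u = (M/|u|)u$, a direct computation using $u \cdot \nabla u = |u|\nabla|u|$ gives
\begin{equation*}
|\nabla \tilde u|^2 = \frac{M^2}{|u|^2}\bigl(|\nabla u|^2 - \bigl|\nabla |u|\bigr|^2\bigr).
\end{equation*}
In view of Kato's inequality $\bigl|\nabla |u|\bigr|^2 \leq |\nabla u|^2$ and the strict bound $M^2/|u|^2 < 1$ on this set, the saturation $|\nabla \tilde u|^2 = |\nabla u|^2$ is only compatible with $\nabla u = 0$ a.e.\ there. Consequently $\nabla |u|^2 = 2\, u \cdot \nabla u$ vanishes a.e.\ on $\{|u|^2 > M^2\}$, so by the chain rule for Sobolev functions the weak gradient of $(|u|^2 - M^2)_+$ is identically zero on $\Omega$. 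Since its boundary trace equals $(|u_0|^2 - M^2)_+ = 0$, the function $(|u|^2 - M^2)_+$ must vanish, proving $|u| \leq M$ almost everywhere.

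The main obstacle is the rigidity step: one must verify that equality in the Lipschitz bound forces the full gradient of $u$, and not merely its tangential component along $\mathbf{D}_k$, to vanish on $\{|u|>M\}$. The explicit formula above makes this transparent once $\tilde u$ is written as $(M/|u|)u$ on the overshoot set, with the two strict inequalities ($M/|u| < 1$ and Kato) obstructing any nonzero equality case. An alternative route would be to apply the weak maximum principle directly to the subharmonic function $|u|^2$ of Lemma \ref{subharmonicity}, testing $\Delta|u|^2 = 2|\nabla u|^2$ against $(|u|^2 - M^2)_+$; however, justifying this test function requires $|u|^2 \in H^1$ up to the boundary, which the retraction argument sidesteps by producing the bound directly from minimality.
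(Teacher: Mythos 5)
Your proof is correct and follows essentially the same route as the paper: compose $u$ with the radial (nearest-point) retraction of $\mathbf{D}_k$ onto $\mathbf{D}_{k,M}$, note it fixes the boundary data, and compare energies using minimality. In fact your treatment of the equality case is more careful than the paper's, which asserts a strict energy decrease on $\{|u|>M\}$ and does not explicitly rule out the degenerate possibility $\nabla u=0$ a.e.\ there; your explicit formula $|\nabla\tilde u|^2=\frac{M^2}{|u|^2}\left(|\nabla u|^2-\left|\nabla|u|\right|^2\right)$ together with the vanishing trace of $\left(|u|^2-M^2\right)_+$ closes that loophole cleanly.
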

\begin{proof}
We follow \cite[Lemma 4.10.2]{Jost84} closely.
For $\mathbf{D}_{k,M} = \left \{ y \in \mathbf{D}_k \, : \, |y| \leq M \right \}$, we can define the projection $\Pi_{M}: \mathbf{D}_k \to \mathbf{D}_{k,M}$,
as follows: since each $y \in \mathbf{D}_k \backslash \mathbf{D}_{k,M}$ is connected to the vertex $0$ with a unique geodesic line in the cone $\mathbf{D}_k$,
which intersects $\partial \mathbf{D}_{k,M}$ normally, we can define $\Pi_M(y)$ as this point of intersection. 
Then $\Pi_M$ is a Lipschitz retraction of $\mathbf{D}_k$ to $\mathbf{D}_{k,M}$. 
It is distance-decreasing in $\mathbf{D}_k \backslash \mathbf{D}_{k,M}$. 
Therefore, it satisfies $\left | \mathrm{d}\Pi_M (\xi) \right | < |\xi|$ for $\xi \in \mathrm{T}_x\mathbf{D}_k$, $x \in \mathbf{D}_k \backslash \mathbf{D}_{k,M}$. 
Moreover, $\Pi_M$ is equal to identity on $\mathbf{D}_{k,M}$.

Hence, $\Pi_M \left ( u_0 \right ) = u_0$, while:
\begin{equation*}
\begin{split}
\int_{\Omega} \left | \nabla \left ( \Pi_M \circ u \right ) \right |^ 2 \, \mathrm{d}x & 
=  \int_{\left \{ | u| > M \right \}} \left | \nabla \left ( \Pi_M \circ u \right ) \right |^ 2 \, \mathrm{d}x
+ \int_{\left \{ | u| \leq M \right \}} \left | \nabla \left ( \Pi_M \circ u \right ) \right |^ 2 \, \mathrm{d}x \\
& = \int_{\Omega \cap \left \{ | u| > M \right \}} \left | \mathrm{d} \Pi_M \left ( \nabla u \right ) \right |^ 2 \, \mathrm{d}x +
\int_{\Omega \cap \left \{ | u| \leq M \right \}} \left | \nabla u \right |^ 2 \, \mathrm{d}x \\
& < \int_{\Omega \cap \left \{ | u| > M \right \}} \left |  \nabla u \right |^ 2 \, \mathrm{d}x
+ \int_{\Omega \cap \left \{ | u| \leq M \right \}} \left | \nabla u \right |^ 2 \, \mathrm{d}x = \int_{\Omega} \left | \nabla u \right |^ 2 \, \mathrm{d}x, 
\end{split}
\end{equation*}
contradicting the minimality of $u$, unless $|u| \leq M$. 
\end{proof}

We state two useful consequences of $\mathbf{D}_k$ being a simply connected Lipschitz target. The proofs follow closely from \cite{HardtLin87} and \cite{HKL88}, but we include them
for completeness.

\begin{lemma} \label{InteriorEnergyEstimate}
For any compact subset $K$ of $\Omega$, there exists a constant
$D_0 = D_0 \left ( \| u \|_{L^{\infty}(\partial \Omega)}, K, \Omega \right )$ such that
\begin{equation}
\int_{K} | \nabla u |^2 \, \mathrm{d}x \leq D_0,
\label{UniformIntEnergy}
\end{equation}
for any energy-minimizing $u \, : \, \Omega \to \mathbf{D}_k$. 
\end{lemma}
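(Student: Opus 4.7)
The plan is to combine the pointwise bound $|u| \leq M := \|u\|_{L^\infty(\partial\Omega)}$ from Lemma \ref{maximumprinciple} with the distributional identity $\Delta|u|^2 = 2|\nabla u|^2$ from Lemma \ref{subharmonicity}, in the style of a Caccioppoli estimate. The maximum principle upgrades the boundary bound to the pointwise inequality $|u(x)|^2 \leq M^2$ throughout $\Omega$; after that, we never need to look at $u$ beyond its squared norm.

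Fix a cutoff $\psi \in C_c^\infty(\Omega)$ with $0 \leq \psi \leq 1$ and $\psi \equiv 1$ on $K$; by mollifying the indicator function of a suitable open neighborhood of $K$ compactly contained in $\Omega$, we may arrange
\begin{equation*}
C_1 := \int_\Omega |\Delta\psi|\,\mathrm{d}x < \infty,
\end{equation*}
with $C_1$ depending only on $K$ and $\Omega$. Testing the distributional equation from Lemma \ref{subharmonicity} against $\psi$ gives
\begin{equation*}
2 \int_\Omega \psi\, |\nabla u|^2\, \mathrm{d}x \;=\; \int_\Omega |u|^2\, \Delta\psi\, \mathrm{d}x.
\end{equation*}

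To bound the right-hand side, write $\Delta\psi = (\Delta\psi)^+ - (\Delta\psi)^-$ with $(\Delta\psi)^\pm \geq 0$. Since $0 \leq |u|^2 \leq M^2$ pointwise, we have $|u|^2\Delta\psi \leq M^2(\Delta\psi)^+$. The compact support of $\psi$ ensures $\int_\Omega \Delta\psi\,\mathrm{d}x = 0$, so $\int_\Omega (\Delta\psi)^+\,\mathrm{d}x = \tfrac{1}{2}\int_\Omega |\Delta\psi|\,\mathrm{d}x$. Combining these ingredients yields
\begin{equation*}
\int_K |\nabla u|^2\,\mathrm{d}x \;\leq\; \int_\Omega \psi\,|\nabla u|^2\,\mathrm{d}x \;\leq\; \frac{M^2}{4}\int_\Omega |\Delta\psi|\,\mathrm{d}x \;=\; \frac{M^2 C_1}{4},
\end{equation*}
so we may take $D_0 = \tfrac{M^2 C_1}{4}$, a constant depending only on $\|u\|_{L^\infty(\partial\Omega)}$, $K$, and $\Omega$.

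There is no substantial obstacle here: the proof is a direct consequence of the two cited lemmas plus one integration by parts. The only noteworthy point is that no $H^{1/2}$ information about the boundary data enters the estimate -- the pointwise subharmonicity of $|u|^2$, together with the $L^\infty$ bound coming from the cone retraction in Lemma \ref{maximumprinciple}, is what lets the constant $D_0$ depend solely on $M$ and the geometric data.
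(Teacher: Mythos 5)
Your proof is correct, and it takes a genuinely different and more elementary route than the paper's. The paper constructs a comparison map via a Lipschitz retraction of a neighborhood of $\overline{B_R(0)}\cap\mathbf{D}_k$ (following \cite[Lemma 6.1, Theorem 6.2]{HardtLin87}), obtains the extension estimate \eqref{extensionEstimate1}, and then runs a differential inequality for $D(0;\rho)$ which it integrates over $\rho\in(1-\delta,1)$. You instead exploit the fact that the target is a cone: the radial dilations $u_t=(1+t\phi)u$ are admissible, so one gets the exact identity $\Delta|u|^2=2|\nabla u|^2$ of Lemma \ref{subharmonicity}, and testing it against a fixed cutoff converts the interior energy into $\tfrac12\int|u|^2\Delta\psi$, which the maximum principle bounds by $\tfrac{M^2}{4}\int|\Delta\psi|$. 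All the intermediate steps check out: the sign in the integration by parts, the pointwise bound $|u|^2\Delta\psi\le M^2(\Delta\psi)^+$ (using $|u|^2\ge 0$ on the set where $\Delta\psi<0$), and $\int(\Delta\psi)^+=\tfrac12\int|\Delta\psi|$ from $\int\Delta\psi=0$. Two cosmetic points: Lemma \ref{maximumprinciple} is stated for $M\ge 1$, so you should take $M=\max\left(1,\|u\|_{L^\infty(\partial\Omega)}\right)$; and your argument uses the cone structure of $\mathbf{D}_k$ in an essential way, whereas the paper's comparison-map machinery works for general simply connected Lipschitz targets and is in any case needed again for Lemmas \ref{caccioppolitype} and \ref{compactness} --- so the paper loses nothing by proving the energy bound as a byproduct of that construction, while your argument is the shorter self-contained proof of this particular lemma.
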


\begin{proof}
By the compactness of $K$, scaling and translation, it suffices to prove the estimate for $\Omega = B_1(0)$ and $K = \overline{B_{1-\epsilon}(0)}$ for a fixed $\delta \in (0,1)$.

By Lemma \ref{maximumprinciple}, the image of $u$ is contained in $B_R(0) \cap \mathbf{D}_k$ for some $R > 0$.
Although $B_R(0) \cap \mathbf{D}_k$ is not a Lipschitz submanifold near the origin, we may apply the argument of \cite[Lemma 6.1]{HardtLin87}.
There is an (easily constructed) bilipschitz triangulation of the closed cube $Q$ in $\mathbf{R}^5$ so that the image $D$ of $\overline{B_R(0)} \cap \mathbf{D}_k$ is a (3 dimensional)
subcomplex. As in the proof of \cite[Lemma 6.1]{HardtLin87}, the fact that $D$ (being topologically a truncated cone) is simply-connected, gives us a {\it one} dimensional Lipschitz
polygon $X \subset Q$ and a suitable locally Lipschitz retraction $P$ of $Q \setminus X$ onto $D$.
This projection map allows us to construct comparison maps to derive estimates for the energy-minimizing map $u$ 
from corresponding estimates on the harmonic extension of $u|_{\partial B_s(a)}$ to $B_s(a)$.

Arguing as in the proof of \cite[Theorem 6.2]{HardtLin87}, while replacing the harmonic estimate in \cite[Lemma 2.1]{HardtLin87} with the one corresponding 
to the estimate below, we conclude that there exists $\omega \in H^1 \left ( B_s(a), \mathbf{D}_k \right )$, an extension of $u|_{\partial B_s(a)}$ to $B_s(a)$ satisfying:
\begin{equation} \label{extensionEstimate1}
\int_{B_s(a)} | \nabla \omega |^2 \, \mathrm{d}x \leq C \left ( \int_{\partial B_s(a)} \left | \nabla_{\mathrm{tan}} u \right |^2 \, \mathrm{d}A 
\cdot \int_{\partial B_s(a)}  |u - \xi |^2 \, \mathrm{d}A \right )^{1/2},  
\end{equation}
for any $\xi \in \mathbf{R}^m$, $B_s(a) \subset \Omega$ and $C = C \left ( \| u \|_{L^{\infty}(\partial \Omega)}, k \right )$, a constant. 

Also note that $D(a;\rho)$ is monotone increasing with
\begin{equation*}
\frac{\mathrm{d}}{\mathrm{d}\rho} D(0;\rho) = \int_{\partial B_{\rho}(0)} | \nabla u |^2 \, \mathrm{d}A,
\end{equation*}
for almost every $\rho \in (0,1)$. Hence, using \eqref{extensionEstimate1} with $\xi = 0$, by the minimality of $u$, we obtain:
\begin{equation*}
\begin{split}
D(0;\rho) \leq \int_{B_\rho(0)} | \nabla \omega |^2 \, \mathrm{d}x 
& \leq C \left (  \frac{\mathrm{d}}{\mathrm{d}\rho} D(0;\rho)
\cdot \int_{\partial B_s(a)}  |u|^2 \, \mathrm{d}A \right )^{1/2} \\
& \leq CM \rho \left [ \frac{\mathrm{d}}{\mathrm{d}\rho} D(0;\rho) \right ]^{1/2},
\end{split}
\end{equation*}
where $M = M \left ( \| u \|_{L^{\infty}(\partial \Omega)} \right )$, and the last inequality is due to Lemma \ref{maximumprinciple} and the H\"older inequality.
Rearranging this estimate as
\begin{equation*}
\frac{(CM)^{-2}}{\rho^2} \leq \frac{\frac{\mathrm{d}}{\mathrm{d}\rho} D(0;\rho)}{D(0;\rho)^2},
\end{equation*}
integrating this inequality from $1-\delta$ to $1$, and dropping the positive term $D(0,1)$, we obtain:
\begin{equation*}
D(a;1-\delta) \leq (CM)^2 \frac{1-\delta}{\delta}.
\end{equation*}
\end{proof}

\begin{lemma} \label{caccioppolitype}
For any energy minimizing map $u \, : \, B_R(0) \to \mathbf{D}_k$, with $\| u \|_{L^\infty} < \infty$,  the following Caccioppoli-type inequality holds for every $\lambda \in (0,1)$:
\begin{equation}
E(a; r) \leq \lambda E(a; 2r) + \frac{C}{\lambda^2} W(a; 2r),
\label{caccioppoli} 
\end{equation}
where $W(a; \rho) = \fint_{B_{\rho}(a)} \left | u - \bar{u} \right |^2 \, \mathrm{d}x$, 
$\bar{u} = \fint_{B_{\rho}(a)} u \, \mathrm{d}x$, and $C = C (k)$.
\end{lemma}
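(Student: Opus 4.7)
The plan is to prove the stronger estimate $E(a;r) \leq C\sqrt{E(a;2r)\, W(a;2r)}$ and then deduce the claimed inequality by Young's inequality. The key input is the extension estimate \eqref{extensionEstimate1} already established in the proof of Lemma \ref{InteriorEnergyEstimate}, which rests on the Lipschitz retraction $P$ onto the simply-connected cone $\mathbf{D}_k$ from \cite{HardtLin87}.

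First, I would use Fubini's theorem to choose a good intermediate radius. Writing $\bar{u} = \fint_{B_{2r}(a)} u \, \mathrm{d}x$ and observing
\begin{equation*}
\int_r^{2r} \int_{\partial B_s(a)} |\nabla u|^2 \, \mathrm{d}A \, \mathrm{d}s = \int_{B_{2r}(a) \setminus B_r(a)} |\nabla u|^2 \, \mathrm{d}x,
\end{equation*}
together with the analogous identity for $|u - \bar{u}|^2$, one selects $s^* \in (r,2r)$ at which both $\int_{\partial B_{s^*}} |\nabla_{\mathrm{tan}} u|^2 \, \mathrm{d}A \leq \frac{4}{r} \int_{B_{2r}} |\nabla u|^2 \, \mathrm{d}x$ and $\int_{\partial B_{s^*}} |u - \bar{u}|^2 \, \mathrm{d}A \leq \frac{4}{r} \int_{B_{2r}} |u - \bar{u}|^2 \, \mathrm{d}x$ hold simultaneously; this is possible since the sets of $s$ where either averaged bound fails have measure less than $r/2$, so their intersection is nonempty. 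Next, applying \eqref{extensionEstimate1} with $\xi = \bar{u}$ produces $\omega \in H^1(B_{s^*}(a), \mathbf{D}_k)$ extending $u|_{\partial B_{s^*}}$ with
\begin{equation*}
\int_{B_{s^*}} |\nabla \omega|^2 \, \mathrm{d}x \leq C \left( \int_{\partial B_{s^*}} |\nabla_{\mathrm{tan}} u|^2 \, \mathrm{d}A \cdot \int_{\partial B_{s^*}} |u - \bar{u}|^2 \, \mathrm{d}A \right)^{1/2}.
\end{equation*}

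The minimality of $u$ together with the inclusion $B_r \subset B_{s^*}$ gives $\int_{B_r} |\nabla u|^2 \leq \int_{B_{s^*}} |\nabla u|^2 \leq \int_{B_{s^*}} |\nabla \omega|^2$, and inserting the Fubini bounds from the previous step yields $\int_{B_r(a)} |\nabla u|^2 \leq C r^{d-2} \sqrt{E(a;2r) \, W(a;2r)}$ after absorbing the volume factors produced by comparing $\int_{B_{2r}}|u - \bar u|^2$ to $W(a;2r)$. Dividing by $r^{d-2}$ gives $E(a;r) \leq C \sqrt{E(a;2r) \, W(a;2r)}$. Applying Young's inequality $\sqrt{xy} \leq \mu x + (4\mu)^{-1} y$ with $\mu = \lambda/C$, and using $1/\lambda \leq 1/\lambda^2$ for $\lambda \in (0,1)$, then delivers the stated bound with $C = C(k)$.

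The only point requiring care beyond straightforward dimensional bookkeeping (in converting $\int |\nabla u|^2$ and $\int |u - \bar u|^2$ on balls to the normalized quantities $E$ and $W$) is the simultaneous Fubini selection of $s^*$. The geometric heart of the argument, namely the construction of $P$ and the derivation of \eqref{extensionEstimate1} from \cite[Theorem 6.2]{HardtLin87} exploiting that $\mathbf{D}_k$ is a simply-connected Lipschitz cone, has already been carried out inside the proof of Lemma \ref{InteriorEnergyEstimate} and need not be repeated here.
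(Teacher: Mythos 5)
Your proof is correct and follows essentially the same route as the paper: a Fubini selection of a good radius in $(r,2r)$, the extension of \cite[Theorem 6.2]{HardtLin87} built from the Lipschitz retraction onto the simply-connected cone $\mathbf{D}_k$, and comparison by minimality; the only cosmetic difference is that you use the multiplicative form \eqref{extensionEstimate1} with $\xi=\bar u$ and apply Young's inequality at the end, whereas the paper invokes the additive $\bigl(\mu,\,\mu^{-2}\bigr)$ form of the extension estimate directly. One small point to add: the constant in \eqref{extensionEstimate1} depends on $\|u\|_{L^\infty}$ as well as on $k$, so to conclude $C=C(k)$ you need the paper's preliminary normalization $\|u\|_{L^\infty}=1$, which is legitimate because every term in \eqref{caccioppoli} is quadratic in $u$.
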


\begin{proof}
We may assume that $M \equiv \| u \|_{L^\infty} = 1$.
In fact the case $M = 0$ is trivial, and for $0 < M < \infty$, first verify \eqref{caccioppoli} with $u$ replaced by $M^{-1} u$, and then multiply by $M^2$ to obtain the desired inequality
with the same constant $C$.

As \cite[Theorem 6.2]{HardtLin87} applies by the consideration in Lemma \ref{InteriorEnergyEstimate},
there exists $\omega \in H^1 \left ( B_s(a), \mathbf{D}_k \right )$, an extension of $u|_{\partial B_s(a)}$ to $B_s(a)$ satisfying
the estimate:
\begin{equation*}
\frac{1}{s} \int_{B_s(a)} \left | \nabla \omega \right |^2 \, \mathrm{d}x 
\leq \mu \int_{\partial B_s(a)} \left | \nabla_{\mathrm{tan}} u \right |^2 \, \mathrm{d}A
+ \frac{c}{\mu^2 s^2} \int_{\partial B_s(a)} \left | u - \bar{u} \right |^2 \, \mathrm{d}A,
\end{equation*}
for some $c = c( \left ( k, M \right ) > 0$ and any $\mu \in (0,1)$. 
Now by Fubini's theorem, for some $s \in \left (r, 2r \right )$, we have:
\begin{equation*}
\begin{split}
\int_{\partial B_s(a)} \left | \nabla_{\mathrm{tan}} u \right |^2 \, \mathrm{d}A \leq \frac{2}{r} \int_{B_{2r}(a)} | \nabla u |^2, \\
\int_{\partial B_s(a)} \left | u - \bar{u} \right |^2 \, \mathrm{d}A \leq \frac{1}{r} \int_{B_{2r}(a)} \left | u - \bar{u} \right |^2 \, \mathrm{d}x.
\end{split}
\end{equation*}
Then we obtain:
\begin{equation*}
\begin{split}
\frac{1}{r} \int_{B_r(a)} \left | \nabla u \right |^2 \, \mathrm{d}x
& \leq \frac{1}{s} \int_{B_{s}(a)} \left | \nabla u \right |^2 \, \mathrm{d}x \\
& \leq \frac{1}{s} \int_{B_{s}(a)} \left | \nabla \omega \right |^2 \, \mathrm{d}x \\
& \leq \frac{2 \mu}{2r} \int_{ B_{2r}(a)} \left | \nabla u \right |^2 \, \mathrm{d}x
+ \frac{8c}{\mu^2 (2r)^3} \int_{ B_{2r}(a)} \left | u - \bar{u} \right |^2 \, \mathrm{d}x,
\end{split}
\end{equation*}
where the first inequality is due to the monotonicity \eqref{harmonicmapmonotonicity}, the second inequality is due to the minimality of $u$, and the third
inequality is from the above estimate satisfied by the extension $\omega$ and $s^2 > r^2$. 
Choosing $\mu = \lambda/2$ and $C=32c$ yields the claim.
\end{proof}

Now we are ready to discuss the compactness and regularity of energy minimizing maps $u: \Omega \to \mathbf{D}_k$.

\begin{lemma} \label{compactness}
Let $u_i$ be a sequence of energy minimizing maps from $B_1(0)$ to $\mathbf{D}_k$, such that
$u_i \in L^{\infty} \left ( \partial B_1(0) \right )$  and
$u_i$ converges to $u$ weakly in $H^1 \left ( B_1(0), \mathbf{D}_k \right )$.
Then $u$ is an energy minimizing map on $B_1(0)$,
and $u_i$ converges to $u$ strongly in $H^1 \left ( B_1(0), \mathbf{D}_k \right )$.
\end{lemma}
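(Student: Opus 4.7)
The plan is to run the standard Luckhaus--Hardt--Lin comparison argument, using the Lipschitz retraction onto the simply-connected cone $\mathbf{D}_k$ already invoked in the proof of Lemma \ref{InteriorEnergyEstimate}. Since $u_i \rightharpoonup u$ in $H^1(B_1(0), \mathbf{D}_k)$, we have by Rellich--Kondrachov that $u_i \to u$ strongly in $L^2(B_1(0))$, and after passing to a subsequence, the sequence $\int_{B_1(0)} |\nabla u_i|^2 \, \mathrm{d}x$ is uniformly bounded. By Fubini, we may therefore select a comeager set of radii $s \in (0,1)$ (depending on the chosen center $a$) on which $\sup_i \int_{\partial B_s(a)} |\nabla_{\tan} u_i|^2 \, \mathrm{d}A < \infty$ and $u_i|_{\partial B_s(a)} \to u|_{\partial B_s(a)}$ in $L^2(\partial B_s(a))$.

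To prove minimality of $u$, fix $B_s(a) \subset B_1(0)$ and an arbitrary comparison $v \in H^1(B_s(a), \mathbf{D}_k)$ with $v = u$ on $\partial B_s(a)$. Choose $s' > s$ slightly larger, with both $s$ and $s'$ good radii in the sense above. On $B_s(a)$ set $w_i = v$; on the thin annulus $B_{s'}(a) \setminus B_s(a)$ construct an interpolation $w_i$ from $u|_{\partial B_s(a)}$ to $u_i|_{\partial B_{s'}(a)}$ by first interpolating linearly in $\mathbf{R}^4$ and then applying the Lipschitz retraction $P$ onto $\mathbf{D}_k$ from \cite[Lemma 6.1]{HardtLin87} (composed with the bilipschitz triangulation as recalled in Lemma \ref{InteriorEnergyEstimate}); this works because $\mathbf{D}_k$ is simply connected, so for $i$ large the one-dimensional polygonal exceptional set $X$ can be avoided by a small translation in the ambient cube. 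The standard Luckhaus-type estimate then gives
\begin{equation*}
\int_{B_{s'}(a) \setminus B_s(a)} |\nabla w_i|^2 \, \mathrm{d}x \leq C(s'-s)\!\!\int_{\partial B_s(a) \cup \partial B_{s'}(a)} |\nabla_{\tan} u_i|^2 + |\nabla_{\tan} u|^2 \, \mathrm{d}A + \frac{C}{s'-s}\!\!\int_{\partial B_{s'}(a)} |u_i - u|^2 \, \mathrm{d}A,
\end{equation*}
so $\int_{B_{s'} \setminus B_s} |\nabla w_i|^2 \, \mathrm{d}x = O(s'-s) + o_i(1)$. By minimality of $u_i$, $\int_{B_{s'}(a)} |\nabla u_i|^2 \leq \int_{B_s(a)} |\nabla v|^2 + O(s'-s) + o_i(1)$; sending $i \to \infty$ (weak lower semicontinuity) and then $s' \downarrow s$ yields $\int_{B_s(a)} |\nabla u|^2 \leq \int_{B_s(a)} |\nabla v|^2$, so $u$ is energy minimizing.

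For strong convergence, we repeat the construction with the particular choice $v = u|_{B_s(a)}$. The same chain of inequalities gives $\int_{B_s(a)} |\nabla u_i|^2 \leq \int_{B_{s'}(a)} |\nabla u_i|^2 \leq \int_{B_s(a)} |\nabla u|^2 + O(s'-s) + o_i(1)$, hence $\limsup_i \int_{B_s(a)} |\nabla u_i|^2 \leq \int_{B_s(a)} |\nabla u|^2$ after $s' \downarrow s$. Combined with weak lower semicontinuity we get convergence of the Dirichlet energies on each compactly contained ball, which together with $u_i \rightharpoonup u$ in $H^1$ upgrades the convergence to strong in $H^1_{\mathrm{loc}}(B_1(0))$; a standard exhaustion and Fubini argument at $s \uparrow 1$ along good radii gives the claimed strong convergence on all of $B_1(0)$.

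The main technical obstacle is step two: producing the comparison $w_i$ with values in the non-smooth target $\mathbf{D}_k$ and an energy estimate on the annulus that tends to zero. This is where the simple connectivity of $\mathbf{D}_k$ enters crucially, via the Lipschitz retraction $P$ of \cite[Lemma 6.1]{HardtLin87}; without it the linear interpolation in $\mathbf{R}^4$ could not be projected back to $\mathbf{D}_k$ with controlled Dirichlet energy.
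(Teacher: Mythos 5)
Your argument is a legitimate alternative route: you run a Luckhaus-type good-slice comparison on interior annuli, whereas the paper first upgrades $\nabla u_i$ to be equibounded in $L^q$, $q>2$, via the Caccioppoli inequality of Lemma \ref{caccioppolitype} and Gehring's lemma, and then interpolates between $u_j$ and the competitor on a single thin annulus $A_\delta = B_1(0)\setminus B_{1-\delta}(0)$ abutting the outer boundary, controlling $\int_{A_\delta}|\nabla u_j|^2\le b_1|A_\delta|^{1-2/q}$ uniformly in $j$. Both constructions use the same Lipschitz retraction onto the simply connected cone from \cite[Lemma 6.1]{HardtLin87}, so the target-specific difficulty is handled identically. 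Two points in your slicing step need repair, though. First, the set of radii $s$ with $\sup_i\int_{\partial B_s}|\nabla_{\tan}u_i|^2<\infty$ need not be large in any useful sense (thin spikes of height $\sim 1/(s'-s)$ can sweep out every radius infinitely often while keeping $\int_{B_1}|\nabla u_i|^2$ bounded); the standard fix is Fatou plus a further subsequence at a fixed radius, or a good radius chosen per $i$. Second, your ``$O(s'-s)$'' hides the fact that the good-slice boundary energy is only bounded by $C/(s'-s)$ times the annulus energy, so the interpolation cost does not tend to zero with $s'-s$ alone; one must balance the interpolation width against $\int_{\partial B_{s'}}|u_i-u|^2\to 0$ to make the annulus term $o_i(1)$ for each fixed slice. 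These are standard and fixable.

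The genuine gap is the last step. Your method yields minimality of $u$ on balls $B_s(a)\subset\subset B_1(0)$ and strong convergence in $H^1_{loc}(B_1(0))$, but the claimed ``exhaustion at $s\uparrow 1$ along good radii'' does not upgrade this to strong convergence in $H^1(B_1(0))$: nothing in your construction rules out concentration of $\int_{B_1(0)\setminus B_s(0)}|\nabla u_i|^2$ near $\partial B_1(0)$ uniformly in $i$, precisely because the good-slice bound near the boundary degenerates like $\Lambda/(1-s)$ and your comparison maps never reach $\partial B_1(0)$. This is exactly what the paper's reverse H\"older step buys: the uniform $L^q$ bound with $q>2$ gives $\int_{A_\delta}|\nabla u_i|^2\le b_1|A_\delta|^{1-2/q}$, a quantitative no-concentration estimate at the outer boundary, which is then fed into the comparison $v_j=(1-\eta)u_j+\eta w$ on $A_\delta$. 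Without some substitute for that estimate (or a weakening of the conclusion to $H^1_{loc}$, which is in fact how the lemma is invoked in parts of the paper), your proof of the full statement is incomplete.
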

\begin{proof}
We follow the argument of \cite[Proposition 5.1]{HKL88} closely.

Applying the Sobolev inequality to $W(2r)$ with $p = 6/5$, dividing each term by $r^2$, and applying \eqref{caccioppoli}, we obtain:
\begin{equation*}
\fint_{B_r(a)} \left | \nabla u_i \right |^2 \, \mathrm{d}x \leq \theta \fint_{B_{2r}(a)} \left | \nabla u_i \right |^2 \, \mathrm{d}x 
+ \frac{\tilde{C}}{\theta^2} \left ( \fint_{B_{2r}(a)} \left | \nabla u_i \right |^p \, \mathrm{d}x \right )^{2/p},
\end{equation*}
where $\theta \in (0,1)$, $p = 6/5 < 2$ and $\tilde{C} = \tilde{C}(k)$.
This is a reverse H\"older inequality with a small perturbation term. By \cite[Chapter V, Proposition 1.1]{Giaquinta83}, we conclude that  
that $\nabla u_i$ are equibounded in $L^q_{loc}$ for some $q > 2$. 

Let $w \in H^1 \left ( B_1(0), \mathbf{D}_k \right )$ be an arbitrary map with the boundary value $w = u$ on $\partial B_1(0)$.
Given $\delta > 0$, we choose a smooth cut-off function $\eta$ such that $\eta \equiv 1$ on $B_{1-\delta}(0)$, $\eta = 0$ on $\partial B_1(0)$, and
$| \nabla \eta | \leq \delta^{-1}$ on the annulus $A_{\delta} = B_1(0) \backslash B_{1-\delta}(0)$. First, we interpolate linearly and set:
\begin{equation}
 v_j = (1-\eta) u_j + \eta w \quad \mathrm{in} \; B_1(0). \label{interpolant}
\end{equation}
Note that $v_j$ agrees with $w$ on $\partial B_{1-\delta}(0)$ and with $u_j$ on $\partial B_1(0)$. However, it possibly doesn't have its image completely in the target $\mathbf{D}_k$.
But since $\mathbf{D}_k$ is simply connected, using the Lipschitz retraction constructed in Lemma \ref{InteriorEnergyEstimate} above 
and arguing as in the proof of \cite[Theorem 6.2]{HardtLin87},
we obtain a map $w_j \in H^1 \left ( A_\delta , \mathbf{D}_k \right )$ such that it satisfies the following:
\begin{equation}
\begin{split}
\int_{A_\delta} \left | \nabla w_j \right |^2 \, \mathrm{d}x \leq C \int_{A_\delta} \left | \nabla v_j \right |^2 \, \mathrm{d}x, \\
w_j = v_j = w \quad \mathrm{on} \; \partial B_{1-\delta}(0), \\
w_j = v_j = u_j \quad \mathrm{on} \; \partial B_1(0).
\end{split} \label{aExtension}
\end{equation}
Since we do not assume a uniform bound on $\left \| u_j \right \|_{L^\infty \left ( \partial B_1(0) \right ) }$, in fact we construct a Lipshitz retraction $P_j$ for each $u_j$.
Nevertheless, by the homogeneity of $\mathbf{D}_k$, we can ensure that the Lipschitz norms of $P_j$ are uniformly bounded. Hence, we can choose the constant $C = C(k)$ to be uniform in $j$.
Finally, we extend $w_j$ from $A_{\delta}$ to $B_1(0)$ by setting $w_j = w$ on $B_{1-\delta}(0)$. 

Given $\epsilon >0$, for $j$ large enough we have:
\begin{equation}
\int_{B_1(0)} | \nabla u |^2 \, \mathrm{d}x - \frac{\epsilon}{2} \leq  \int_{B_1(0)} \left | \nabla u_j \right |^2 \, \mathrm{d}x 
\leq \int_{B_1(0)} \left | \nabla w_j \right |^2 \, \mathrm{d}x, \label{comparison1}
\end{equation}
by the lower semicontinuity of the Dirichlet energy and the minimality of $u_j$. We estimate the right hand-side using \eqref{aExtension}:
\begin{equation}
\int_{B_1(0)} \left | \nabla w_j \right |^2 \, \mathrm{d}x \leq \int_{B_{1-\delta}(0)} | \nabla w |^2 \, \mathrm{d}x 
+ C \int_{A_\delta} \left | \nabla v_j \right |^2 \, \mathrm{d}x. \label{RHS1}
\end{equation}
Using \eqref{interpolant} and applying the Cauchy-Schwarz-Young inequality to the last term repeatedly, we obtain:
\begin{equation}
\int_{A_\delta} \left | \nabla v_j \right |^2 \, \mathrm{d}x \leq 
c_1 \int_{A_\delta} \left | \nabla u_j \right |^2 \, \mathrm{d}x +
c_2  \int_{A_\delta} \left | \nabla w \right |^2 \, \mathrm{d}x + 
c_3 \delta^{-2} \int_{A_\delta} \left | u_j - w \right |^2 \, \mathrm{d}x. \label{RHS2}
\end{equation}
We apply the H\"older inequality to the first term, and use the equiboundedness of $\nabla u_j$ in $L^q$ for some $q>2$ to get:
\begin{equation*}
\int_{A_\delta} \left | \nabla u_j \right |^2 \, \mathrm{d}x \leq b_1 \left | A_{\delta} \right |^{1-2/q}. 
\end{equation*}
To the last term in \eqref{RHS2}, we apply the triangle and Young inequalities to get:
\begin{equation*}
\delta^{-2} \int_{A_\delta} \left | u_j - w \right |^2 \, \mathrm{d}x \leq
2 \delta^{-2} \int_{A_\delta} \left | u_j - u \right |^2 \, \mathrm{d}x + 2 \delta^{-2} \int_{A_\delta} \left | u - w \right |^2 \, \mathrm{d}x.
\end{equation*}
Then noting $u - w = 0$ on $\partial B_1(0)$, to the second term we apply the variant of Poincar\'e inequality for functions that vanish on a subset of the boundary with
a certain measure. Picking up a constant factor $b_2 \delta^2$ from the Poincar\'e inequality, we get:
\begin{equation*}
\begin{split}
\int_{A_\delta} \delta^{-2} \left | u_j - w \right |^2 \, \mathrm{d}x & \leq 
2 \delta^{-2} \int_{A_\delta} \left | u_j - u \right |^2 \, \mathrm{d}x +
2 \delta^{-2} b_2 \delta^2  \int_{A_\delta} \left | \nabla (u-w) \right |^2 \, \mathrm{d}x \\
& \leq 2 \delta^{-2} \int_{A_\delta} \left | u_j - u \right |^2 \, \mathrm{d}x + b_3  \int_{A_\delta} \left | \nabla (u-w) \right |^2 \, \mathrm{d}x.
\end{split}
\end{equation*}
Thus, we obtain:
\begin{equation*}
\begin{split}
\int_{A_\delta} \left | \nabla v_j \right |^2 \, \mathrm{d}x \leq
& \tilde{C_1} \left | A_{\delta} \right |^{1-2/q} +
\tilde{C_2} \delta^{-2} \int_{A_\delta} \left | u_j - u \right |^2 \, \mathrm{d}x \; + \\
& \tilde{C_3} \int_{A_\delta} \left | \nabla w \right |^2 \, \mathrm{d}x + 
\tilde{C_4} \int_{A_\delta} \left | \nabla (u-w) \right |^2 \, \mathrm{d}x.
\end{split}
\end{equation*}
Firstly, letting $j \to \infty$, as $u_j$ converge to $u$ in $L^2 \left ( B_1(0) \right )$ strongly, the second term above converges to zero.
Next, choosing $\delta > 0$ small enough and using the absolute continuity of the integrals in terms three and four above, we obtain:
\begin{equation}
\lim \sup_{j \to \infty} \int_{A_\delta} \left | \nabla v_j \right |^2 \, \mathrm{d}x \leq \frac{\epsilon}{2C}. \label{interpolantEst}
\end{equation}
Together with \eqref{comparison1} and \eqref{RHS1}, this estimate gives:
\begin{equation*}
\int_{B_1(0)} | \nabla u |^2 \, \mathrm{d}x \leq \int_{B_1(0)} | \nabla w |^2 \, \mathrm{d}x + \frac{\epsilon}{2} + \frac{\epsilon}{2} 
= \int_{B_1(0)} | \nabla w |^2 \, \mathrm{d}x + \epsilon.
\end{equation*}
Since the map $w$ and $\epsilon > 0$ are arbitrary, we conclude that $u$ is a minimizer.

In order to observe the strong convergence, we expand:
\begin{equation*}
\int_{B_1(0)} \left | \nabla \left ( u_j - u \right ) \right |^2 \mathrm{d}x =
\int_{B_1(0)} \left | \nabla u_j \right |^2 \mathrm{d}x +
\int_{B_1(0)} \left | \nabla u \right |^2 \mathrm{d}x -
2 \int_{B_1(0)} \left \langle \nabla u_j , \nabla u \right \rangle \mathrm{d}x,
\end{equation*}
and note that for any given $\epsilon > 0$, by the $H^1$-weak convergence $u_j$, there exists a large enough $j$ such that:
\begin{equation*}
\begin{split}
\int_{B_1(0)} \left | \nabla \left ( u_j - u \right ) \right |^2 \mathrm{d}x & \leq
\int_{B_1(0)} \left | \nabla u_j \right |^2 \mathrm{d}x +
\int_{B_1(0)} \left | \nabla u \right |^2 \mathrm{d}x -
2 \int_{B_1(0)} \left | \nabla u \right |^2 \mathrm{d}x + \frac{\epsilon}{2} \\
& = \left [ \int_{B_1(0)} \left | \nabla u_j \right |^2 \, \mathrm{d}x - \int_{B_1(0)} \left | \nabla u \right |^2 \, \mathrm{d}x \right ] + \frac{\epsilon}{2}.
\end{split}
\end{equation*}
On the other hand, taking $w = u$ in \eqref{interpolant}, by the minimality of $u_j$, we have:
\begin{equation*}
\int_{B_1(0)} \left | \nabla u_j \right |^2 \, \mathrm{d}x \leq \int_{B_1(0)} \left | \nabla w_j \right |^2 \, \mathrm{d}x 
\leq \int_{B_1(0)} \left | \nabla u \right |^2 \, \mathrm{d}x + C \int_{A_\delta} \left | \nabla v_j \right |^2 \, \mathrm{d}x.
\end{equation*}
Hence, by \eqref{interpolantEst} we conclude:
\begin{equation*}
\lim \sup_{j \to \infty} \int_{B_1(0)} \left | \nabla \left ( u_j - u \right ) \right |^2 \, \mathrm{d}x \leq \epsilon,
\end{equation*}
that is we have strong convergence in $H^1 \left ( B_1 (0) \right )$.

\end{proof}

\begin{lemma} \label{holderestimate}
Let $u \, : \, \Omega \to \mathbf{D}_k$ be a minimizing map. 
Then there exists an $\alpha = \alpha \left ( k, \| u \|_{L^2 \left ( {\Omega} \right )} \right )$ such that for any compact $K \subset \Omega$,
the following estimate holds:
\begin{equation}
\| u \|_{C^{0, \alpha}(K)} \leq C(K, \Omega, k) \| u \|_{L^2 \left ( {\Omega} \right )}.
\label{uniformholder}
\end{equation}
\end{lemma}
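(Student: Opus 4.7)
The plan is to combine a uniform $L^\infty$-bound, coming from the subharmonicity of $|u|^2$, with a Morrey-type decay for the renormalized Dirichlet energy, and conclude via Campanato's characterization of H\"older spaces. For the $L^\infty$-bound, since $|u|^2$ is subharmonic by Lemma \ref{subharmonicity}, the mean value inequality yields $\|u\|_{L^\infty(K')}^2 \leq C(K', \Omega) \|u\|_{L^2(\Omega)}^2$ for any $K \subset K' \subset\subset \Omega$; combined with Lemma \ref{InteriorEnergyEstimate}, this produces a uniform interior Dirichlet energy bound $\int_{K'} |\nabla u|^2 \,\mathrm{d}x \leq D_0$ depending only on $\|u\|_{L^2(\Omega)}$, $K$, $\Omega$, $k$.

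Next I would establish a Morrey-type decay
\[ E(a;r) \leq C \,(r/R)^{2\alpha} E(a;R) \]
for some $\alpha = \alpha(k, \|u\|_{L^2(\Omega)}) > 0$ and all $a \in K$, $0 < r < R \leq \mathrm{dist}(K, \partial \Omega)/2$. The argument is by contradiction and rescaling. If no such positive $\alpha$ worked, there would exist minimizers $u_j$, centers $a_j \in K$, scales $R_j$, and fractions $\rho_j \downarrow 0$ for which the inequality is violated with $\alpha_j \downarrow 0$. Rescaling via $v_j(y) = u_j(a_j + R_j y)$ and normalizing $E_{v_j}(0;1) = 1$, the uniform bounds above together with the compactness Lemma \ref{compactness} yield a subsequential strong $H^1$-limit $v_\infty$ that is itself a minimizer on $B_1(0)$. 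The rigidity of the renormalized-energy monotonicity \eqref{harmonicmapmonotonicity} together with the identity $\Delta|v_\infty|^2 = 2|\nabla v_\infty|^2$ from Lemma \ref{subharmonicity} precludes $v_\infty$ from being constant. A dichotomy based on whether $v_\infty(0) = 0$ then closes the argument: if $v_\infty(0) \neq 0$, the target $\mathbf{D}_k$ is smooth near $v_\infty(0)$ and standard elliptic theory contradicts the blow-up; if $v_\infty(0) = 0$, the doubling estimate \eqref{Hcomparison} applied to $v_\infty$ with positive Almgren frequency $N_{v_\infty}(0;0^+) > 0$ produces precisely the decay that was assumed to fail.

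With the Morrey decay in hand, the Poincar\'e inequality gives $\int_{B_r(a)} |u - (u)_{a,r}|^2 \,\mathrm{d}x \leq C\, r^{d+2\alpha}$, which is the Campanato condition characterizing $u \in C^{0,\alpha}(K)$ and, combined with the previous two steps, delivers the claimed bound $\|u\|_{C^{0,\alpha}(K)} \leq C(K, \Omega, k)\|u\|_{L^2(\Omega)}$. The hard part is the blow-up step: one must secure a uniform positive lower bound on the frequency of any non-trivial blow-up limit vanishing at the origin in terms of $k$ and $\|u\|_{L^2(\Omega)}$, and this is precisely where the cone structure of $\mathbf{D}_k$ and the rigidity clause of Lemma \ref{monotonocity} are essential and dictate the H\"older exponent $\alpha$.
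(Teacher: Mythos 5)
Your overall architecture (uniform $L^\infty$ bound from subharmonicity of $|u|^2$, an energy decay, then Morrey/Campanato) matches the paper's, and the first and last steps are fine. The gap is the middle step, which is the entire content of the lemma. The contradiction setup for the full Morrey decay $E(a;r)\le C\,(r/R)^{2\alpha}E(a;R)$ does not close: negating the statement produces ratios $\rho_j=r_j/R_j\downarrow 0$, and after rescaling and normalizing $E_{v_j}(0;1)=1$ the violated inequality lives at the collapsing scale $\rho_j$, where strong $H^1_{loc}$ convergence to $v_\infty$ gives no information; one cannot pass to the limit in $E_{v_j}(0;\rho_j)$, and controlling it would require uniform decay at all scales, i.e.\ exactly what is being proved. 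The standard repair --- and what the paper does --- is to prove a \emph{one-step} decay $E(0;\tilde\theta)\le\frac{1}{2}E(0;1)$ under a smallness hypothesis $E(0;1)<\epsilon_0$, check via \eqref{subharmonicaverage} and \eqref{harmonicmapmonotonicity} that this hypothesis is achieved at a definite scale $\theta(M,\epsilon)$ at every point, and then iterate.

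Your dichotomy would not salvage even the one-step decay. The branch ``$v_\infty(0)\neq 0$, so standard elliptic theory applies'' fails because minimizers into smooth targets need not be regular without a small-energy hypothesis, and here the energy of $v_\infty$ is normalized to $1$, not small. The branch ``$v_\infty(0)=0$'' is circular: pointwise evaluation presupposes the continuity being proved, the positivity $N_{v_\infty}(0;0^+)>0$ is essentially equivalent to the positivity of the H\"older exponent, and \eqref{Hcomparison} controls $H$, not $E$; converting $H$-decay into $E$-decay requires the Caccioppoli inequality \eqref{caccioppoli}, which you never invoke at this stage. The mechanism the paper actually uses is a harmonic-approximation (linearization) argument: after normalizing $H_i(0;1)$, the averages $\overline{u_i}$ are shown to stay uniformly away from the cone vertex ($|u_*|^2\ge 1/6$), so the normalized differences $U_i=E_i(0;1)^{-1/2}\left(u_i-u_*\right)$ converge strongly in $H^1_{loc}$ to a harmonic function valued in the hyperplane $T_{u_*}\mathbf{D}_k$; the interior decay estimate for harmonic functions then transfers back to $u_i$ and contradicts the failure of the one-step decay. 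This linearization step, and the lower bound keeping the limit away from the vertex that makes it legitimate, is absent from your proposal, and without it the decay does not follow.
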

\begin{proof}
We present a proof in the spirit of \cite{Wang2001}. 
Since the result is local, it suffices to prove \eqref{uniformholder} for a neighborhood of $0 \in \Omega$.
We will also restrict our attention to the physical case $d=3$ for simplicity, but the argument is identical for any dimension.

Firstly, we claim that for every $\epsilon > 0$, there exists a $\theta = \theta \left ( M, \epsilon \right )$ such that $E(0;\theta) < \epsilon$,
where $M = \| u \|_{L^{\infty}\left ( \partial B_1(0) \right )}$.
If not, there is an $\epsilon_0 > 0$ such that $\epsilon_0 \leq E(0;\theta)$ for every $\theta > 0$. 
From \eqref{subharmonicaverage} and \eqref{harmonicmapmonotonicity} we have:
\begin{equation*}
\begin{aligned}
\epsilon_0 \leq E(0; \theta) \leq \frac{1}{2 \left ( \log ( 1/ \theta ) \right )} \int_{\theta}^1 \frac{2 E(0;r)}{r} \, \mathrm{d}r
& \leq \frac{H(0;1) - \theta^{-2} H(0;\theta)}{2 \log ( 1/ \theta ) } \\
& \leq  \frac{ M^2 \mathcal{H}^2 \left ( \partial B_1(0) \right ) }{2  \log (1/\theta) } .
\end{aligned}
\end{equation*}
Hence, choosing $\theta$ small enough we obtain a contradiction. 

Next, we claim that there exist positive constants $\epsilon_0 > 0$ and $\tilde{\theta} > 0$, depending only on $k$ and $M$ such that if $E(0;1) < \epsilon_0$, then:
\begin{equation}
E(0;\theta R) \leq \frac{1}{2} E(0; R),
\label{energydecay} 
\end{equation}
From \eqref{energydecay}, it is standard to deduce that the renormalized Dirichlet energy $E(0;r)$ decays as a power of $r$, as $r \to 0^+$, and then
by Morrey's lemma that $u$ is locally H\"older continuous. Moreover, by scaling we may assume $R=1$. 

To prove \eqref{energydecay} by contradiction, we consider a sequence of minimizers $\left \{ u_i \right \}$ such that:
\begin{enumerate}[(i)]
 \item $\left | u_i \right | \leq M$,
 \item $E_i (0;1) = \int_{B_1(0)} \left | \nabla u_i \right |^2 \, \mathrm{d}x \to 0$, as $i \to \infty$, \label{renormalization}
and
 \item $E_i \left ( 0; \tilde{\theta} \right ) > (1/2) E_i (0;1)$, for $\tilde{\theta}$ to be chosen. \label{contradecay}
\end{enumerate}

If there exists a $C > 0$ such that $H_i(0;1) \leq C E_i(0;1)$ for all $i$, then \eqref{subharmonicaverage} and \eqref{harmonicmapmonotonicity} give:
\begin{equation*}
2 E_i \left (0; \tilde{\theta} \right ) \cdot \log \left (1/ \tilde{\theta} \right ) \leq \int_{\theta}^1 \frac{2 E_i(0;r)}{r} \, \mathrm{d}r \leq  H_i(0;1) \leq C E_i(0;1).
\end{equation*}
Choosing $\tilde{\theta} \in \left ( 0, e^{-C} \right )$, we obtain \eqref{energydecay} without using the small energy hypothesis. 
Therefore, we can restrict our attention to the case $H_i(0;1)$ does not decay as fast as $E_i(0;1)$. 
In particular, using the scaling invariance of the target $\mathbf{D}_k$, we can multiply $u_i$ by $H_i(0;1)^{-1/2}$ to assume:
\begin{equation}
H_i(0;1) = \mathcal{H}^2 \left ( \partial B_1(0) \right ), \label{bnormalization}
\end{equation}
while \eqref{renormalization} and \eqref{contradecay} still hold for the rescaled minimizers $\left \{ u_i \right \}$. 
Hence, by Lemma \ref{compactness}, $u_i \to u_\infty$ weakly in $H^1 \left ( B_1(0) \right )$ and strongly in $L^2 \left ( B_1(0) \right ) \cap H^1_{loc} \left ( B_1(0) \right )$,
where $u_\infty$ is a constant map, because its energy is zero by weak lower semicontinuity. 

For each $i$, we let $\tilde{u}_i \in \mathbf{D}_k$ be a minimizer of $\mathrm{dist} \left ( \overline{u_i} , \mathbf{D}_k \right )$, 
where $\overline{u_i} = \fint_{B_1(0)} u_i \, \mathrm{d}x $.
We note that:
\begin{equation}
\mathrm{dist}^2 \left ( \overline{u_i} , \mathbf{D}_k \right ) = \fint_{B_1(0)} \left | u_i - \overline{u_i} \right |^2 \, \mathrm{d}x \leq C E_i(0;1)
\label{convergingAv}
\end{equation}
by the Poincar\'e inequality. Likewise:
\begin{equation*}
0 \leq \fint_{B_1(0)} \left ( \left | u_i \right |^2 - \left | \overline{u_i} \right |^2 \right ) \, \mathrm{d}x
= \fint_{B_1(0)} \left | u_i - \overline{u_i} \right |^2 \, \mathrm{d}x \leq C E_i(0;1),
\end{equation*}
while by \eqref{Hcomparison}:
\begin{equation*}
\fint_{B_1(0)} \left | u_i \right |^2 \, \mathrm{d}x \geq \frac{1}{3 + 2 N_i(0;1)} \fint_{\partial B_1(0)} \left | u_i \right |^2 = \frac{1}{3+2 E_i(0;1)}.
\end{equation*}
Therefore, for $i$ large enough:
\begin{equation} 
\left | \overline{u_i} \right |^2 \geq \frac{1}{3+2 E_i(0;1)} - C E_i(0;1) \geq 1/6.
\label{awayAv}
\end{equation}
Similarly, from Jensen's inequality and \eqref{Hcomparison}, we have:
\begin{equation*} 
\left | \overline{u_i} \right |^2 \leq \mathcal{H}^2 \left ( \partial B_1(0) \right ) / \left | B_1(0) \right |.
\end{equation*}
Hence, passing to a subsequence if necessary, $\overline{u_i}$, and by \eqref{convergingAv} $\tilde{u}_i$, both converge to $u_*$, a constant with $\left | u_* \right |^2 \geq 1/6$.

We let $r_i = \left | u_* - \tilde{u}_i \right |$. If $\lim_{i \to \infty} \left (r_i / \epsilon_i \right ) < \infty$, then we define: 
$$U_i = E_i(0;1)^{-1/2} \left ( u_i - u_* \right ),$$
which satisfies $\left \| U_i \right \|_{L^2 \left ( B_1(0) \right ) } \leq C$, by the triangle and Poincar\'e inequalities, 
and $\left \| \nabla U_i \right \|_{L^2 \left ( B_1(0) \right ) } = 1$. Hence, by the $H^1$-weak convergence $U_i \rightharpoonup U_\infty$, $U_\infty$ maps 
almost every point in $B_1(0)$ to $T_{u_*} \mathbf{D}_k$, which is a hyperplane, as $\left | u_* \right |^2 \geq 1/6$.

If $\lim_{i \to \infty} \left (r_i / \epsilon_i \right ) < \infty$ is not true, then we define instead:
$$U_i = E_i(0;1)^{-1/2} \left ( u_i - \tilde{u}_i \right ).$$
We observe that $\left \| \nabla U_i \right \|_{L^2 \left ( B_1(0) \right ) } = 1$, and $\left \| U_i \right \|_{L^2 \left ( B_1(0) \right ) } \leq C$. The latter estimate is due to
\eqref{convergingAv} and the Poincar\'e inequality. Hence, $U_i \rightharpoonup U_\infty$ weakly in $H^1 \left (B_1(0) \right )$ and strongly in $L^2 \left (B_1(0) \right )$. 
Once again we claim that $U_\infty$ maps $B_1(0)$ to $T_{u_*} \mathbf{D}_k$ almost everywhere. 

To verify this claim, firstly, we note that due to the strong convergence in 
$L^2 \left ( B_1(0) \right )$ and Egorov's theorem, for every $\delta > 0$, there exists a $E_\delta \subset B_1(0)$ such that $\left | B_1(0) \backslash E_\delta \right | < \delta$, 
$U_i$ and $U_\infty$ are bounded on $E_\delta$, and the convergence is uniform.
Secondly, there exists a sequence of maps $W_i : B_1(0) \setminus E_\delta \to T_{\tilde{u}_i} \mathbf{D}_k$ such that $ \left | U_i - W_i \right | = \gamma_i \to 0$.
Finally, since \eqref{awayAv} guarantees that $\tilde{u}_i$ and $u_*$ are bounded uniformly away from $0$,
there exists a sequence of map $\tau_i : \, T_{\tilde{u}_i} \mathbf{D}_k \to T_{\tilde{u}_*} \mathbf{D}_k$, which coverges to the identity, as $i \to \infty$.
Letting $\tilde{W}_i = \tau_i \circ W_i : B_1(0) \setminus E_\delta \to T_{\tilde{u}_*} \mathbf{D}_k$, we have:
\begin{equation*}
\begin{aligned}
\left | \tilde{W}_i - U_\infty \right | & \leq \left | U_\infty - U_i \right | + \left | U_i - W_i \right | + \left | W_i - \tilde{W}_i \right | \\
& \leq  \left | U_\infty - U_i \right | + \left | U_i - W_i \right |
+ \left | \tau_i - id \right | \cdot \left ( \left | U_i \right | + \gamma_i \right ).
\end{aligned}
\end{equation*}
Thus, we conclude that $U_\infty$ maps $B_1(0) \backslash E_\delta$ to $T_{u_*} \mathbf{D}_k$. But since $\delta > 0$ is arbitrary, the claim must hold almost everywhere. 
Since $u_* \neq 0 \in \mathbf{D}_k$, $U_\infty$ takes value in a hyperplane.

Our final claim is that $U_\infty$ is a vector-valued harmonic function. 
$U_i$ also satisfy the Caccioppoli-type inequality \eqref{caccioppoli} with the same uniform constant as $u_i$. 
Furthermore, the finite energy extension in Lemma \ref{compactness}, applies with a uniform estimate, even though the targets of $U_i$ differ by translation and scaling. 
To see this, we recall that the constant in \eqref{aExtension} depends only on the Lipschitz norms of the retractions $P_i$ constructed for each $U_i$. 
Since $\tilde{u}_i$ are bounded uniformly away from $0 \in \mathbf{D}_k$, the Lipschitz norms of $P_i$ are uniformly bounded,
i.e. we can proceed exactly as in the case of a smooth target. 
Thus, repeating the argument in Lemma \ref{compactness}, we conclude that the limit $U_\infty$ is a minimizer.

However, since it takes value in a hyperplane, $U_\infty$ must be a vector-valued harmonic function. 
In addition, the convergence is strong in $H^1_{loc} \left ( B_1(0) \right )$.
In particular, $U_\infty$ satisfies the following basic estimate for harmonic functions:
\begin{equation*}
\int_{B_{\tilde{\theta}}(0)} \left | \nabla U_\infty \right |^2 \, \mathrm{d}x \leq C \tilde{\theta}^3 \int_{B_1(0)} \left | \nabla U_\infty \right |^2 \, \mathrm{d}x. 
\end{equation*}
From the strong convergence of $U_i$ to $U_\infty$ in $H^1_{loc} \left ( B_1(0) \right )$ 
and the fact that 
$$ \int_{B_1(0)} \left | \nabla U_\infty \right |^2 \, \mathrm{d}x \leq 1,$$ 
we obtain:
\begin{equation*}
\lim_{i \to \infty} \left ( \frac{1}{\tilde{\theta}} \int_{B_{\tilde{\theta}}} \left | \nabla U_i \right |^2 \, \mathrm{d}x \right ) \leq C \tilde{\theta}^2 = \frac{1}{3},
\end{equation*}
for $\tilde{\theta} \leq \sqrt{1/3C}$. Hence, for $i$ large enough, we obtain:
\begin{equation*}
\frac{1}{\tilde{\theta}} \int_{B_{\tilde{\theta}}(0)} \left | \nabla U_i \right |^2 \, \mathrm{d}x < \frac{1}{2} = \frac{1}{2} \int_{B_1(0)} \left | \nabla U_i \right |^2 \, \mathrm{d}x.
\end{equation*}
contradicting \eqref{contradecay}. 

We conclude that \eqref{energydecay} holds for some $\epsilon_0$ and $\tilde{\theta} > 0$. 
Moreover, due to \eqref{subharmonicity}, the mean-value inequality for subharmonic functions, and a standard covering argument, the $L^\infty$-norm of $u$ in every compact
$E \subset \Omega$ is controlled by $\| u \|_{L^2(\Omega)}$. Thus, $\tilde{\theta}$ depends only on the latter, as well as $k$.  
\end{proof}

\begin{remark} \label{higherregularity}
Having established the continuity of energy minimizing maps $u$ into $\mathbf{D}_k$ in the interior of $\Omega$, we can recover the variable order parameter $s$ 
and the $\mathbf{RP}^2$-valued director field $n$ on the set $\left \{ x \in \Omega \, : \, \left | u(x) \right | > 0 \right \}$ as:
\begin{equation*}
s = k^{-1/2} |u|, \quad  u = \left ( \sqrt{k-1}s, v \right ), \quad n = \frac{1}{s} v. 
\end{equation*}
Moreover, the analyticity of the energy minimizing map $u$ on the complement of $u^{-1} \{0\}$ follows from the fact that $\mathbf{D}_k$ is analytic away from its vertex. 
Therefore, the corresponding $s$ and $n$ are also analytic on the complement of the closed set $\left \{ x \in \Omega \, : \, s(x) = 0 \right \}$ in $\Omega$.
See \cite[Section 3.2]{Lin89} for the proof that $\mathrm{sing}(n) = s^{-1} \{0\}$.
\end{remark}

We note that the monotonicity of the Almgren frequency
\eqref{almgrenmonotonicity}, the compactness of minimizers as in Lemma \ref{compactness} and the regularity theory as in Lemma \ref{holderestimate}
allow us to consider the blow-up sequences of minimizers whose limits are non-trivial homogeneous minimizers. In other words, we have the following lemma:

\begin{lemma} \label{existenceHblowups}
For $u \, : \, \Omega \to \mathbf{D}_k$ an energy minimizing map such that $u(a) = 0$, and for $r_i \in \left ( 0 , \frac{1}{2} \mathrm{dist}(a, \partial \Omega) \right )$,
the sequence of maps
\begin{equation} 
u_i(x) = \left ( \fint_{B_{r_i}(a)} |u|^2 \, \mathrm{d}x \right )^{-\frac{1}{2}} u \left ( a + r_i x \right )
\label{scaling}
\end{equation}
has a subsequence that converge strongly in $H^1 \left ( B_2(0) \right )$ and uniformly on $B_{2}(0)$ to a non-zero minimizing map $\varphi$, which is homogeneous
of degree $N_u \left ( a ; 0^+ \right )$.
\end{lemma}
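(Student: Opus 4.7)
My plan is to show each $u_i$ is energy minimizing on $B_2(0)$, obtain uniform estimates via doubling and frequency bounds, pass to a strong $H^1_{\mathrm{loc}}$ limit using Lemmas \ref{holderestimate} and \ref{compactness}, and identify the degree of homogeneity from the scale invariance of Almgren's frequency. Since $\mathbf{D}_k$ is a cone and $2r_i < \mathrm{dist}(a,\partial\Omega)$, the rescaling $u_i(x) = c_i^{-1} u(a + r_i x)$, with $c_i^2 = \fint_{B_{r_i}(a)} |u|^2\,\mathrm{d}x$, takes $B_2(0)$ into $\mathbf{D}_k$ and is itself energy minimizing on $B_2(0)$. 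A change of variables yields $\fint_{B_1(0)} |u_i|^2\,\mathrm{d}x = 1$ and the scaling identity $N_{u_i}(0;\rho) = N_u(a; r_i \rho)$; since the conclusion refers to $N_u(a;0^+)$ it is natural to assume $r_i \to 0$.

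Next I would establish uniform estimates. Fixing $R_0 \leq \tfrac{1}{2}\mathrm{dist}(a,\partial\Omega)$, the monotonicity \eqref{almgrenmonotonicity} gives $N_{u_i}(0;\rho) \leq N_u(a;R_0) =: N_0$ for all $\rho \leq 2$, once $i$ is large enough that $2r_i \leq R_0$. Combining the identity $\int_0^1 H_{u_i}(0;s)\,\mathrm{d}s = |B_1|$ with the two-sided doubling estimate \eqref{Hcomparison} shows that $H_{u_i}(0;1)$ is bounded above and below by constants depending only on $N_0$ and $d$; a second application of \eqref{Hcomparison} propagates this bound to every $\rho \in (0,2)$. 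The definition of $N_{u_i}$ then yields a uniform Dirichlet energy bound on every concentric subball of $B_2(0)$.

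With uniform $L^2$ and local $H^1$ bounds in hand, Lemma \ref{holderestimate} provides uniform $C^{0,\alpha}$ estimates on compact subsets of $B_2(0)$. Arzelà-Ascoli together with weak $H^1$ compactness and a diagonal exhaustion of $B_2(0)$ produce a (not relabeled) subsequence converging uniformly on compact subsets and weakly in $H^1_{\mathrm{loc}}(B_2(0))$ to some $\varphi : B_2(0) \to \mathbf{D}_k$. Applying Lemma \ref{compactness} on each $B_\rho(0)$ with $\rho < 2$ upgrades this to strong $H^1_{\mathrm{loc}}$ convergence and identifies $\varphi$ as energy minimizing. The uniform convergence on $\overline{B_1(0)}$ passes the normalization to the limit, so $\fint_{B_1(0)} |\varphi|^2\,\mathrm{d}x = 1$, whence $\varphi \not\equiv 0$.

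For the final step, strong $H^1_{\mathrm{loc}}$ convergence gives $D_{u_i}(0;\rho) \to D_\varphi(0;\rho)$, and uniform convergence on each $\partial B_\rho(0)$ gives $H_{u_i}(0;\rho) \to H_\varphi(0;\rho)$, for every $\rho \in (0,2)$. Combining with the scaling identity of the first paragraph and the existence of $N_u(a;0^+)$ from Lemma \ref{monotonocity},
\begin{equation*}
N_\varphi(0;\rho) = \lim_{i \to \infty} N_{u_i}(0;\rho) = \lim_{i \to \infty} N_u(a; r_i \rho) = N_u(a; 0^+).
\end{equation*}
Thus $\rho \mapsto N_\varphi(0;\rho)$ is constant, and the equality case of \eqref{almgrenmonotonicity} in Lemma \ref{monotonocity} forces $\varphi$ to be homogeneous of degree $N_u(a;0^+)$ about the origin. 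The one delicate point is the book-keeping in the second paragraph, which converts the scalar normalization $\fint_{B_1(0)}|u_i|^2 = 1$ into two-sided control of $H_{u_i}$ on every subscale of $B_2(0)$; everything else is a routine assembly of Lemmas \ref{monotonocity}--\ref{holderestimate}.
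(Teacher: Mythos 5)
Your proposal is correct and follows essentially the same route as the paper: a uniform $H^1$ bound from the frequency monotonicity together with the doubling estimate \eqref{Hcomparison}, strong convergence and minimality of the limit via Lemmas \ref{caccioppolitype}, \ref{compactness} and \ref{holderestimate}, non-triviality from the $L^2$-normalization, and homogeneity from the constancy of $N_\varphi(0;\cdot)$ and the equality case of \eqref{almgrenmonotonicity}. Your explicit remark that $r_i \to 0$ must be assumed is a fair point the paper leaves implicit, and your bookkeeping for the two-sided control of $H_{u_i}$ is a minor variant of the paper's direct computation, not a different argument.
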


\begin{proof}
Firstly, we derive a uniform $H^1$ bound for $u_i$.
\begin{equation*}
\int_{B_2(0)} \left | \nabla u_i \right |^2 \, \mathrm{d}x = 
\frac{ \left ( 1 / r_i \right ) \int_{B_{2r_i}(a)} | \nabla u |^2 \, \mathrm{d}x }{\fint_{B_{r_i}(a)} |u|^2 \, \mathrm{d}x }
= N \left ( a ; 2r_i \right ) \cdot \left [ \left ( r_i / 2 \right ) \frac{\int_{\partial B_{2r_i}(a)} |u|^2 \, \mathrm{d}A}{\int_{B_{r_i}(a)} |u|^2 \, \mathrm{d}x} \right ].
\end{equation*}
Using \eqref{Hcomparison} as in the derivation of \eqref{localFcontrol} to estimate the denominator from below in terms of the numerator in the second factor, we obtain:
\begin{equation*}
\int_{B_2(0)} \left | \nabla u_i \right |^2 \, \mathrm{d}x \leq N \left ( a; 2r_i \right ) \left ( 3 + 2 N \left ( a; 2r_i \right ) \right ) 2^{2+N \left ( a; 2r_i \right )}. 
\end{equation*}
Hence, by the monotonicity formula \eqref{almgrenmonotonicity}, we have a uniform $H^1 \left ( B_2 (0) \right )$ bound for $u_i$.

Next we observe that since $u \left ( a + r_i x \right )$ are minimizers, uniformly bounded on $B_2(0)$, they satisfy the Caccippoli inequality \eqref{caccioppoli} with a uniform constant. 
Since each term is quadratic in this inequality, dividing both sides by the denominator of \eqref{scaling}, we obtain a corresponding Caccippoli inequality with the same
uniform constant that is satisfied by $u_i$. Moreover, each $w_i$ is a minimizer. Hence by Lemma \ref{compactness}, $\varphi$ is a minimizer and the convergence is strong
$H^1$. Observing that $\left \| u_i \right \|_{L^2 \left (B_1(0) \right )} = 1$, from \eqref{Hcomparison} and \ref{holderestimate} we infer that the convergence is also
uniform in $B_2(0)$. Also note that this limit is not the zero map, since $ \left \| \varphi \right \|_{L^2 \left (B_1(0) \right )} = 1 $.  

Finally, using the scaling property of and monotonicity of the Almgren frequency, as well as the mode of the convergence, we observe that:
\begin{equation}
N_{\varphi} (0; \rho) = \lim_{i \to \infty} N_{u_i}(0;\rho) = \lim_{i \to \infty} N_{u}(a; r_i\rho) = N_u \left ( a ; 0^+ \right ),
\label{frequencyandlimit}
\end{equation}
for every $\rho \in (0,1)$. Hence, we conclude that $\varphi$ is homogeneous of degree $N_u \left ( a ; 0^+ \right )$ by Lemma \ref{monotonocity}.
\end{proof}

Finally we recall the estimate on the Hausdorff dimension of the zero set of $u$:

\begin{lemma} \label{Hdimension}
For any energy minimizing map $u \, : \, \Omega \to \mathbf{D}_k$ with $k>1$, $u^{-1} \{0\}$ is either all of $\Omega$ or, it has Hausdorff dimension less than or equal to 1.
\end{lemma}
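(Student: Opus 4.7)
The strategy is Federer-style dimension reduction, exactly along the lines of \cite{Lin89,Lin91}, based on the monotonicity and upper semicontinuity of the Almgren frequency (Lemma~\ref{monotonocity}), the compactness of minimizers (Lemma~\ref{compactness}), and the existence of non-trivial homogeneous tangent maps (Lemma~\ref{existenceHblowups}).

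The first ingredient I would establish is a \emph{translation-invariance rigidity}: if $\varphi\colon\mathbf{R}^d\to\mathbf{D}_k$ is a homogeneous minimizer with $\varphi(0)=0$ and $N_\varphi(0;0^+)=\alpha$, and if there is some $b\neq 0$ with $\varphi(b)=0$ and $N_\varphi(b;0^+)=\alpha$, then $\varphi$ is invariant under translation by $b$. This follows by tracking the Cauchy--Schwarz equality case used to derive the monotonicity identity along the ray through $b$: constancy of $N_\varphi(\cdot;0^+)$ on $\mathbf{R}b$ forces $\varphi$ to be simultaneously scale-homogeneous about $0$ and about every point on that line, which is equivalent to translation invariance in the direction $b$.

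Next, assuming $u\not\equiv 0$ and $\dim_{\mathcal{H}}(u^{-1}\{0\})>1$, I would argue by contradiction. The upper semicontinuity of $N_u(\cdot;0^+)$ makes each stratum $\{a:N_u(a;0^+)\ge\lambda\}$ closed. A standard Hausdorff density-point extraction on an appropriate stratum, combined with Lemma~\ref{existenceHblowups}, yields a non-trivial homogeneous minimizer $\varphi\colon\mathbf{R}^d\to\mathbf{D}_k$ whose zero set still has Hausdorff dimension strictly greater than $1$. Iteratively applying the translation-invariance rigidity at points of $\varphi^{-1}\{0\}$ realizing the top frequency, together with a further tangent-map blow-up given by Lemma~\ref{existenceHblowups}, produces after finitely many steps a non-trivial homogeneous minimizer invariant under translations in a linear subspace $V\subset\mathbf{R}^d$ of dimension at least $d-1$; quotienting by $V$ gives a non-trivial homogeneous energy-minimizing map $h\colon\mathbf{R}\to\mathbf{D}_k$ with $h(0)=0$.

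The main obstacle, and where the hypothesis $k>1$ enters decisively, is excluding such an $h$. On a one-dimensional domain, the frequency identity of Lemma~\ref{monotonocity} forces the degree of homogeneity to satisfy $\alpha=\alpha^2/(2\alpha-1)$, hence $\alpha=1$, so $h$ is a broken ray through the vertex of $\mathbf{D}_k$. Because $\mathbf{D}_k$ is the cone over $\mathbf{RP}^2$ endowed with the round metric of radius $1/\sqrt{k}$ and $k>1$, the intrinsic diameter of the link is $\pi/(2\sqrt{k})<\pi$; consequently, for any two unit-distance points $\xi_\pm\in\mathbf{D}_k$ there is a geodesic from $\xi_-$ to $\xi_+$ inside $\mathbf{D}_k$ of length strictly less than $2$ that avoids the vertex. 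Replacing $h$ by a reparameterization along this shorter geodesic strictly decreases the Dirichlet energy, contradicting the minimality of $h$ and closing the argument. This final geometric step is precisely what distinguishes $k>1$ from the flat and negatively curved cases.
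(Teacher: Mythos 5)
Your proposal is correct and follows essentially the same route as the paper: Federer--Almgren dimension reduction driven by the monotone frequency, reducing (in the relevant case $d=3$) to a non-constant one-variable homogeneous minimizer, i.e.\ a constant-speed minimizing geodesic hitting the vertex, which is then excluded for $k>1$. Your cone-geometry justification --- the link of $\mathbf{D}_k$ is $\mathbf{RP}^2$ of radius $1/\sqrt{k}$, of diameter less than $\pi$, so any path through the vertex can be strictly shortened --- simply makes explicit the final step that the paper asserts without proof.
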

\begin{proof}
The proof in \cite{Lin89} and \cite{Lin91} is based on the dimension reduction principle as carried out in \cite{Lin91N}. We outline the argument here for completeness. 

Note that if $F = u^{-1} \{0\} \cap B_1(0)$ for a minimizing map $u$, and $B_1(0) \subset \Omega$, then $F$ is relatively closed by Lemma \ref{holderestimate}.
We observe that the following two properties also hold: Firstly, the collection of zero sets of minimizing maps are closed under scaling and translations.
Secondly, for the zero set of every minimimizing map there exists a homogeneous degree zero ``tangent set'', cf. \cite[Section 2]{Lin91N} for details.
The second property is a consequence of the monotonicity of the Almgren frequency \eqref{almgrenmonotonicity}, the compactness of minimizers as in Lemma \ref{compactness} and
the estimate in Lemma \ref{holderestimate}. Consequently, the dimension reduction principle applies to the zero sets of minimizing maps.

Therefore, by the argument sketched in \cite{Lin91N}, either $s \equiv 0$ on all of $\Omega$, or $\mathrm{dim}_{H} \left ( s^{-1}\{0 \} \right ) \leq d$ for some nonnegative integer
$d \leq 2$. In order to show $d < 2$, it suffices to rule out minimizers depending on one variable, which are essentially minimizing geodesics of constant speed. 
Observe that a minimizing geodesic of constant speed cannot hit the vertex of $\mathbf{D}_k$ at an interior point of its domains without being trivial. 
Hence, we obtain the desired Hausdorff dimension estimate.
\end{proof}

\begin{remark} \label{linedefect}
The Hausdorff dimension estimate in \eqref{Hdimension} is sharp, as pointed out in \cite[Section 4]{HardtLin93}.
Consider $\Omega = \mathbf{B}_1^{2}(0) \times (0,1)$ with the boundary data $n_0 \left ( x_1, x_2, x_3 \right ) =  \left [ \left ( x_1, x_2, 0 \right ) \right ] \in \mathbf{RP}^2$
and $s_0 > 0$
on $\partial \mathbf{B}_1^2(0) \times (0,1)$. By Lemma \ref{holderestimate}, the corresponding minimizer $u$ is continuous. 
However, $n_0$ does not have a continuous extension to $\mathbf{B}_1^{2}(0) \times \left \{ t \right \}$ for any $t \in (0,1)$. 
Therefore, on each horizontal slice $s$ must vanish. Hence,
$\mathrm{dim}_{H} \left ( s^{-1}\{0 \} \right ) \geq 1$.
\end{remark}

\section{Homogeneous Minimizers} \label{HomogeneousMinimizers}

In this section we consider non-constant homogeneous energy minimizing maps $v \, : \mathbf{R}^m \to \mathbf{D}_k$
for $m = 1, 2, 3$. 
By Lemma \ref{existenceHblowups} such maps arise as the blow-up limits of general minimizers at points in their zero sets. 
The cases $m = 1,2$ relate to tangent maps that are independent of two variables and one variable respectively. 
We first state a lemma that follows from \cite{Lin89}, \cite{Lin91} and \cite{HardtLin93}.

\begin{lemma} \label{lowerHomogeneous}
Let $w \, : \mathbf{R}^m \to \mathbf{D}_k$ be a homogeneous energy minimizing map with $k>1$. 
If $m=1$, then $w$ has to be a constant map.
If $m=2$, then $w$ is determined by the formula:
\begin{equation}
w \left ( r e^{i \theta} \right ) = r^{1/2\sqrt{k}} \left (\sqrt{k-1}, \left [ (e^{i \phi/2},0 ) \right ] \right ),
\label{2Dhminimizer} 
\end{equation}
uniquely up to rotations and scalar multiplication by constants.
\end{lemma}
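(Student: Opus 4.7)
The plan is to reduce both cases to variational problems on $S^{m-1}$ using homogeneity, and then to exploit the distinct topologies.

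For $m=1$: Local finiteness of the Dirichlet energy forces the degree $\alpha \geq 0$. If $\alpha = 0$, then $w$ is constant on each half-line and continuity at $0$ makes it globally constant. Otherwise $\alpha > 0$, so $w(0) = 0$ and on each half-line $w$ is a constant-speed minimizing geodesic in $\mathbf{D}_k$ through the vertex. Since $k > 1$, the cone $\mathbf{D}_k$ is positively curved, and the minimizing geodesic in $\mathbf{D}_k$ from $w(-\epsilon)$ to $w(\epsilon)$ has length strictly less than $|w(-\epsilon)| + |w(\epsilon)|$ whenever at least one endpoint is nonzero. Replacing $w|_{[-\epsilon, \epsilon]}$ by a constant-speed parametrization of this geodesic is admissible and strictly reduces Dirichlet energy by the length-energy inequality, contradicting minimality unless $w$ is trivial; hence $\alpha = 0$ and $w$ is constant.

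For $m=2$: I would write $w(r, \theta) = r^\alpha\bigl(\rho(\theta) n(\theta), \sqrt{k-1}\,\rho(\theta)\bigr)$ with $\alpha = N_w(0; 0^+)$, $\rho \geq 0$, and $n$ a local lift of the director $[n]\colon S^1 \to \mathbf{RP}^2$ to $S^2$. The harmonic map equation for $w$ into $\mathbf{D}_k$ (away from its vertex) then reduces to the ODE system
\begin{equation*}
\rho'' = \bigl(|n'|^2/k - \alpha^2\bigr)\rho, \qquad (\rho^2 n')' + \rho^2|n'|^2 n = 0
\end{equation*}
on $S^1$. Positivity and $2\pi$-periodicity of $\rho$ force, via a Sturm--Liouville argument, the coefficient $|n'|^2/k - \alpha^2$ to vanish identically; hence $\rho$ is constant, $|n'|$ is constant, and $n$ traces a great circle in $S^2$ at uniform speed. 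Closing up on $\mathbf{RP}^2$ then requires $|n'| = j/2$ for some positive integer $j$, with odd $j$ giving non-contractible loops and even $j$ contractible ones.

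For even $j$, $n$ lifts globally to a closed loop on $S^2$ and so $w$ lifts to a $\mathbf{C}_k$-valued homogeneous energy minimizing map on $\mathbf{R}^2$; by the classification in \cite{HardtLin93} no non-trivial such map exists, and $w$ is trivial. For odd $j$ with $j \geq 3$, the homogeneous critical point has a single winding-$j/2$ defect at the origin that can be split into $j$ defects of winding $1/2$, strictly reducing energy---a splitting argument which reduces locally near each split point to the same nonexistence result in \cite{HardtLin93}. This leaves $j = 1$, yielding $\alpha = 1/(2\sqrt{k})$ and the stated formula; the uniqueness up to rotation and scalar multiplication corresponds to the choice of great circle and positive constant $\rho$. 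The hardest step is the splitting argument ruling out higher windings, which is essentially \cite{HardtLin93} adapted to the $\mathbf{RP}^2$ setting.
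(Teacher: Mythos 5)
Your $m=1$ argument is essentially the paper's (a constant-speed minimizing geodesic in the positively curved cone $\mathbf{D}_k$, $k>1$, cannot pass through the vertex without being trivial), and your reduction of the $m=2$ case to an ODE system on $S^1$ is the right starting point. The gap is at the step where you claim that positivity and $2\pi$-periodicity of $\rho$ force the coefficient $|n'|^2/k-\alpha^2$ to vanish identically ``via a Sturm--Liouville argument.'' This is false: a positive periodic solution of $\rho''=V(\theta)\rho$ only says that $0$ is the principal periodic eigenvalue of $-\mathrm{d}^2/\mathrm{d}\theta^2+V$, not that $V\equiv 0$. Concretely, your angular equation yields the first integral $\rho^2|n'|\equiv c$, so the radial equation becomes the autonomous ODE $\rho''=c^2/(k\rho^3)-\alpha^2\rho$, i.e.\ motion in the effective potential $U(\rho)=c^2/(2k\rho^2)+\alpha^2\rho^2/2$, which blows up as $\rho\to 0^+$ and as $\rho\to\infty$; every orbit above the minimum energy is a positive, non-constant periodic oscillation, and for suitable $\alpha$ these close up into genuine $2\pi$-periodic critical points. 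These are precisely the modifications of the maps (2) and (3) of \cite[Theorem 3.2]{HardtLin93} that the paper must exclude, and they cannot be dismissed by ODE analysis alone --- they are honest critical points of the reduced functional. The paper rules them out by a minimality argument: the Hopf differential $\omega^w$ is holomorphic, so either $\omega^w\equiv 0$ (your constant-$\rho$ case) or $\omega^w=Cz^{2\alpha-2}$ with $\alpha=n/2$ for an integer $n\geq 2$, and in the latter case the map covers a subcone of $\mathbf{D}_k$ with even multiplicity over a wedge and is excluded by the peeling-off variation of \cite[Section 2]{HardtLin93}. Some version of this comparison step is indispensable.

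Within the constant-$\rho$ case your treatment is in the right spirit and parallels the paper: even winding lifts to a $\mathbf{C}_k$-valued homogeneous minimizer, of which there are none by \cite{HardtLin93}, and odd winding $j\geq 3$ must be excluded by an energy-decreasing surgery. Note, however, that the paper does not split the degree-$j/2$ defect into $j$ half-defects (a construction requiring a genuine gluing and energy-comparison argument that you have not supplied); instead it restricts the minimizer to a wedge $\{0\leq\theta\leq 2\pi j'/n\}$ over which $[\psi]$ covers its image with even multiplicity, where the restricted map is still minimizing, and applies the peeling-off variation there. You should either carry out your splitting construction in detail or adopt the wedge restriction. Finally, your write-up silently assumes $\rho>0$ on all of $S^1$; a priori $\psi$ may vanish on part of the circle, where the Euler--Lagrange equation is not available, and this degenerate possibility also needs to be addressed.
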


\begin{proof}
The case $m=1$ is already addressed in Lemma \ref{existenceHblowups}.
In the case $m=2$ we give an overview of the corresponding classification result in \cite[Section 4]{HardtLin93}, for completeness.

Since $w$ is homogeneous, we can write $w(r, \theta) = r^{\alpha} \varphi ( \theta )$, 
where $\alpha > 0$ and $\varphi : \mathbf{R} \to \mathbf{D}_k$ is absolutely continuous and $2 \pi$-periodic.
Moreover, $\varphi = \left ( \sqrt{k-1} | \psi | , [ \psi ] \right )$, where $\psi : \mathbf{R} \to \mathbf{R}^3$.
Considering the variations of $w$ given by
\begin{equation*}
w^t = r^\alpha \left ( \sqrt{k-1} \left | \psi^t \right |, \left [ \psi^t \right ] \right ), 
\end{equation*}
where $\psi^t (r, \theta ) = \psi (\theta) + t \xi (r, \theta )$ for $\xi$ smooth, $2 \pi$-periodic in $\theta$ and compactly supported,
we obtain the following Euler-Lagrange equation:
\begin{equation}
\psi_{\theta \theta} + \left  [k \alpha^2 + (k-1) Q_\theta + (k-1) Q^2 \right ] \psi = 0,
\label{ELhom} 
\end{equation}
which is valid on intervals on which $\psi$ does not vanish. 
Here $Q = | \psi |^{-2} \psi \cdot \psi_\theta$, and \eqref{ELhom} can be written down via a suitable (local) lifting.

Next, in order to simplify this nonlinear ordinary differential equation, we make use of the fact that in the case $m = 2$,
the Hopf differential of the map $w$ defined as:
\begin{equation}
\omega^w = \left ( \left | w_x \right |^2 - \left | w_y \right |^2 \right ) + 2 \left ( w_x \cdot w_y \right ) i
\label{Hopf} 
\end{equation}
is holomorphic. As $\omega^w = f(z) dz^2$ for some entire function $f$ of the form $r^{2 \alpha - 2} \Phi(\theta)$,
we have: either $\omega^w \equiv 0$, or $f(z) = Cz^{2\alpha - 2}$ for $\alpha = n/2$ and some integer $n \geq 2$, and some non-zero complex scalar $C$.

In the first case, using $\omega^w \equiv 0$, it is easy to check that $| \psi |^2$ is a positive constant, $Q \equiv 0$, and \eqref{ELhom} reduces to:
\begin{equation}
 \psi_{\theta \theta} + \left ( k \alpha^2 \right ) \psi = 0,
\label{ELhom2}
\end{equation}
which now holds for all $\theta$, as $ | \psi | \equiv \lambda > 0$. Finally, checking that $\psi$ maps into a 2-dimensional subspace,
we obtain after adjusting the coordinates for $\mathbf{R}^3$ with a suitable rotation:
$ \psi = \lambda \left ( \cos ( n \theta ), \sin ( n \theta ), 0 \right ) $ for $n^2 = k \alpha^2$. 

Here at a first inspection $n$ can be any positive half-integer, since we only require $[ \psi ]$ to be $2 \pi$-periodic and 
$v$ to be in $C^{0,\alpha}_{loc}$ for some $\alpha >0$ by Lemma \ref{holderestimate}. 
However, when $|n| > 1$, for each integer $j$, 
the closed curve that is the image of $\left [ 0, \frac{2 \pi j}{n} \right ]$ 
under the map $[ \psi ]$ covers its image with even multiplicity. But since $\pi_1 \left ( \mathbf{RP}^2 \right ) = \mathbf{Z}_2$, any such
curve is contractible away from the origin. When the curve is contractible, the construction in \cite[Section 2]{HardtLin93}, for the case $\varphi$
mapping into a cone over $\mathbf{S}^2$ instead of $\mathbf{RP}^2$ carries over. 
A minimizing map $w$ restricted to the infinite wedge given by $\left [ 0, \frac{2 \pi j}{n} \right ]$ is minimizing on this subdomain. 
On the other hand, the variation constructed in \cite[Section 2]{HardtLin93}, mapping outside the subcone generated by the image of $\psi$ decreases the energy, 
giving a contradiction.
Hence, we must have $n = 1/2$. Thus, we arrive at \eqref{2Dhminimizer}.

While the second case $\omega^w \not \equiv 0$ is more complicated, the requirement that the Hopf differential \eqref{Hopf} is holomorphic and the equation
\eqref{ELhom} lead to modifications of the corresponding maps (2) and (3) in \cite[Theorem 3.2]{HardtLin93}, which necessarily map infinite wedges in $\mathbf{R}^2$ 
onto subcones with even multiplicity. Such maps fail to be minimizers in these subdomains by the above ``peeling-off'' argument. 
Therefore, they cannot be global minimizers either. 
Hence, \eqref{2Dhminimizer} is the only remaining candidate for a minimizer. 
Since we know that there exists a minimizer for the boundary data that is the restriction of \eqref{2Dhminimizer} to the unit circle, 
we conclude that \eqref{2Dhminimizer} is indeed a minimizer.
\end{proof}

\begin{remark} \label{2Dfrequency}
The $L^2$-normalization of the blow-up sequences considered in Lemma \ref{existenceHblowups} eliminates scalar multiplication by constants,
leaving the group of rotations in $\mathbf{R}^3$ as the only source of possible non-uniqueness for tangent maps of two variables. 
Thus, the question becomes whether two distinct sequences of $r_i \downarrow 0$ could give rise to two distinct blow-up limits that differ by some rotation, cf. \cite{AUniqueness}.
\end{remark}

Next we address the case $m=3$. Firstly, we prove a local almost-minimality result.

\begin{lemma} \label{almostminimizer}
For any non-constant energy-minimizing map $v: \mathbf{R}^3 \to \mathbf{D}_k$ that is homogeneous of degree $\alpha$,  
$v|_{\mathbf{S}^2}$  is locally almost minimizing in the sense that there exists a constant $C$ 
such that for all $b \in \mathbf{S}^2$ and $\rho > 0$:
\begin{equation}
\int_{\mathbf{S}^2 \cap B_{\rho}(b)} \left | \nabla_{\mathrm{tan}} v \right |^2 \, \mathrm{d}\mathcal{H}^2
\leq C \rho +
\int_{\mathbf{S}^2 \cap B_{\rho}(b)} \left | \nabla_{\mathrm{tan}} h \right |^2 \, \mathrm{d}\mathcal{H}^2, 
\label{almostminimality}
\end{equation}
whenever $h \in H^1 \left ( \mathbf{S}^2 \cap B_{\rho}(b), \mathbf{D}_k \right )$ agrees with $v$ on $\mathbf{S}^2 \cap \partial B_{\rho}(b)$.
The constant $C$ depends on $\alpha$, $E_0 = \int_{\mathbf{S}^2} | \nabla_{\mathrm{tan}} v |^2 \, \mathrm{d} x $ 
and $ M_0 =   \| v \|_{L^\infty \left ( S^2 \right )}^2 $ only. Moreover, if $v$ is replaced with $\lambda v$ for a constant $\lambda > 0$, then
\eqref{almostminimality} holds with $\lambda^2 C$ instead of $C$.
\end{lemma}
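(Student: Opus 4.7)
The plan is to exploit the interior minimality of $v$ as a map $\mathbf{R}^3 \to \mathbf{D}_k$ by comparing it on a thin truncated subcone over $\mathbf{S}^2 \cap B_\rho(b)$ to a competitor $\tilde v$ built by radially extending $h$. The $O(\rho)$ error will come from a radial transition annulus of width proportional to $\rho$ that interpolates between the radial extension of $h$ and that of $v$. Throughout, I would make two reductions: for $\rho$ bounded below by a constant depending on $\alpha$, $E_0$, $M_0$ the inequality is automatic upon enlarging $C$, so I assume $\rho \in (0,1)$; and by postcomposing $h$ with the projection $\Pi_{\sqrt{M_0}}$ of Lemma \ref{maximumprinciple} (distance-decreasing, and fixing boundary values since $|v| \leq \sqrt{M_0}$), I may assume $|h| \leq \sqrt{M_0}$.

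With $\eta = \rho$, set $K = \{ r\omega : 0 < r < 1+\eta,\ \omega \in \mathbf{S}^2 \cap B_\rho(b) \}$ and choose a cutoff $\chi \in C^\infty([1,1+\eta];[0,1])$ with $\chi(1) = 1$, $\chi(1+\eta) = 0$, $|\chi'| \leq 2/\eta$. I define
\[
\tilde v(r\omega) =
\begin{cases}
r^\alpha h(\omega) & \text{if } 0 < r \leq 1, \\
P\bigl( r^\alpha[\chi(r) h(\omega) + (1-\chi(r)) v(\omega)] \bigr) & \text{if } 1 \leq r \leq 1+\eta,
\end{cases}
\]
where $P$ is the Lipschitz retraction onto $\mathbf{D}_k$ from Lemma \ref{InteriorEnergyEstimate}. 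On the lateral boundary of $K$, $h(\omega) = v(\omega)$ forces the interpolant to equal $r^\alpha v(\omega) \in \mathbf{D}_k$, so $P$ acts as the identity and $\tilde v(r\omega) = v(r\omega)$; at $r = 1+\eta$, $\tilde v = v((1+\eta)\omega)$; at the vertex $\tilde v = 0 = v(0)$. Hence $\tilde v = v$ on $\partial K$, and by minimality $\int_K |\nabla v|^2\, \mathrm{d}x \leq \int_K |\nabla \tilde v|^2\, \mathrm{d}x$.

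Using the homogeneity identity $|\nabla v|^2(r\omega) = r^{2\alpha-2}(\alpha^2|\varphi(\omega)|^2 + |\nabla_{\mathrm{tan}}\varphi(\omega)|^2)$ with $\varphi = v|_{\mathbf{S}^2}$ and $\mathrm{d}x = r^2\, \mathrm{d}r\, \mathrm{d}\mathcal{H}^2$, this minimality reads
\[
\frac{(1+\eta)^{2\alpha+1}}{2\alpha+1} \int_{\mathbf{S}^2 \cap B_\rho(b)} (\alpha^2|v|^2 + |\nabla_{\mathrm{tan}}v|^2)\, \mathrm{d}\mathcal{H}^2 \leq \frac{1}{2\alpha+1} \int_{\mathbf{S}^2 \cap B_\rho(b)} (\alpha^2|h|^2 + |\nabla_{\mathrm{tan}}h|^2)\, \mathrm{d}\mathcal{H}^2 + E_{\mathrm{trans}}.
\]
Estimating $|\nabla \tilde v|^2 \leq L^2 |\nabla g|^2$ for $g$ the unprojected interpolant and using $|h|, |v| \leq \sqrt{M_0}$ with $|\chi'| \leq 2/\eta$ gives
\[
E_{\mathrm{trans}} \leq C_\alpha L^2 \Bigl( \frac{M_0 \rho^2}{\eta} + \eta M_0 \rho^2 + \eta \int_{\mathbf{S}^2 \cap B_\rho(b)} (|\nabla_{\mathrm{tan}}h|^2 + |\nabla_{\mathrm{tan}}v|^2)\, \mathrm{d}\mathcal{H}^2 \Bigr).
\]
With $\eta = \rho$, $(1+\eta)^{2\alpha+1} \geq 1$, and $\alpha^2 \int (|h|^2 - |v|^2)\, \mathrm{d}\mathcal{H}^2 \leq C\alpha^2 M_0 \rho^2$, rearrangement yields the claim up to a factor $C\rho \int (|\nabla_{\mathrm{tan}}h|^2 + |\nabla_{\mathrm{tan}}v|^2)$, which I dispose of by a dichotomy: either $\int |\nabla_{\mathrm{tan}}h|^2 \geq \int |\nabla_{\mathrm{tan}}v|^2$ (done with $C = 0$), or both are bounded by $E_0$, in which case $C\rho$ times either is absorbed into the $O(\rho)$ error. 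The $\lambda v$ scaling is immediate because every term in the argument depends quadratically on the map.

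The hard part is the construction of a valid $\mathbf{D}_k$-valued competitor: the linear combination $\chi h + (1-\chi) v$ generally leaves the cone, so it must be composed with the Lipschitz retraction $P$, whose natural domain excludes a $1$-dimensional polygon $X \subset \mathbf{R}^4$. A small generic perturbation (translating $X$ and then taking limits) or, equivalently, replacing the $\mathbf{R}^4$-linear interpolation by a geodesic interpolation in the cone metric on $\mathbf{D}_k$ (well-defined away from the vertex) handles this technicality without changing the energy estimate above.
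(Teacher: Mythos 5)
Your proof is correct, and it rests on the same core mechanism as the paper's: compare $v$ on a conical domain over $\mathbf{S}^2 \cap B_{\rho}(b)$ with a competitor obtained by extending $h$ homogeneously of degree $\alpha$, observe that the resulting main terms differ only by $\alpha^2 \int \left ( |h|^2 - |v|^2 \right )$, which is $O \left ( M_0 \rho^2 \right )$ since the cap has area $O \left ( \rho^2 \right )$, and pay an $O(\rho)$ price for gluing the competitor to $v$ along the rest of the boundary. Where the two arguments differ is in the gluing. The paper works on the annular cone $\left \{ tx \, : \, t \in [1/2 - \rho, 3/2 + \rho] \right \}$, stays away from the vertex, and fills the two thin end-slabs with energy-minimizing $\mathbf{D}_k$-valued extensions whose energy is bounded by $\rho$ times boundary energies via the slicing argument of \cite[Lemma 4.1]{SU82}; this outsources the problem of producing a cone-valued interpolant to the extension machinery of \cite[Theorem 6.2]{HardtLin87}. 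You instead take the full truncated cone down to the vertex, where no gluing is needed because both maps vanish there and the lateral traces already agree, and use a single explicit cutoff interpolation in a collar of width $\rho$, repaired by the retraction $P$ of Lemma \ref{InteriorEnergyEstimate}. This buys you one transition region instead of two and a more explicit error estimate, at the cost of having to address that $P$ is only locally Lipschitz off the one-dimensional polygon $X$; your proposed fix (a generic translate of $X$, exactly as in the proof of \cite[Theorem 6.2]{HardtLin87} and as the paper itself does in Lemma \ref{compactness}) is the standard one and closes that point, so I would lead with it rather than the geodesic-interpolation alternative, whose Lipschitz dependence on endpoints in the positively curved case $k>1$ would need its own justification. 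The remaining bookkeeping --- the reduction to $\rho$ small and $|h|$ bounded by $\| v \|_{L^{\infty}}$, the dichotomy that absorbs $\rho \int \left ( \left | \nabla_{\mathrm{tan}} h \right |^2 + \left | \nabla_{\mathrm{tan}} v \right |^2 \right )$ when $h$ has less energy than $v$, and the quadratic $\lambda^2$ scaling --- matches the paper's.
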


\begin{proof}
Note that the claim holds trivially when $\rho \geq 1/2$, or when
\begin{equation*}
\int_{\mathbf{S}^2 \cap B_{\rho}(b)} \left | \nabla_{\mathrm{tan}} h \right |^2 \, \mathrm{d}\mathcal{H}^2 \geq 
\int_{\mathbf{S}^2 \cap B_{\rho}(b)} \left | \nabla_{\mathrm{tan}} v \right |^2 \, \mathrm{d}\mathcal{H}^2.
\end{equation*}
Hence, we assume $\rho < 1/2$, and
\begin{equation*}
\int_{\mathbf{S}^2 \cap B_{\rho}(b)} \left | \nabla_{\mathrm{tan}} h \right |^2 \, \mathrm{d}\mathcal{H}^2 < 
\int_{\mathbf{S}^2 \cap B_{\rho}(b)} \left | \nabla_{\mathrm{tan}} v \right |^2 \, \mathrm{d}\mathcal{H}^2 \leq E_0.
\end{equation*}
By Lemma \ref{maximumprinciple} we may also assume that $| h | \leq M_0$ on $\mathbf{S}^2 \cap B_{\rho}(b)$.

Recall that $\alpha > 0$ by Lemma \ref{holderestimate}. Consider $\tilde{h}$, the extension of $h$ to the conical domain
\begin{equation*} 
C_{\rho} = \left \{ (t,tx) \in \mathbf{R} \times \mathbf{R}^3 \, : \, t \in [- \rho + 1/2,3/2 + \rho], \; x \in \mathbf{S}^2 \cap B_{\rho}(b) \right \},
\end{equation*}
as follows:
\begin{equation}
\tilde{h}(t,x) = \left \{
\begin{aligned}
t^{\alpha} h(x), \quad  & t \in [1/2, 3/2], \, x \in \mathbf{S}^2 \cap B_{\rho}(b), \\
h_1(t,x), \quad & t \in [-\rho + 1/2, 1/2], \, x \in \mathbf{S}^2 \cap B_{\rho}(b), \\
h_2(t,x), \quad & t \in [3/2, 3/2 + \rho], \, x \in \mathbf{S}^2 \cap B_{\rho}(b), \\ 
\end{aligned}
\right .
\label{extension}
\end{equation}
where $h_1$ minimizes the Dirichlet energy in its domain of definition with respect to the boundary data $(1/2)^\alpha h$ on $C_{\rho} \cap \{ t = 1/2 \}$
and $v$ elsewhere, and likewise $h_2$ minimizes the Dirichlet energy in its domain of definition with respect to the boundary data $(3/2)^\alpha h$ on $C_{\rho} \cap \{ t = 3/2 \}$
and $v$ elsewhere.
 
We compute:
\begin{equation*}
\begin{split}
\int_{C_\rho} | \nabla \tilde{h} |^2 \, \mathrm{d}x = 
\int_{C_\rho \cap \left \{ t \in [-\rho + 1/2, 1/2] \right \} } | \nabla h_1 |^2 \, \mathrm{d}x +
\int_{C_\rho \cap \left \{ t \in [3/2, 3/2 + \rho] \right \} } | \nabla h_2 |^2 \, \mathrm{d}x \; + \\
\int_{1/2}^{3/2} \alpha^2 t^{ 2 \alpha} \left ( \int_{\mathbf{S}^2 \cap B_{\rho}(b)} | h |^2 \, \mathrm{d}\mathcal{H}^2 \right ) \, \mathrm{d}t +
\int_{1/2}^{3/2} t^{2\alpha}  \left ( \int_{\mathbf{S}^2 \cap B_{\rho}(b)} \left | \nabla_{\mathrm{tan} } h  \right |^2 \, \mathrm{d}\mathcal{H}^2 \right ) \, \mathrm{d}t. \\
\end{split}
\end{equation*}
If \eqref{almostminimality} fails, then we have:
\begin{equation}
\begin{split}
\int_{C_\rho} | \nabla \tilde{h} |^2 \, \mathrm{d}x & < 
\int_{C_\rho} | \nabla v |^2 \, \mathrm{d}x + 
\int_{C_\rho \cap \left \{ t \in [-\rho + 1/2, 1/2] \right \} } | \nabla h_1 |^2 \, \mathrm{d}x \\
& + \int_{C_\rho \cap \left \{ t \in [3/2, 3/2 + \rho] \right \} } | \nabla h_2 |^2 \, \mathrm{d}x + 
A_1(\alpha) M_0 \rho
- C A_2 (\alpha) \rho.
\end{split}
\end{equation}
Since there exists a bi-Lipschitz homeomorphism from $\partial C_\rho$ to $S^2$, which extends to a bi-Lipschitz map from $C_\rho$ to $B^3_1(0)$, 
following the argument in the proof of \cite[Lemma 4.1]{SU82}, we obtain:
\begin{equation*}
\begin{split}
\int_{C_\rho \cap \left \{ t \in [-\rho + 1/2, 1/2] \right \} } | \nabla h_1 |^2 \, \mathrm{d} \mathcal{H}^2
\leq \rho  \int_{(-\rho + 1/2) \left ( S^2 \cap B_\rho(b) \right ) } | \nabla_{\mathrm{tan}} v |^2 \, \mathrm{d}x \; + \\
\rho \int_{(1/2) \left ( S^2 \cap B_\rho(b) \right)} | \nabla_{\mathrm{tan}} h_1 |^2 \, \mathrm{d}\mathcal{H}^2 + 
\rho \int_{-\rho + 1/2}^{1/2} \int_{t \left ( S^2 \cap\partial  B_\rho(b) \right )} | \nabla_{\mathrm{tan}} v |^2 \, \mathrm{d}\mathcal{H}^1 \, \mathrm{d}t, 
\end{split}
\end{equation*}
and consequently:
\begin{equation*}
\int_{C_\rho \cap \left \{ t \in [-\rho + 1/2, 1/2] \right \} } | \nabla h_1 |^2 \, \mathrm{d} \mathcal{H}^2 < A_3 \left ( E_0  \right ) \rho,
\end{equation*}
where we note that $A_3$ depends on $E_0$ linearly.
Clearly, the same bound holds for $h_2$ as well. Thus, we obtain:
\begin{equation}
\int_{C_\rho} | \nabla \tilde{h} |^2 \, \mathrm{d}x < \int_{C_\rho} | \nabla v |^2 \, \mathrm{d}x + 2 A_3 \left ( E_0  \right ) \rho + A_1(\alpha) M_0 \rho
- C A_2 (\alpha) \rho.
\label{contramin}
\end{equation}
Hence choosing $C$ large enough with respect to $\alpha$ and $E_0$ in \eqref{contramin}, we get:
\begin{equation*}
\int_{C_\rho} | \nabla \tilde{h} |^2 \, \mathrm{d}x < 
\int_{C_\rho} | \nabla v |^2 \, \mathrm{d}x,  
\end{equation*}
which contradicts the minimality of $v$ on the conical domain $C_\rho$ and which establishes the claim. 

Finally note that $A_3 \left (E_0 \right )$ and $M_0$ would be replaced with $\lambda^2 A_3 \left (E_0 \right ) $ and $\lambda^2 M_0$ respectively, 
if we replace $v$ with $\lambda v$. Hence, replacing $C$ with $\lambda^2 C$ would give the estimate \eqref{almostminimality} for $\lambda v$.
\end{proof}

Finally, we discuss the zero sets of homogenenous minimizers on $\mathbf{R}^3$ in the spirit of \cite[Theorem 3.1]{HardtLin90}.

\begin{lemma} \label{3DHomogeneous}
There exist positive constants $N_0$, $d_0$, $C$ so that for any non-constant homogeneous energy-minimizing map $v: \mathbf{R}^3 \to \mathbf{D}_k$,
$v^{-1}\{ 0 \} \cap \mathbf{S}^2$ consists of $2m$ points separated by distances at least $d_0$, where $2m \leq K_0$,
where both $d_0$ and $K_0$ depend on $N_v \left ( 0; 0^+ \right )$. Near each $a \in v^{-1}\{ 0 \} \cap \mathbf{S}^2$, the following asymptotic estimate holds:
\begin{equation}
\left | v(x) - w_a \circ p_a (x-a) \right | \leq C \left ( |x-a|^{1/2 \sqrt{k}} \right ),
\label{asymptoticEstimate} 
\end{equation}
for some two-dimensional minimizer $w_a$ and orthogonal projection $p_a \, : \, \mathbf{R}^3 \to \mathbf{R}^2$ with $p_a(a) = 0$. 
Furthermore, for $\epsilon > 0$,
there exist $\beta = \beta \left ( \epsilon, N \left (0; 0^+ \right ) \right )> 0$ and $\gamma = \gamma \left ( \epsilon, N \left ( 0; 0^+ \right ) \right ) > 0$ such that:
\begin{equation}
N_v \left ( b ; r \right ) \leq \frac{1}{2 \sqrt{k} } + \epsilon,
\label{frequencyperturbation}
\end{equation}
whenever $ \left | a - \frac{b}{|b|} \right | < \beta$ and
$r \in (0, |b| \gamma ]$ for some $a \in v^{-1} \{0\} \cap \mathbf{S}^2$.  
\end{lemma}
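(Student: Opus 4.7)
The plan is to first identify the tangent map structure at each zero on $\mathbf{S}^2$, which simultaneously pins down the frequency and the local shape, and then to deduce the remaining structural claims from this $2$-D model. Since $v$ is homogeneous of degree $\alpha = N_v(0;0^+)$ with respect to the origin, $v^{-1}\{0\}$ is a cone, and it suffices to analyze $Z = v^{-1}\{0\} \cap \mathbf{S}^2$.

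Fix $a \in Z$ and extract via Lemma \ref{existenceHblowups} a blow-up limit $\varphi$ corresponding to scales $r_i \downarrow 0$ at $a$. The key observation is that the global homogeneity of $v$ forces $\varphi$ to be translation-invariant in the direction $e_a = a/|a|$: for the normalized sequence $u_i(x) = c_i^{-1} v(a + r_i x)$, the identity $v(a + r_i x + r_i t e_a) = (1 + r_i t)^{\alpha} v\bigl(a + \tfrac{r_i}{1 + r_i t} x\bigr)$, followed by dividing by $c_i$ and letting $r_i \downarrow 0$, yields $\varphi(x + t e_a) = \varphi(x)$. Hence $\varphi = w_a \circ p_a$, where $p_a : \mathbf{R}^3 \to e_a^{\perp}$ is orthogonal projection and $w_a : e_a^{\perp} \to \mathbf{D}_k$ is a nontrivial homogeneous minimizer in two variables. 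By Lemma \ref{lowerHomogeneous}, $w_a$ must be of the form \eqref{2Dhminimizer} and has degree $\tfrac{1}{2\sqrt{k}}$, so $N_v(a;0^+) = \tfrac{1}{2\sqrt{k}}$. The expansion \eqref{asymptoticEstimate} then follows by combining this identification with the uniform regularity of Lemma \ref{holderestimate} and the frequency monotonicity of Lemma \ref{monotonocity}, which together yield the rate of convergence of the blow-up.

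For the discreteness assertions, \eqref{asymptoticEstimate} locally constrains $v^{-1}\{0\}$ near $a$ to the $e_a$-axis (since $w_a \circ p_a$ vanishes only there), yielding both isolation on $\mathbf{S}^2$ and, by a compactness argument over the class of homogeneous minimizers of degree $\alpha$, a uniform lower bound $d_0$ on pairwise distances. Each zero contributes a fixed positive amount of angular energy through the $2$-D model, while the total energy $E_0 = \int_{\mathbf{S}^2} |\nabla_{\mathrm{tan}} v|^2$ is controlled by $\alpha$ and $M_0$ through Lemma \ref{monotonocity} and Lemma \ref{maximumprinciple}, giving the bound $2m \leq K_0$. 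For \eqref{frequencyperturbation}, the scale invariance $N_v(\lambda b; \lambda r) = N_v(b;r)$ coming from the homogeneity of $v$ reduces matters to showing that $N_v(a'; r'') \leq \tfrac{1}{2\sqrt{k}} + \epsilon$ whenever $|a - a'| < \beta$ and $r'' \leq \gamma$. Arguing by contradiction, pick sequences $a'_j \to a$ and $r''_j \downarrow 0$ with $N_v(a'_j; r''_j) > \tfrac{1}{2\sqrt{k}} + \epsilon$, and split into three cases according to $d_j/r''_j$ with $d_j = |a'_j - a|$: if $d_j/r''_j$ tends to $0$ or a finite positive constant, blow up at $a$ on scale $r''_j$, identify the limit as $w_a \circ p_a$ via \eqref{asymptoticEstimate}, and invoke the fact that $N_{w_a \circ p_a}(\,\cdot\,;\,\cdot\,) \leq \tfrac{1}{2\sqrt{k}}$ everywhere (from monotonicity together with the homogeneity of $w_a \circ p_a$ giving $\lim_{R \to \infty} N_{w_a \circ p_a}(y;R) = \tfrac{1}{2\sqrt{k}}$); if $d_j/r''_j \to \infty$, blow up instead on scale $d_j$ to land at a unit vector $u_\infty \in \mathbf{S}^2$ with $p_a(u_\infty) \neq 0$, where the model is smooth and nonzero, forcing the frequency at the shrinking scale $r''_j/d_j$ to vanish in the limit.

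The main technical obstacle is producing the translation invariance of $\varphi$ quantitatively enough, and securing the uniformity of \eqref{asymptoticEstimate} in a form strong enough to drive both the cardinality bound $K_0$ and the three-case compactness argument for \eqref{frequencyperturbation}. In particular, since $C$, $d_0$, $K_0$ must depend only on $N_v(0;0^+)$ and not on $v$ itself, the required uniformity will be extracted by running the blow-up/compactness procedure on rescaled versions of $v$ and invoking Lemma \ref{compactness} together with Lemma \ref{existenceHblowups}.
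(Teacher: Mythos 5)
Your identification of the tangent maps at points of $v^{-1}\{0\}\cap\mathbf{S}^2$ as cylindrical maps $w_a\circ p_a$, via the translation invariance forced by the global homogeneity of $v$, is exactly the paper's argument, and your reduction of \eqref{frequencyperturbation} by the scaling $N_v(\lambda b;\lambda r)=N_v(b;r)$ is also on track (the paper reaches the same conclusion more directly, by combining $N_v(a;0^+)=\tfrac{1}{2\sqrt k}$ with monotonicity to fix $\gamma$ and then the upper semicontinuity \eqref{uppersemicontinuity} of $N(\cdot;\gamma)$ to fix $\beta$, followed by a compactness argument for uniformity; your three-case blow-up analysis is a workable alternative provided you use that every subsequential tangent map at $a$ has the same axis $p_a^{-1}\{0\}$). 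However, there is one genuine gap: nothing in your proposal addresses the claim that the cardinality of $v^{-1}\{0\}\cap\mathbf{S}^2$ is \emph{even}. Your energy-counting (or the simpler packing bound $K_0=C/d_0^2$ that $d_0$ already gives you for free) only bounds the number of zeros from above; it carries no parity information. The evenness is a topological fact: composing $v$ with the projection $P:\mathbf{D}_k\setminus\{0\}\to\mathbf{RP}^2$, each zero $a$ contributes the nontrivial class of $\Pi_1(\mathbf{RP}^2)\cong\mathbf{Z}_2$ on a small circle around it (because the tangent map is the half-integer-degree map \eqref{2Dhminimizer}), and since $\mathbf{S}^2$ is simply connected the sum of these classes over all zeros must vanish, forcing an even count. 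This argument, or some substitute for it, must appear; it cannot be extracted from frequency or energy considerations.

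Two further points are weaker than they should be. First, you describe \eqref{asymptoticEstimate} as coming from "the rate of convergence of the blow-up"; no such rate is available at this stage (that is precisely the content of the subsequent remark, requiring integrability of Jacobi fields). The estimate as stated is only a two-sided growth bound: by the triangle inequality it reduces to $\sup_{x\neq a}|v(x)|/|x-a|^{1/2\sqrt k}<C$, which the paper obtains by contradiction from \eqref{Hcomparison}, \eqref{subharmonicaverage} and \eqref{almgrenmonotonicity} — you should argue this way rather than invoke a convergence rate. Second, your compactness argument for $d_0$ omits the mechanism of the contradiction: when two zeros $a_i,b_i$ coalesce, the blow-up at $b_i$ at scale $|a_i-b_i|$ produces a homogeneous limit that vanishes both on the axis direction (by translation invariance) and on an orthogonal ray (the limiting separation direction), hence on a $2$-plane, contradicting the Hausdorff dimension bound of Lemma \ref{Hdimension}. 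Without naming this, the claim that zeros cannot coalesce is unsupported. Relatedly, your $K_0$ bound via "a fixed positive amount of angular energy per zero" is delicate because neither $E_0$ nor the coefficient of the local $2$-D model is invariant under rescaling $v$ by constants; the packing bound from $d_0$ avoids this entirely.
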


\begin{proof}
For $a \in v^{-1}\{ 0 \} \cap \mathbf{S}^2$, consider a tangent map $v_0$ at $a$. Then there exists a blow-up sequence $v_{\lambda_i}$ at $a$, 
defined as in Lemma \ref{existenceHblowups}, such that $v_0$ is the limit of $v_{\lambda_i}$, and it is a homogeneous energy-minimizing map by Lemmas \ref{compactness} 
and \ref{existenceHblowups}.

The homogeneity of $v$ and $v(a) = 0$ together imply that each $v_{\lambda_i}(x) = 0$, whenever 
$x \in \mathrm{span} \left ( \{ a \} \right ) \cap B_{1/ \lambda_i}(0)$. Hence, by the local uniformity of the convergence, we have
$v_0 \equiv 0$ on $\mathrm{span} \left ( \{ a \} \right )$. Furthermore, choosing Euclidean coordinates centered at $a$ so that $x^1$ is radial
at $a$ we have $v_0$ independent of $x^1$. To see this fact, note that for $x,y \in \mathbf{R}^3$ such that $x-y = \mu a$, we have:
\begin{equation*}
\begin{aligned}
v_0 (x) = \lim_{\lambda_i \downarrow 0} \frac{ v \left ( a + \lambda_i x \right ) }{\left ( \fint_{B_{\lambda_i}(a)} |v|^2 \, \mathrm{d}x \right )^{1/2}}
& = \lim_{\lambda_i \downarrow 0} \frac{ v \left ( (1+ \lambda_i \mu)a + \lambda_i y \right ) }{\left ( \fint_{B_{\lambda_i}(a)} |v|^2 \, \mathrm{d}x \right )^{1/2}} \\
& = \lim_{\lambda_i \downarrow 0} \left [ \left (1+ \lambda_i \mu \right )^{\alpha} \frac{ v \left ( a + \frac{\lambda_i}{1+ \lambda_i \mu} y \right ) }
{\left ( \fint_{B_{\lambda_i}(a)} |v|^2 \, \mathrm{d}x \right )^{1/2}} \right ] \\
& = \lim_{\lambda_i \downarrow 0} \left [ (1+ \lambda_i \mu)^{\alpha}  v_{\lambda_i} \left ( \frac{1}{1 + \lambda_i \mu } y \right ) \right ] = v_0(y).
\end{aligned}
\end{equation*}
Here the third equality is due to the $\alpha$-homogeneity of $v$ for some $\alpha > 0$, as $\left ( 1+ \lambda_i \mu \right ) > 0$ for $i$ large enough.
The last equality follows from the locally uniform convergence and the uniform H\"older estimate in Lemma \ref{holderestimate}. 
Thus, we conclude that any tangent map of $v$ at $a$ is of the form $w_a \circ p_a$, for some
$w_a$, a homogeneneous energy-minimizer defined on $\mathbf{R}^2$ and orthogonal projection $p_a : \mathbf{R}^3 \to \mathbf{R}^2$ with $p_a(a) = 0$.

By the triangle inequality:
\begin{equation*}
\left | v(x) - w_a \circ p_a (x-a) \right | \leq |x-a|^{1/2\sqrt{k}} \left ( \frac{ \left | v(x) \right | }{|x-a|^{1/2\sqrt{k}}} + h_a \left ( \frac{x-a}{|x-a|} \right ) \right ),  
\end{equation*}
where $w_a \circ p_a (z) = |z|^{1/2\sqrt{k}} h \left ( z / |z| \right )$. Since $|h| \leq M$, it suffices to show that in a neighborhood of $a$:
\begin{equation*}
\sup_{x \neq a} \frac{ \left | v(x) \right | }{|x-a|^{1/2\sqrt{k}}} < C.
\end{equation*}
If this claim is false, then there exist $x_i = a + r_i \omega_i$ for $\left | \omega_i \right | = 1$, $r_i \to 0$, as $i \to \infty$ such that:
\begin{equation}
\lim \sup_{i \to \infty} \frac{ \left | v \left ( a + r_i \omega_i \right ) \right | }{ r_i^{1/2\sqrt{k}}} = \infty.
\label{suboptimalEst}
\end{equation}
Defining:
\begin{equation*}
v_i(x) = \left ( \fint_{B_{r_i}(a)} |v|^2 \right )^{-1/2} v \left ( a + r_i x \right ), 
\end{equation*}
for a subsequence, by \eqref{existenceHblowups}, $v_i$ converges to a minimizer $\tilde{w}$, homogeneous of degree $1/2\sqrt{k}$. 
In particular, $\left | v_i (x) \right | \leq C$ for $i$ large.
Thus, choosing $x = \omega_i$:
\begin{equation*}
\frac{v \left ( a + r_i \omega_i \right )}{r_i^{1/2\sqrt{k}}} \leq C \left ( \frac{1}{r_i^{3+1/\sqrt{k}}} \int_{B_{r_i}(a)} |v|^2 \, \mathrm{d}x  \right )^{1/2}.
\end{equation*}
However, using \eqref{Hcomparison}, \eqref{subharmonicaverage} and \eqref{almgrenmonotonicity}, we estimate for $r_i < 1$:
\begin{equation*}
\frac{1}{r_i^{3+ \frac{1}{\sqrt{k}}}} \int_{B_{r_i}(a)} |v|^2 \, \mathrm{d}x = \frac{1}{r_i^{3+ \frac{1}{\sqrt{k}}}} \int_0^{r_i} H(a;s) \, \mathrm{d}s
\leq \frac{H(a;1)}{r_i^{3+ \frac{1}{\sqrt{k}}}} \int_0^{r_i} s^{2 + \frac{1}{\sqrt{k}}} \, \mathrm{d}s = \frac{H(a;1)}{3 + \frac{1}{\sqrt{k}}}.
\end{equation*}
But then \eqref{suboptimalEst} cannot be true, and \eqref{asymptoticEstimate} is proved. 

To verify that the finite set $A$ of zeroes of $v|_{S^2}$ has even cardinality, we will use the continuous mapping
\begin{equation*}
(P\circ v)|_{{\mathbf S}^2\setminus A}\ :\ {\mathbf S}^2\setminus A\ \to\ \mathbf{RP}^2,
\end{equation*}
where the projection  $P: {\mathbf D}_k\setminus \{0\}\to \mathbf{RP}^2$ is given by $P\left(\rho \left( \sqrt{k-1},[\xi] \right)\right) = [\xi ]$ for $\rho > 0$ and $\xi\in {\mathbf S}^2$.
Recalling  the formula $v_0=w_a\circ p_a$ for the tangent map at any $a\in A$, we see that, 
by restricting to a sufficiently small circle ${\bf S}^2\cap\partial{\bf B}_\varepsilon (a)$, 
the resulting homotopy class $\left[ (P\circ v)|_{{\mathbf S}^2\cap\partial{\mathbf B}_\varepsilon (a)} \right]$ is nonzero in  $\Pi_1(\mathbf{RP}^2)\cong{\mathbf Z}_2$.  
Since the whole sphere ${\mathbf S}^2$ is simply-connected, it is well-known from topology that: 
\begin{equation}
\sum_{a\in A}\left[ (P\circ v)|_{{\mathbf S}^2\cap\partial B_\varepsilon (a)} \right]\ 
=\ 0\in\Pi_1(\mathbf{ RP}^2)\cong{\mathbf Z}_2\ ; \label{HomotopyConstruction}
\end{equation}
hence, the cardinality of $A$ is even.
To verify \eqref{HomotopyConstruction}, one may, for example, find an isotopy of ${\mathbf S}^2$ that moves $A$ to  a finite subset of the equator. 
By joining each small circle ${\mathbf S}^2\cap\partial B_\varepsilon (a)$ to the north pole and back again along a logitudinal arc, 
one constructs a single parameterized loop that is homotopic in ${\mathbf S}^2\setminus A$ to the sum of these small circles. 
The resulting loop may then be homotoped in ${\mathbf S}^2\setminus A$ to the constant south pole map. 
Composing these homotopies with $P\circ v$ then gives \eqref{HomotopyConstruction}.

In order to obtain $d_0$, we argue by compactness. If there exists no such $d_0$, we can find a sequence of homogeneous minimizing-maps
$v_i \, : \mathbf{R}^3 \to \mathbf{D}_k$ with uniformly bounded Almgren frequencies $N_{i} \left ( 0 ; 0^+ \right ) \leq A_0$
and corresponding distinct points $a_i, b_i \in v_i^{-1}\{ 0 \} \cap \mathbf{S}^2$ such that
$2 r_i = \left | a_i - b_i \right | \to 0$, as $i \to \infty$. Composing each map $v_i$ with an appropriate rotation, we may assume
$b_i = (0,1) \in \mathbf{R}^2 \times \mathbf{R}$. 

We define $w_i \, : \, B_1(0) \to \mathbf{D}_k$ as:
\begin{equation*}
 w_i(x) = \left ( \fint_{B_{r_i}\left ( (0,1) \right )} \left | v_i \right |^2 \, \mathrm{d}x \right )^{-1/2} v_i \left ( (0,1) + r_i x \right ) .
\end{equation*}
Once we verify that after passing to a subsequence $w_i$ converges to a homogeneous energy-minimizing map $w$ strongly and uniformly on $B_1(0)$, 
we will obtain that $w(0) = 1$, as well as $ w \left ( x_\infty \right ) = 0$, where $x_i = r_i^{-1} \left (a_i - (0,1) \right ) \to x_\infty$
and $\left | x_\infty \right | = 1/2$ by construction. 
Hence, by the homogeneity of $w$ at $0$, $w$ vanishes on the ray containing $0$ and $x_\infty$. 
On the other hand, by the above argument $w$ is independent of the $x_3$-direction, which is radial at $(0,1)$. 
However, the ray containing $0$ and $x_\infty$ is orthogonal to the $x_3$-direction, since $\left | x_i \right | = 1/2$ and
$\left \langle x_i , (0,1) \right \rangle \to 0$ as $i \to 0$. 
Therefore, $w^{-1} \{0 \}$ has positive $\mathcal{H}^2$-measure, contradicting Lemma \ref{Hdimension}.

By the proof of Lemma \ref{existenceHblowups}, each $w_i$ is bounded in $H^1 \left ( B_1 (0) \right )$ by a constant depending on $N_{v_i} \left ( 0; 0^+ \right )$.
Hence, the uniform bound on the frequencies of $v_i$ at $0$ yields a uniform bound on the $H^1$-norms of $w_i$, and by Rellich's theorem a subsequence of
$w_i$ converge weakly in $H^1$ and strongly in $L^2$ to a map $w$. 
Moreover, $\left \| w_i \right \|_{L^2 \left ( B_1(0) \right )} = 1$ and Lemma \ref{holderestimate} together imply uniform convergence. 

Finally, strong convergence in $H^1 \left ( B_1 (0) \right )$ follows from the Caccioppoli inequality \eqref{caccioppoli}.
In order to see that \eqref{caccioppoli} holds for $w_i$ with a uniform constant $C$, we derive a uniform $L^{\infty}$-bound on $w_i$.
Reexpressing $w_i$, we have:
$$
\sup_{z \in B_2(0)} \left | w_i(z) \right |^2 = \sup_{y \in B_{2r_i}((0,1))} \left | v_i(y) \right |^2 \cdot \left [ \fint_{B_{r_i}((0,1))} \left | v_i \right |^2 \, \mathrm{d}x \right ]^{-1}.
$$
By the subharmonicity of $\left | v_i \right |^2$:
$$
\sup_{z \in B_2(0)} \left | w_i(z) \right |^2 
\leq \sup_{y \in B_{2r_i}((0,1))} \fint_{B_{\delta r_i}(y)} \left | v_i \right |^2 \, \mathrm{d}x \cdot \left [ \fint_{B_{r_i}((0,1))} \left | v_i \right |^2 \, \mathrm{d}x \right ]^{-1}.
$$
By inclusion and \eqref{Hcomparison}:
\begin{equation*}
\begin{aligned}
\sup_{z \in B_2(0)} \left | w_i(z) \right |^2
& \leq \left ( \frac{2r_i + \delta r_i}{\delta r_i} \right )^3 \left ( \frac{  \fint_{B_{2r_i + \delta r_i}((0,1))}
\left | v_i \right |^2 \, \mathrm{d}x }{ \fint_{B_{r_i}((0,1))} \left | v_i \right |^2 \, \mathrm{d}x } \right ) \\
& \leq \delta^{-3} \left ( \frac{2r_i + \delta r_i}{r_i} \right )^{3 + 2 N_i \left ( (0,1) ; 2r_i + \delta r_i \right )} \\
& \leq \delta^{-3} \left ( 2 + \delta \right )^{3 + 2 N_i \left ( (0,1) ; 2r_i + \delta r_i \right )}.
\end{aligned}
\end{equation*}
Finally, the homogeneity of $v_i$, the local frequency estimate \eqref{localFcontrol}, and the uniform bound on $N_{v_i} \left ( 0; 0^+ \right )$ give:
\begin{equation*}
2 N_i \left ( (0,1) ; 2r_i + \delta r_i \right ) \leq C_1 N_i \left ( 0; 0^+ \right ) + C_2 \leq C_1 A_0 + C_2.
\end{equation*}
Hence, we obtain a uniform $L^{\infty}$-bound on $w_i$, which depends on a fixed number $\delta > 0$ and the frequency bound $A_0$. 

Hence, the proof that there exists a minimal distance $d_0$, depending on $N_v \left ( 0 ; 0^+ \right )$ is complete.
Therefore, there also exists an upper bound $K_0 = C/d_0^2$ for the cardinality of $v^{-1} \{ 0 \} \cap \mathbf{S}^2$, for a uniform constant factor $C > 0$.

Finally, to prove \eqref{frequencyperturbation}, it follows from Lemma \ref{existenceHblowups} and \eqref{asymptoticEstimate} that for every
$a \in v^{-1} \{ 0 \} \cap \mathbf{S}^2$, we have $N_v \left ( a; 0^+ \right ) = \frac{1}{2 \sqrt{k}}$. First choose $\delta = \delta(\epsilon) < d_0 /4$ such that by \eqref{almgrenmonotonicity}:
\begin{equation*}
N_v (a ; \gamma ) \leq \frac{1}{2 \sqrt{k} } + \frac{\epsilon}{2}.
\end{equation*}
Then by the upper-semicontinuity of $N_v(\, \cdot \, ; \gamma )$, as proved in \eqref{uppersemicontinuity}, we can choose
$\beta < d_0 / 8$ such that:
\begin{equation*}
N_v \left ( c ; \gamma \right ) < \frac{1}{2 \sqrt{k} } + \epsilon,
\end{equation*}
whenever $ | c - a | < \beta$. 
Letting $c = |b|^{-1} b $, scaling and using the homogeneity of $v$ around $0$, the estimate in \eqref{frequencyperturbation} follows for $\beta$ and $\gamma$ possibly depending on $v$ as well. 

In order to prove that $\beta$ and $\gamma$ can be chosen to depend on $\epsilon$ and $N_v \left ( 0; 0^+ \right )$ only, we argue by contradiction. 
Let $\rho > 0$ be arbitrary. 
If the claim fails, for some $\epsilon_0 > 0$, 
there exist $v_i$ with $N_{v_i} \left ( 0; 0^+ \right ) < M$, $b_i \in \overline{B}_1(0) \backslash B_{\rho} (0)$ and $\gamma_i$ such that after composing each map with a suitable rotation 
$\left | (0,1) - \left | b_i \right |^{-1} b_i \right | \to 0$ and $\gamma \to 0$ as $i \to \infty$, 
while $N_{v_i} \left ( b_i; \gamma_i \left | b_i \right | \right ) > \frac{1}{2 \sqrt{k}} + \epsilon_0$. 
However, we note using the homogeneity of $v_i$ that $\left | b_i \right |^{-1} b_i \in v^{-1} \{ 0 \} \cap \mathbf{S}^2$, 
and likewise we can calculate $N_{v_i} \left ( \left | b_i \right |^{-1} b_i ; \gamma_i \right )$. 
Using the minimality of $d_0$ and the compactness of minimizers, we consider a blow-up sequence at $a = (0,1)$ with blow-up scales $s_i = 2 \left | a - \left | b_i \right |^{-1} b_i \right |$. 
Since $\left | a - c \right | \geq d_0$ for any other $c \in v^{-1} \{ 0 \} \cap \mathbf{S}^2$, the homogeneous blow-up at $a$ is as in \eqref{2Dhminimizer} as before. 
However, the lower bound for $N_{v_i} \left ( \left | b_i \right |^{-1} b_i ; \gamma_i \right )$ gives that the tangent map has a zero on $\partial B_{1/2} (0) \times \{ 0 \} $ as well, 
contradicting Lemma \ref{Hdimension}.
\end{proof}

\begin{remark}
Arguing as in \cite{Simon83}, the asymptotic estimate \eqref{asymptoticEstimate} can be improved to:
\begin{equation*}
\left | v(x) - w_a \circ p_a (x-a) \right | = o \left ( |x-a|^{1/2 \sqrt{k}} \right ) .
\end{equation*}
In particular, the tangent map is unique in this special case. 
The proof involves showing that the Jacobi fields on $S^1$ along a two-dimensional homogeneous minimizer as above are integrable. 
Combining this observation with estimates derived from \eqref{almgrenmonotonicity}, an expansion in terms of the eigenfunctions of the Jacobi operator on $S^1$ and separation of variables,
we can obtain the sharp estimate. See \cite{AUniqueness} for the details. 
\end{remark}

\begin{corollary} \label{charhom}
For any non-constant homogeneous energy-minimizing map $v: \mathbf{R}^3 \to \mathbf{D}_k$, the set $v^{-1}\{0\}$ consists of $2m$ incoming rays
joining at $0$, where $2m \leq K_0 = K_0 \left ( N_v \left (0; 0^+ \right ) \right )$. If $m \geq 1$, 
then $N_v \left (0; 0^+ \right ) \geq 1/ \left ( 2 \sqrt{k} \right )$, where the equality holds 
if and only if $v = w_a \circ p_a$ for a homogeneous minimizer $w_a \, : \, \mathbf{R}^2 \to \mathbf{D}_k$ as in \eqref{2Dhminimizer} 
and an orthogonal projection $p_a \, : \, \mathbf{R}^3 \to \mathbf{R}^2$ with $p_a(a) = 0$.
\end{corollary}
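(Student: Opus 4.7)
The plan is to leverage Lemma \ref{3DHomogeneous} together with the $\alpha$-homogeneity of $v$ at the origin, where $\alpha = N_v(0; 0^+)$.

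For the ray structure, since $v(\lambda x) = \lambda^\alpha v(x)$ for $\lambda > 0$, we have $v(x) = 0$ if and only if $v(x/|x|) = 0$ for $x \neq 0$. Thus $v^{-1}\{0\} \setminus \{0\}$ is a union of open rays from the origin, one per point of $v^{-1}\{0\} \cap \mathbf{S}^2$, and Lemma \ref{3DHomogeneous} bounds the number of such points by $K_0$.

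For the lower bound on $N_v(0; 0^+)$ when $m \geq 1$, fix $a \in v^{-1}\{0\} \cap \mathbf{S}^2$. The proof of Lemma \ref{3DHomogeneous} shows $N_v(a; 0^+) = 1/(2\sqrt{k})$. A direct substitution $x = Ry$ in \eqref{Dirichlet}--\eqref{Almgren}, together with the $\alpha$-homogeneity of $v$, yields the scaling identity $N_v(a; R) = N_v(R^{-1}a; 1)$. Since $v$ is continuous and non-constant, $H_v(0; 1) > 0$, so $N_v(\,\cdot\,; 1)$ is continuous at $0$, and as $R \to \infty$ we get $N_v(a; R) \to N_v(0; 1) = \alpha$. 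Monotonicity \eqref{almgrenmonotonicity} then gives $1/(2\sqrt{k}) = N_v(a; 0^+) \leq \alpha$.

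For the equality case $\alpha = 1/(2\sqrt{k})$, the above chain forces $N_v(a; r)$ to be constant in $r$, so by the rigidity clause in Lemma \ref{monotonocity}, $v$ is $\alpha$-homogeneous at $a$ as well. Equating the two homogeneities, $v((1-\mu)a + \mu x) = \mu^\alpha v(x) = v(\mu x)$ for $\mu > 0$ and $x \neq 0$. Substituting $y = \mu x$ gives $v(sa + y) = v(y)$ for all $s < 1$ and $y \in \mathbf{R}^3 \setminus \{0\}$. The set of such $s$ is an additive subgroup of $\mathbf{R}$ containing an unbounded interval, hence all of $\mathbf{R}$, and by continuity $v$ is invariant under every translation along $\mathbf{R} a$. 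Therefore $v = \tilde{v} \circ p_a$ with $p_a$ the orthogonal projection onto $a^\perp$ and $\tilde{v}: \mathbf{R}^2 \to \mathbf{D}_k$ an $\alpha$-homogeneous map. The main step requiring care is the promotion of $\tilde{v}$ to a genuine energy minimizer on $\mathbf{R}^2$: any compactly supported 2D competitor lifts to a perturbation of $v$ on a thin cylinder $p_a^{-1}(K) \cap \{|x \cdot a| < L\}$ with cut-offs in the $a$-direction, whose energy gain scales linearly in $L$ while the cut-off cost is $O(1)$, contradicting the minimality of $v$ for $L$ large. Lemma \ref{lowerHomogeneous} then identifies $\tilde{v}$ with \eqref{2Dhminimizer} up to rotation and scaling. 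The converse implication is immediate since \eqref{2Dhminimizer} is itself $1/(2\sqrt{k})$-homogeneous.
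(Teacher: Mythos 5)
Your argument is correct and follows the same route as the paper: Lemma \ref{3DHomogeneous} gives the ray structure, the value $N_v \left ( a; 0^+ \right ) = 1/ \left ( 2\sqrt{k} \right )$ at the off-origin zeros combined with frequency monotonicity gives the lower bound, and in the equality case the constancy of $N_v(a;\cdot)$ forces homogeneity at both $0$ and $a$, hence translation invariance along $a$ and the classification \eqref{2Dhminimizer}. The only differences are cosmetic: you replace the paper's appeal to the upper semicontinuity \eqref{uppersemicontinuity} and to \eqref{exactLocalFcontrol} by the equivalent scaling identity $N_v(a;R) = N_v \left ( R^{-1}a; 1 \right )$, and you make explicit, via the cylinder cut-off comparison, the standard fact (left implicit in the paper's reduction to Lemma \ref{lowerHomogeneous}) that the two-dimensional profile of a cylindrical minimizer is itself energy minimizing on $\mathbf{R}^2$.
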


\begin{proof}
Note that Lemma \ref{3DHomogeneous} and the homogeneity of $v$ together determine the structure of $v^{-1}\{0\}$.
The lower bound for $N_v \left (0; 0^+ \right )$ follows from the upper semicontinuity of $N \left ( x; 0^+ \right )$ at every $x \in \mathbf{R}^3$ and
the classification of tangent maps at $a \not = 0$ in Lemmas \ref{2Dhminimizer} and \ref{3DHomogeneous}.
If $v$ depends on two variables, then the equality holds by Lemma \ref{2Dhminimizer}. 
Conversely, by Lemma \ref{monotonocity} and \eqref{exactLocalFcontrol}, the equality $N_v \left (0; 0^+ \right ) = 1/ \left ( 2 \sqrt{k} \right )$ implies
\begin{equation*}
N_v \left ( a ; 0^+ \right ) = N_v \left ( a; \rho \right ), 
\end{equation*}
for every $\rho > 0$, which is equivalent to $v$ being homogeneous at $a$ by Lemma \ref{monotonocity}. We observe that if $v$ is homogeneous
at both $0$ and $a$, then $v$ is independent of $x_3$-variable determined by the ray from $0$ to $a$ and vanishes on $\mathrm{span}\left ( \{ a \} \right )$.
Hence, we have $v = w_a \circ p_a$, where $w_a$ is determined by the formula \eqref{2Dhminimizer} and  $p_a \, : \, \mathbf{R}^3 \to \mathbf{R}^2$ 
is an orthogonal projection with $p_a(a) = 0$.
\end{proof}

\section{Isolated and Non-isolated Defects} \label{Decomposition}

In this section we introduce a decomposition for the defect set $u^{-1} \{ 0 \}$ of an arbitrary, non-constant map $u \, : \, \Omega \to \mathbf{D}_k$
minimizing the modified Ericksen energy.

\begin{definition}
We denote $u^{-1} \{ 0 \} = \mathcal{Z}_0 \cup \mathcal{Z}_1$, where:
\begin{equation}
\mathcal{Z}_0 = \left \{ a \in u^{-1} \{ 0 \} \, : \, u_{\infty}^{-1} \{ 0 \} \cap \mathbf{S}^2 = \emptyset \; \mathrm{for} \; \mathrm{every} \; \mathrm{tangent} \; \mathrm{map}
\; u_{\infty} \; \mathrm{at} \; a \right \},
\end{equation}
and
$\mathcal{Z}_1 = u^{-1} \{ 0 \} \backslash \mathcal{Z}_0$. 
\end{definition}

The main result of this section is that $\mathcal{Z}_1$ can be approximated at each small scale by the defect set of a corresponding homogeneous minimizer as analyzed
in Lemma \ref{3DHomogeneous}, whereas $\mathcal{Z}_0$ consists of isolated points. Firstly,
we prove a lemma that addresses the case where two-dimensional local behavior at a zero of the map $u$, captured by the Almgren frequency as in Corollary \ref{charhom}, 
is perturbed slightly. 

\begin{lemma} \label{nonisolated1}
For every $\epsilon > 0$, there exists a $\delta = \delta ( \epsilon )$ so that if $u \, : \, B_2(0) \to \mathbf{D}_k$ is energy minimizing,
$\left \| u \right \|_{L^\infty \left ( \partial B_2(0) \right ) } \leq M$,
$\left ( \overline{B}_1(0) \backslash B_{\frac{1}{2}}(0) \right ) \cap u^{-1} \{0\} \neq \emptyset$, and
\begin{equation*}
N_u \left ( 0; 2 \right ) < \frac{1}{2 \sqrt{k}} + \delta,
\end{equation*}
then for some $w \, : \, \mathbf{R}^2 \to \mathbf{D}_k$, a homogeneous minimizer on $\mathbf{R}^2$, and some orthogonal projection $p \, : \, \mathbf{R}^3 \to \mathbf{R}^2$,
the following hold:
\begin{equation}
\left \| u - w \circ p \right \|_{H^1 \left ( B_2(0) \right )} < \epsilon, \label{nonisolated1Ea}
\end{equation}
\begin{equation}
\overline{B_1(0)} \cap u^{-1} \{0\} \subset \left \{ x \, : \, \mathrm{dist} \left ( x, p^{-1} \{0\} \right ) < \epsilon \right \},  \label{nonisolated1Eb}
\end{equation}
and for every $z \in [-1,1]$, $ \left ( B_{\epsilon}^3(0) \times \{ z \} \right ) \cap \mathcal{Z}_1 \neq \emptyset$; hence,
\begin{equation}
\overline{B_1(0)} \cap p^{-1} \{ 0 \} \subset \left \{ x \, : \, \mathrm{dist} \left ( x, \mathcal{Z}_1 \right ) < \epsilon \right \}. \label{nonisolated1Ec}
\end{equation}
\end{lemma}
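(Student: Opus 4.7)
The plan is to argue by contradiction and compactness, reducing matters to a rigid classification of the limiting minimizer. Suppose the conclusion fails for some $\epsilon_0 > 0$: there exist sequences $\delta_i \downarrow 0$ and energy-minimizing maps $u_i \, : \, B_2(0) \to \mathbf{D}_k$ satisfying the hypotheses but violating the conclusion, together with $a_i \in \bigl(\overline{B}_1(0) \setminus B_{1/2}(0)\bigr) \cap u_i^{-1}\{0\}$. By Lemma \ref{InteriorEnergyEstimate}, Lemma \ref{holderestimate}, and Lemma \ref{compactness}, after passing to a subsequence we obtain $u_i \to u_\infty$ strongly in $H^1(B_2(0))$ and uniformly on compact subsets; also $a_i \to a$ with $|a| \in [1/2,1]$, $u_\infty(a) = 0$, $u_\infty(0) = 0$ (since the lemma is applied at a point of the defect set $u^{-1}\{0\}$), and $N_{u_\infty}(0;2) \leq 1/(2\sqrt{k})$ from the strong $H^1$-convergence.

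The heart of the argument is identifying $u_\infty = w \circ p$ for some $2$-dimensional homogeneous minimizer $w$ as in Lemma \ref{lowerHomogeneous} and an orthogonal projection $p \, : \, \mathbf{R}^3 \to \mathbf{R}^2$ with $p(a) = 0$. By Lemma \ref{existenceHblowups}, the tangent map $\varphi_0$ of $u_\infty$ at $0$ is a non-trivial homogeneous minimizer of degree $N_{u_\infty}(0;0^+) \leq 1/(2\sqrt{k})$. Using the presence of the far-away zero $a$, the dimension bound in Lemma \ref{Hdimension}, and the upper-semicontinuity of $N(\, \cdot \, ; 0^+)$ from Lemma \ref{monotonocity}, one shows that $\varphi_0$ must have zeros on $\mathbf{S}^2$, whence Corollary \ref{charhom} gives $N_{u_\infty}(0;0^+) \geq 1/(2\sqrt{k})$. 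Equality in the monotonicity formula \eqref{almgrenmonotonicity} then forces $u_\infty$ to be homogeneous of degree $1/(2\sqrt{k})$ at $0$, and the equality case of Corollary \ref{charhom} applied with $u_\infty(a) = 0$ yields $u_\infty = w \circ p$.

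The three conclusions then follow. Conclusion \eqref{nonisolated1Ea} is immediate from strong $H^1$-convergence. Conclusion \eqref{nonisolated1Eb} follows from uniform convergence on $\overline{B}_1(0)$ together with $(w \circ p)^{-1}\{0\} = p^{-1}\{0\}$: any $b_i \in u_i^{-1}\{0\}$ at distance at least $\epsilon$ from $p^{-1}\{0\}$ would produce, via a subsequential limit, a zero of $w \circ p$ off of $p^{-1}\{0\}$, which is impossible. For conclusion \eqref{nonisolated1Ec}, uniform convergence provides for each $z \in [-1,1]$ a zero $b_{i,z}$ of $u_i$ inside the $\epsilon$-neighborhood of the corresponding point on $p^{-1}\{0\}$; the frequency-perturbation estimate \eqref{frequencyperturbation} applied to $u_\infty = w \circ p$, transported to $u_i$ via the closeness of $u_i$ and $u_\infty$ in $C^0$ and in $H^1$, gives $N_{u_i}(b_{i,z}; r) \leq 1/(2\sqrt{k}) + \epsilon'$ at small scales, so that by Lemma \ref{existenceHblowups} and Corollary \ref{charhom} the tangent map of $u_i$ at $b_{i,z}$ has $m \geq 1$, placing $b_{i,z}$ in $\mathcal{Z}_1$.

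The main obstacle is the classification step: establishing that the tangent map $\varphi_0$ at $0$ has zeros on $\mathbf{S}^2$, thereby ruling out non-trivial homogeneous minimizers of degree strictly below $1/(2\sqrt{k})$ without such zeros. This requires a careful quantitative exploitation of the frequency hypothesis at $0$, the uniform distance of $a$ from $0$, and the dimension and monotonicity results of the preliminaries.
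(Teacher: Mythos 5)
Your compactness setup and the derivation of \eqref{nonisolated1Ea}--\eqref{nonisolated1Eb} match the paper, but there are two genuine gaps. First, the identification $u_\infty = w \circ p$. You propose to first show that the tangent map $\varphi_0$ of $u_\infty$ at $0$ has zeros on $\mathbf{S}^2$ and only then deduce $N_{u_\infty}(0;0^+) \geq 1/(2\sqrt{k})$; you yourself flag this step as ``the main obstacle'' and do not supply it. The far-away zero at $a$ cannot deliver it: blow-ups at $0$ forget everything at positive distance from $0$, and a priori $0$ could be an isolated zero of $u_\infty$ with $a$ a separate zero. The paper's mechanism is in the opposite order: it squeezes $N_v(0;2)$ between the upper bound $1/(2\sqrt{k})$ coming from the hypothesis and strong $H^1$-convergence, and the lower bound $N_v(0;2) \geq N_v(0;0^+) = \lim_r \lim_i N_{u_i}(0;r) \geq \lim_i N_{u_i}(0;0^+) \geq 1/(2\sqrt{k})$; equality in \eqref{almgrenmonotonicity} then forces $v$ to be \emph{globally} homogeneous on $B_2(0)$ by the rigidity statement of Lemma \ref{monotonocity}, and only \emph{then} does the zero at $a_\infty$ (which by homogeneity becomes a whole ray of zeros, so $m \geq 1$) trigger the equality case of Corollary \ref{charhom}. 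Homogeneity is the input that makes the far-away zero useful, not the other way around.

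Second, and more seriously, your argument for \eqref{nonisolated1Ec} fails at both stages. Uniform convergence of $u_i$ to $w \circ p$ does \emph{not} ``provide'' zeros $b_{i,z}$ of $u_i$ on each slice $B^2_\epsilon(0)\times\{z\}$: a sequence of nowhere-vanishing maps can converge uniformly to a map with zeros, so smallness of $|u_i|$ near the axis proves nothing. The paper forces $u_i$ to vanish on each transversal disk by a topological argument: the loop $(P\circ u_i)|_{\partial B^2_\epsilon(0)\times\{z\}}$ is homotopic to $(P\circ v)|_{\partial B^2_\epsilon(0)\times\{z\}}$, which is the nontrivial class in $\Pi_1(\mathbf{RP}^2)\cong \mathbf{Z}_2$ by the classification \eqref{2Dhminimizer}; a non-contractible loop cannot bound a disk on which $P\circ u_i$ is continuous, so $u_i$ must vanish on the disk. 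Your subsequent claim that a frequency bound $N_{u_i}(b_{i,z};r) \leq 1/(2\sqrt{k})+\epsilon'$ forces $m\geq 1$ for the tangent map at $b_{i,z}$ is also unjustified: Corollary \ref{charhom} gives the implication $m\geq 1 \Rightarrow N \geq 1/(2\sqrt{k})$, not its converse, so a small frequency does not exclude $b_{i,z}\in\mathcal{Z}_0$. The paper instead shows membership in $\mathcal{Z}_1$ topologically: if all zeros of $u_i$ on the slice were in $\mathcal{Z}_0^i$ they would be finitely many, and replacing small flat disks around them by hemispheres yields a topological disk with the same non-contractible boundary loop on which $u_i$ never vanishes --- a contradiction with \eqref{HomotopyClass}. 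Without these two topological ingredients the conclusion \eqref{nonisolated1Ec} is not reached.
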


\begin{proof}
Suppose there is an $\epsilon > 0$ such that for each positive integer $i$, there exists a minimizing map $u_i \; B_2(0) \to \mathbf{D}_k$, such that
 $\left \| u_i \right \|_{L^\infty \left ( \partial B_2(0) \right ) } \leq M$, $v^{-1}\{0\} \cap \left ( \overline{B_1(0)} \backslash B_{\frac{1}{2}}(0) \right ) \neq \emptyset$,
and
\begin{equation*}
N_i \left ( 0; 2 \right ) = N_{u_i} \left ( 0; 2 \right ) < \frac{1}{2 \sqrt{k}} + \frac{1}{i},
\end{equation*}
yet one of \eqref{nonisolated1Ea}, \eqref{nonisolated1Eb} or \eqref{nonisolated1Ec} fails for $u_i$.

Note that the bound $\left \| u_i \right \|_{L^\infty \left ( \partial B_2(0) \right ) } \leq M$ and Lemma \ref{maximumprinciple} give a uniform $L^{\infty}$-bound
for $u_i$ in $\overline{B_2(0)}$. Moreover, combining it with the uniform bound for $N_i \left ( 0; 2 \right )$ gives a uniform $H^1$-bound for $u_i$ in $B_2(0)$.
Applying Lemmas \ref{caccioppolitype} and \ref{compactness} with these uniform bounds yields strong convergence in $H^1$ for a subsequence of $u_i$. 
Hence, we may assume that $u_i \to v$ strongly in $H^1 \left ( B_2(0) \right )$, and by the trace theorem strongly in $L^2 \left ( \partial B_2(0) \right )$ as well. 
Consequently, we have:
\begin{equation*}
N_v \left ( 0; 2 \right ) \leq \liminf_{i \to \infty } N_{u_i} \left ( 0; 2 \right ) \leq \frac{1}{2 \sqrt{k}}.
\end{equation*}
However, also taking into account the monotonicity of the Almgren frequency \eqref{almgrenmonotonicity}, we have:
\begin{equation*}
\begin{split}
N_v \left ( 0; 2 \right ) \geq N_v \left ( 0; 0^+ \right ) 
= \lim_{r \to 0 } N_v \left ( 0; r \right ) 
= \lim_{r \to 0 } \lim_{i \to \infty} N_{u_i} \left ( 0; r \right ) \geq 
\\ \lim_{r \to 0 } \lim_{i \to \infty} N_{u_i} \left ( 0; 0^+ \right )
= \lim_{i \to \infty} N_{u_i} \left ( 0; 0^+ \right ) 
\geq \lim_{i \to \infty} \frac{1}{2 \sqrt{k}}
= \frac{1}{2 \sqrt{k}}.
\end{split}
\end{equation*}
In addition, since $u_i$ are uniformly bounded in $L^2$
and $L^\infty$ in $B_2(0)$, by Lemma \ref{holderestimate}, after passing to a subsequence if necessary, we can assume that they converge locally uniformly.
Since there exist $a_i \in u_{i}^{-1}\{0\} \cap \left ( \overline{B_1(0)} \backslash B_{\frac{1}{2}}(0) \right )$, we can also assume that 
$a_i$ converge to $a_\infty \in \overline{B_1(0)} \backslash B_{\frac{1}{2}}(0)$. 
By the locally uniform convergence of $u_i$ to $v$ in $B_2(0)$ and Lemma \ref{holderestimate} applied to $v$, which is a minimizer by
Lemma \ref{compactness}, $v \left ( a_\infty \right ) = 0$. 
Thus, by Corollary \ref{charhom}, we have $v = w \circ p$ for a suitable projection $p \, : \, \mathbf{R}^3 \to \mathbf{R}^2$
such that $p(a) = 0$ and $w : B_2(0) \to \mathbf{D}_k$ energy minimizing. In particular, \eqref{nonisolated1Ea} holds for $i$ large enough.
Choosing our coordinate system accordingly, we can assume $p(y,z) = y$ for $(y,z) \in \mathbf{R}^2 \times \mathbf{R}$.

Since $v$ is H\"older continuous on $B_2(0)$, we can define $c_0$ as its minimum on the compact domain 
$\overline{B_{\frac{3}{2}}(0)} \backslash \left ( B^2_\epsilon (0) \times \mathbf{R} \right )$.
Then by the locally uniform convergence of $u_i$ to $v$ in $B_2(0)$ and the H\"older continuity of $u_i$, for $i$ large enough, $u_i \geq c_0/2$ on
 $\overline{B_{\frac{3}{2}}(0)} \backslash \left ( B^2_\epsilon (0) \times \mathbf{R} \right )$. Consequently, \eqref{nonisolated1Eb} holds for $u_i$,
for $i$ large enough.

Finally, by the locally uniform convergence of $u_i$ to $v$ on $B_2(0)$, for all $z \in [-1,1]$ and for $i$ large enough,
the homotopy class
\begin{equation}
\left[ (P\circ u_i)|_{\partial B^2_\varepsilon(0)\times \{ z\} } \right]\ =\ \left[ (P\circ v)|_{\partial B^2_\varepsilon(0)
\times \{ z\} } \right]\ =\ 1\in \Pi_1 \left (\mathbf{RP}^2 \right )\cong{\mathbf Z}_2,
\label{HomotopyClass}
\end{equation}
as $v = w \circ p$ and possible $w$ are classified in Lemma \ref{2Dhminimizer}. 
Hence, by the argument of Remark \ref{linedefect}, for each $z \in [-1,1]$, $u_i$ must vanish inside $B^2_{\epsilon}(0) \times \{ z \}$ for $i$ large enough. 
In particular, for $i$ large enough:
\begin{equation*}
\overline{B_1(0)} \cap p^{-1} \{ 0 \} \subset \left \{ x \, : \, \mathrm{dist} \left ( x, u_i^{-1} \{0\} \right ) < \epsilon \right \}.
\end{equation*}

Now suppose  
$\left ( B^2_{\epsilon}(0) \times \{ z \} \right ) \cap 
u_i^{-1} \{ 0 \} \subset \mathcal{Z}_0^i$.
Then $\left ( B^2_{\epsilon}(0) \times \{ z \} \right ) \cap 
u_i^{-1} \{ 0 \}$ is a finite set, 
since otherwise a blow-up argument
as in Lemma \ref{3DHomogeneous} would give a
contradiction. We denote this set of zeros as
$\left \{ a_1, a_2, \, ... \, , a_j \right \} $.
Then choosing $\sigma > 0$ small enough,
$u_i$ would be non-zero on the modification of
$B^2_{\epsilon}(0) \times \{ z \} $ obtained
by replacing each flat disk
$B^2_{\sigma} \left ( a_i \right ) \times \{ z \}$ with
the upper hemisphere
 $\partial^+ B^2_{\sigma} \left ( a_i \right )$, which
 is still a topological disk.
Since $u_i$ does not vanish on this topological disk, we have a contradiction with \eqref{HomotopyClass}. Hence, we conclude:
\begin{equation*}
\left ( B^2_{\epsilon}(0) \times \{ z \} \right ) \cap 
\mathcal{Z}^i_1 \neq \emptyset,
\end{equation*}
and consequently:
\begin{equation*}
\overline{B_1(0)} \cap p^{-1} \{ 0 \} \subset \left \{ x \, : \, \mathrm{dist} \left ( x, \mathcal{Z}^i_1 \right ) < \epsilon \right \},
\end{equation*}
when $i$ is sufficiently large.

In conclusion, for $i$ large enough $u_i$ satisfies all three
of \eqref{nonisolated1Ea}, \eqref{nonisolated1Eb},
and \eqref{nonisolated1Ec}, and the lemma is proved by
this contradiction of the initial claim. 
\end{proof}

Secondly, we study the local behavior at an arbitrary
point in $\mathcal{Z}_1$ in a small enough annular
region around it.

\begin{lemma} \label{nonisolated2}
For any energy minimizing map
$u \, : \, B_1(0) \to \mathbf{D}_k$ 
with $0 \in \mathcal{Z}_1$ 
and for every $\epsilon > 0$, there exists an
$R = R \left ( \epsilon, N \left ( 0; 0^+ \right ) \right ) > 0$ and
positive $2m \leq K_0 = 
K_0 \left ( N \left ( 0; 0^+ \right ) \right ) $, so that
for each $r \in (0, R ]$ there is a corresponding
homogeneous energy minimizing map $v_r$ such that
$v_r^{-1} \{ 0  \} \cap \mathbf{S}^2$ has
exactly $2m$ points, and for
\begin{equation*}
u_r(x) = \left ( \fint_{ B_r(0)} |u|^2 \, \mathrm{d}A \right )^{-1/2} u(rx),
\end{equation*}
there holds:
\begin{equation}
\left \| u_r - v_r \right \|_{H^1 \left ( B_1(0) \right ) }  \leq \epsilon.
\label{nonisolatedE2a}
\end{equation}
Moreover, the following inclusions hold:
\begin{equation}
\left ( \overline{B_r(0)} \backslash B_{r/2}(0) \right ) 
\cap u^{-1} \{0\} \subset
\left \{ x \; : \; \mathrm{dist} \left (x, v_r^{-1} \{ 0 \} \right )
< r \epsilon \right \},
\label{nonisolatedE2b}
\end{equation}
and
\begin{equation}
\left ( \overline{B_r(0)} \backslash B_{r/2}(0) \right )
\cap v_r^{-1} \{0\} \subset
\left \{ x \; : \; \mathrm{dist} \left (x, \mathcal{Z}_1 \right )
< r \epsilon \right \}.
\label{nonisolatedE2c}
\end{equation} 
\end{lemma}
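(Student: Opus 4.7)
The plan is to reduce each of the three conclusions to properties of a limiting tangent map via the blow-up compactness of Lemma \ref{existenceHblowups} and the classification in Corollary \ref{charhom}. Since $0 \in \mathcal{Z}_1$, some tangent map at $0$ has a zero on $\mathbf{S}^2$; by Corollary \ref{charhom}, its zero set is the union of $2m$ rays through the origin with $1 \leq m \leq K_0/2$ and $N(0;0^+) \geq 1/(2\sqrt{k})$. I take $2m$ to be this number; a preliminary consistency step shows that every tangent map at $0$ has the same value of $2m$, so the choice is well-defined.

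For \eqref{nonisolatedE2a}, I would argue by contradiction: if no such $R$ exists, there are $\epsilon_0 > 0$ and $r_i \downarrow 0$ such that no homogeneous minimizer with $2m$ zeros on $\mathbf{S}^2$ is $\epsilon_0$-close to $u_{r_i}$ in $H^1(B_1(0))$. By Lemma \ref{existenceHblowups}, one may pass to a subsequence with $u_{r_i} \to v$ strongly in $H^1$ and uniformly on $B_1(0)$, where $v$ is a non-constant homogeneous minimizer of degree $N(0;0^+)$. By the consistency step $v$ has exactly $2m$ zeros on $\mathbf{S}^2$, so taking $v_{r_i} = v$ for $i$ large contradicts the hypothesis. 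The same setup yields \eqref{nonisolatedE2b}: any sequence $x_i \in \overline{B_1(0)} \setminus B_{1/2}(0)$ with $u_{r_i}(x_i) = 0$ and $\mathrm{dist}(x_i, v^{-1}\{0\}) \geq \epsilon$ would, after extracting $x_i \to x_\infty$, give $v(x_\infty) = 0$ by uniform convergence, contradicting the distance bound.

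The most delicate step is \eqref{nonisolatedE2c}, which requires producing zeros of $u$ in $\mathcal{Z}_1$ near each point of $v^{-1}\{0\}$ in the annulus. Around each zero $a \in v^{-1}\{0\} \cap \mathbf{S}^2$, the asymptotic estimate \eqref{asymptoticEstimate} together with Corollary \ref{charhom} identifies $v$ locally with the two-dimensional minimizer \eqref{2Dhminimizer}, whose restriction to a small loop around $a$ carries the nontrivial class in $\pi_1(\mathbf{RP}^2) \cong \mathbf{Z}_2$. By uniform convergence $u_{r_i} \to v$, this homotopy class persists for $u_{r_i}$ on every slice transverse to the ray through $a$, so, as in the proof of Lemma \ref{nonisolated1}, $u_{r_i}$ must vanish on every such slice. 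To place these new zeros in $\mathcal{Z}_1$ rather than $\mathcal{Z}_0$, the frequency perturbation bound \eqref{frequencyperturbation} applied to the homogeneous $v$ and the upper semicontinuity \eqref{uppersemicontinuity} of the frequency together yield $N_u(b; 0^+) \leq 1/(2\sqrt{k}) + \epsilon$ at any such nearby zero $b$; then Lemma \ref{nonisolated1} applied at a scale smaller than the $R(\epsilon)$ it provides furnishes a two-dimensional tangent map at $b$, placing $b$ in $\mathcal{Z}_1$.

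The main obstacle I anticipate is the preliminary consistency of $2m$ across different blow-up sequences: a priori it is not clear that two tangent maps at $0$ produce the same count. Splitting of rays is ruled out by the uniform $d_0$-separation of Lemma \ref{3DHomogeneous} applied at the bounded frequency $N(0;0^+)$. Merging is ruled out by combining this separation with a lower-semicontinuity-of-count argument under $H^1$-strong limits of minimizers, which in turn comes from the nontrivial $\pi_1(\mathbf{RP}^2)$-class detected around each ray (any limit zero persists as a zero in the approximants). Once this consistency is established, both $2m$ and the contradiction arguments above are legitimate, and the three conclusions of the lemma follow together from the single compactness-contradiction mechanism.
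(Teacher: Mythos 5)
Your overall architecture matches the paper's: blow up along a hypothetical bad sequence $r_i\downarrow 0$, use Lemma \ref{existenceHblowups} and Lemma \ref{compactness} to extract a strong-$H^1$ and uniform limit $v$, get \eqref{nonisolatedE2b} from the positive lower bound of $|v|$ off an $\epsilon$-neighborhood of $v^{-1}\{0\}$, and get the zeros needed for \eqref{nonisolatedE2c} from the persistence of the nontrivial class in $\Pi_1(\mathbf{RP}^2)$ around each ray. Two points diverge from the paper, and one of them is a genuine gap. The real problem is your mechanism for placing the produced zeros in $\mathcal{Z}_1$. You propose to transfer a frequency bound from $v$ to $u$ at a nearby zero $b$ and then invoke Lemma \ref{nonisolated1} "at a scale smaller than the $R(\epsilon)$ it provides" to conclude $b\in\mathcal{Z}_1$. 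This does not go through as stated: (i) Lemma \ref{nonisolated1} has the hypothesis $\left ( \overline{B}_1(0)\setminus B_{1/2}(0)\right ) \cap u^{-1}\{0\}\neq\emptyset$ after rescaling about $b$, which you never verify — establishing that the zero set has no "holes" in annuli around $b$ at all small scales is exactly the iteration carried out later in Lemma \ref{StructureLemma}, which itself depends on the present lemma; and (ii) even granting the hypothesis, the conclusions of Lemma \ref{nonisolated1} are a one-scale $H^1$ approximation and the existence of $\mathcal{Z}_1$ points near an axis; they do not identify a tangent map at $b$, so they do not yield $b\in\mathcal{Z}_1$. What the paper does instead is apply the topological disk construction directly: if every zero of $u_{r_i}$ inside the small disk $\partial B_s(0)\cap B_{\beta s}(sa)$ lay in $\mathcal{Z}_0$, these zeros would be finite in number (an accumulation point would, by a blow-up as in Lemma \ref{3DHomogeneous}, lie in $\mathcal{Z}_1$), and replacing small flat subdisks around them by hemispheres produces a topological disk with the same boundary on which $P\circ u_{r_i}$ is defined, contradicting the nontriviality of the boundary class. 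You need this argument (or an equivalent) here; the detour through Lemma \ref{nonisolated1} is circular in spirit and incomplete in its hypotheses. A related minor imprecision: \eqref{uppersemicontinuity} is upper semicontinuity in the base point for a fixed map; comparing $N_{u_{r_i}}(b;\rho)$ with $N_{v}(b;\rho)$ requires the strong $H^1$ convergence at a fixed positive scale (plus the trace theorem), after which monotonicity \eqref{almgrenmonotonicity} propagates the bound downward.

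The second divergence is your treatment of the constancy of $2m$. You aim for the a priori statement that every tangent map at $0$ has the same number of zeros on $\mathbf{S}^2$; ruling out splitting and merging between two limits along sequences is fine via the $d_0$-separation and the $\Pi_1$ persistence, but to conclude constancy over \emph{all} tangent maps you additionally need that the set of tangent maps at $0$ is connected (e.g., via continuity of $r\mapsto u_r$ in $L^2$ together with the compactness), which you assert implicitly but do not supply. The paper sidesteps this entirely: it first proves \eqref{nonisolatedE2b} and \eqref{nonisolatedE2c} for each small $r$ with \emph{some} $v_r$, and then observes that for $\epsilon<d_0/8$ the two inclusions force the cardinality of $v_r^{-1}\{0\}\cap\mathbf{S}^2$ to be determined by $u^{-1}\{0\}$ on the annulus and hence to be independent of $r$ by comparing overlapping scales. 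I would adopt that ordering; your version is repairable but strictly harder than what the lemma requires.
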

\begin{proof}
Once we establish the inclusions \eqref{nonisolatedE2b} and
\eqref{nonisolatedE2c} for all sufficiently small $r$ and
corresponding homogeneous energy minimizing map $v_r$,
whenever $\epsilon < d_0 / 8$, then it follows from Lemma \ref{3DHomogeneous} that the cardinality of $v_r^{-1} \{0\} \cap \mathbf{S}^2$ is uniquely determined by that of
$u^{-1} \{0\} \cap \partial B_r(0) $ and is independent
of $r$.

Suppose for contradiction that there is a an energy minimizing map
$u \, : \, B_1(0) \to \mathbf{D}_k$ and a positive $\epsilon < d_0 / 8$ so that one of \eqref{nonisolatedE2a}, \eqref{nonisolatedE2b}
or \eqref{nonisolatedE2c} fails for some sequence $r_i \to 0$ and
any choice of homogeneous energy minimizing maps $v_{r_i}$.
Denoting $u_i = u_{r_i}$, we can assume by Lemma \ref{existenceHblowups} that for $i$ large enough:
\begin{equation*}
\left \| u_{i} - v \right \|_{H^1 \left ( B_1(0) \right ) } < \epsilon,
\end{equation*}
for some homogenous energy minimizing map 
$v \, : \, \mathbf{R}^3 \to \mathbf{D}_k$ with 
$\| v \|_{L^2 \left ( B_1(0) \right )} = 1$, and 
$v^{-1} \{0\} \cap \mathbf{S}^2 \neq \emptyset$, as
$0 \in \mathcal{Z}_1$. 

Note that $v$ attains a minimum $c_0 > 0$, clearly depending on $\epsilon$, on the compact set
$\left \{ x \, : \, \mathrm{dist} \left (x, v^{-1}\{0\} \right ) \geq \epsilon
\right \} \cap \left ( \overline{B_1(0)} \backslash B_{1/2}(0) \right )$,
and since $u_i$ converge to $v$ locally uniformly, $u_i \geq c_0 / 2$ on this compact set for $i$ large enough. Therefore:
\begin{equation*}
\left ( \overline{B_1(0)} \backslash B_{1/2}(0) \right ) \cap u_i^{-1}\{0\}
\subset \left \{ x \, : \, \mathrm{dist}\left (x, v^{-1}\{0\} \right ) < \epsilon \right \},
\end{equation*}
and by scaling:
\begin{equation*}
\left ( \overline{B_{r_i}(0)} \backslash B_{r_i/2}(0) \right ) \cap u_i^{-1}\{0\}
\subset \left \{ x \, : \, \mathrm{dist}\left (x, v_r^{-1}\{0\} \right ) < r_i \epsilon \right \},
\end{equation*}
for $i$ large enough.

The uniform convergence of $u_i$ to $v$ on the compact set
$$ \left \{ x \, : \, \mathrm{dist} \left (x, v^{-1}\{0\} \right ) \geq \epsilon
\right \} \cap \left ( \overline{B_1(0)} \backslash B_{1/2}(0) \right )$$
also implies that by Lemma \ref{3DHomogeneous}, for $i$ large
enough, for all $s \in [ 1/2, 1 ]$ and $a \in v^{-1}\{0\} \cap \mathbf{S}^2$, the two homotopy classes
\begin{equation*}
\left[ (P\circ u_i)|_{\partial B_s(0)\cap\partial B_{\beta s}(sa)} \right]\ =\ \left[ (P\circ v)|_{\partial B_s(0)\cap\partial
 B_{\beta s}(sa)}\right]\ =\ 1\in \Pi_1 \left (\mathbf{RP}^2 \right )\cong\mathbf{Z}_2,
\end{equation*}
for sufficiently small $\beta > 0$.
Arguing as in the proof of Lemma \ref{nonisolated1}, we observe
that for all such $i$, $s$ and $a$, we have a point
$b_i^a \in u_i^{-1} \{0\}$ with $\left | a - \left | b_i^a \right |^{-1} b_i^a \right | < \epsilon$, and furthermore, $b_i^a \in \mathcal{Z}_1^i$
by a similar topological disk construction.
Thus, we obtain:
\begin{equation*}
\partial B_s(0) \cap \partial B_{\epsilon} (s \, a) \cap \mathcal{Z}^i_1
\neq \emptyset,
\end{equation*}
and hence:
\begin{equation*}
\left ( \overline{B_s(0)} \backslash B_{s/2}(0) \right )
\cap v^{-1} \{ 0 \} \subset
\left \{
x \; : \; \mathrm{dist} \, \left ( x , \mathcal{Z}^i_1 \right ) < s \epsilon
\right \},
\end{equation*}
for all $s \in [1/2,1]$ and $i$ large enough.
In conclusion, letting $v_{r_i} = v$, we obtain a
contradiction, as
\eqref{nonisolatedE2a}, \eqref{nonisolatedE2b} and
\eqref{nonisolatedE2c} hold for $v_{r_i}$ and $u_i$ for
$i$ large enough. 
\end{proof}

We are ready to prove that $\mathcal{Z}_0$ is a discrete set.

\begin{corollary} \label{discretezeros}
For $u \, : \, \Omega \to \mathbf{D}_k$, an energy minimizing map, where $\Omega \subset \mathbf{R}^3$, the set $\mathcal{Z}_0$ is
a discrete subset of $\Omega$.
\end{corollary}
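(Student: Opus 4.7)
The plan is to argue by contradiction. Suppose some $a \in \mathcal{Z}_0$ fails to be isolated (even in $u^{-1}\{0\}$, which is a stronger statement). Without loss of generality translate so that $a = 0$, and pick a sequence $a_i \in u^{-1}\{0\} \setminus \{0\}$ with $a_i \to 0$. Set $r_i = |a_i|$ and $\omega_i = a_i / r_i \in \mathbf{S}^2$. By compactness of $\mathbf{S}^2$, pass to a subsequence so that $\omega_i \to \omega$ for some $\omega \in \mathbf{S}^2$.

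Next, apply Lemma \ref{existenceHblowups} with the specific scales $r_i$. Passing to a further subsequence, the normalized blow-ups
\begin{equation*}
u_i(x) \;=\; \left(\fint_{B_{r_i}(0)} |u|^2 \, \mathrm{d}x\right)^{-1/2} u(r_i x)
\end{equation*}
converge strongly in $H^1(B_2(0))$ and uniformly on $\overline{B_2(0)}$ to a nontrivial homogeneous energy minimizing map $u_0 \colon \mathbf{R}^3 \to \mathbf{D}_k$, which is a tangent map of $u$ at $0$ in the sense of the definition of $\mathcal{Z}_0$.

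The key observation is that, because $u(a_i) = 0$, we have $u_i(\omega_i) = 0$ for every $i$. Uniform convergence of $u_i$ to $u_0$ on $\overline{B_2(0)}$, combined with $\omega_i \to \omega$ in $\mathbf{S}^2 \subset \overline{B_2(0)}$, yields $u_0(\omega) = 0$. Hence $\omega \in u_0^{-1}\{0\} \cap \mathbf{S}^2$, contradicting the defining property of $\mathcal{Z}_0$ that every tangent map at $0$ has empty zero set on $\mathbf{S}^2$. Therefore no such accumulating sequence exists, so every point of $\mathcal{Z}_0$ is isolated in $u^{-1}\{0\}$, and in particular $\mathcal{Z}_0$ is discrete in $\Omega$.

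There is no substantial obstacle here; the entire argument is essentially a routine application of Lemma \ref{existenceHblowups} together with the continuity of $u_0$ on $\mathbf{S}^2$. The only point requiring a little care is to ensure we pass to subsequences in the correct order — first extract $\omega_i \to \omega$ from compactness of $\mathbf{S}^2$, then apply the blow-up compactness to get $u_{i} \to u_0$ along a further subsequence of the same $r_i$ — so that both limits are realized simultaneously and the relation $u_i(\omega_i) = 0$ passes to the limit.
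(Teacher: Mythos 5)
Your blow-up argument is correct as far as it goes: it cleanly shows that every point of $\mathcal{Z}_0$ is isolated in $u^{-1}\{0\}$, which is exactly the content of the first paragraph of the paper's proof (there attributed to the definition of $\mathcal{Z}_0$ together with the blow-up argument behind the minimal separation $d_0$ in Lemma \ref{3DHomogeneous}). The subsequence bookkeeping you flag is fine, and the uniform convergence on $\overline{B_2(0)}$ from Lemma \ref{existenceHblowups} does let you pass $u_i(\omega_i)=0$ to the limit.

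However, this is only the easy half of the statement, and the conclusion you draw from it is too weak. What the corollary must deliver --- and what is used later, e.g.\ in Theorem \ref{localpicture}, where one assumes $\mathcal{Z}_0=\emptyset$ on a whole neighborhood of a point $b\in\mathcal{Z}_1$ --- is that $\mathcal{Z}_0$ has \emph{no accumulation points in $\Omega$}, i.e.\ it is locally finite. Your argument does not exclude a sequence $b_i\in\mathcal{Z}_0$ converging to a point $b_\infty\in\mathcal{Z}_1$: each $b_i$ would still be isolated in $u^{-1}\{0\}$ (so your contradiction never triggers, since the accumulation point $b_\infty$ lies in $\mathcal{Z}_1$, not $\mathcal{Z}_0$), yet $\mathcal{Z}_0$ would fail to be discrete in the sense required. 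Ruling this out is the bulk of the paper's proof and is genuinely nontrivial: one takes $r_i=(4/3)|b_i|$, uses Lemma \ref{nonisolated2} to approximate $u$ at scale $r_i$ by a homogeneous minimizer $v_{r_i}$, locates the nearest point $c_i\in\mathcal{Z}_1$ to $b_i$ (with $|b_i-c_i|/r_i\to 0$), invokes the frequency perturbation estimate \eqref{frequencyperturbation} of Lemma \ref{3DHomogeneous} to get $N_u\left(b_i;2|b_i-c_i|\right)<\frac{1}{2\sqrt{k}}+\delta$, and then applies Lemma \ref{nonisolated1} at $b_i$ at scale $|b_i-c_i|$ to produce a point of $\mathcal{Z}_1$ strictly closer to $b_i$ than $c_i$ --- contradicting the choice of $c_i$. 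None of this machinery appears in your proposal, so your claim that the whole statement is ``a routine application of Lemma \ref{existenceHblowups}'' is not accurate; the proof as written has a genuine gap.
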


\begin{proof}
Lemma \ref{holderestimate}, implies that $u^{-1} \{ 0 \}$ is a relatively closed subset of $\Omega$.
But we observe that if any sequence of points $\left \{ b_i \right \} \subset \mathcal{Z}_0$ has a limit point in $\Omega$, such a point cannot be in $\mathcal{Z}_0$. 
This follows immediately from the definition of $\mathcal{Z}_0$ and the blow-up argument used in establishing the existence of  minimal separation $d_0$ in  Lemma \ref{3DHomogeneous}. 
Hence, any such limit point must lie in $\mathcal{Z}_1$.

Suppose there exists a sequence of points $\left \{ b_i \right \} \subset \mathcal{Z}_0$ with a limit point in $\Omega$. Without loss of generality, we may assume that $b_i \to 0 \in \mathcal{Z}_1$ as
$i \to \infty$. Letting $r_i = (4/3) \left | b_i \right |$ so that
$b_i \in B_{r_i}(0) \backslash B_{r_i / 2}(0)$, by \eqref{nonisolatedE2b}, given any $\epsilon >0$, for $i$ large enough $b_i $ lies in the $r_i \epsilon$-neighborhood
of the zero set of some homogeneous energy minimizing map $v_{r_i}$. Therefore, by Lemma \ref{3DHomogeneous} there exists a point $a_i \in v_{r_i}^{-1} \{ 0 \} \cap \mathbf{S}^2$, 
which is the closest point to $\left | b_i \right |^{-1} b_i$ in this finite set, and
$\left | a_i - \left | b_i \right |^{-1} b_i \right | \to 0$ as $i \to \infty$.

Arguing once again as in the first paragraph, we observe that $\mathcal{Z}_1$ is a closed set. Hence, for each $b_i$, there exists a $c_i$, which is the closest point to $b_i$ in $\mathcal{Z}_1$.
Moreover, \eqref{nonisolatedE2b} and \eqref{nonisolatedE2c}
together imply that $\left | b_i - c_i \right | / r_i \to 0$ as $i \to \infty$.
Fix $\delta = \delta(1/10)$ in Lemma \ref{nonisolated1}. For $i$ large enough, we have by \eqref{nonisolatedE2b} and \eqref{nonisolatedE2c}: 
$ \left | b_i - \left | b_i \right | a_i \right | / r_i <
\beta (\delta/2)$ and $ 2 \left | b_i - c_i \right | / r_i <  \gamma ( \delta/2)$
for $\beta$ and $\gamma$ as in Lemma \ref{3DHomogeneous}.
For such $i$:
\begin{equation*}
N_{v_{r_i}} \left ( b_i ; 2 \frac{ \left | b_i - c_i \right |}{r_i} \right ) < \frac{1}{2 \sqrt{k}} + \frac{\delta}{2}.
\end{equation*}
Since $ \left ( u_{r_i} - v_{r_i}  \right ) \to 0 $ in $H^1 \left ( B_1(0) \right )$, for $i$ large enough we have:
\begin{equation*}
N_{u_{r_i}} \left ( b_i ; 2 \frac{ \left | b_i - c_i \right |}{r_i} \right ) < \frac{1}{2 \sqrt{k}} + \delta,
\end{equation*}
and consequently:
\begin{equation*}
N_{u} \left ( b_i ; 2 \left | b_i - c_i \right | \right ) < \frac{1}{2 \sqrt{k}} + \delta.
\end{equation*}
Letting $s_i = \left | b_i - c_i \right |$, and
\begin{equation*}
w_i(x) = \left ( \fint_{B_{2s_i} \left ( b_i \right )} |u|^2 \, \mathrm{d}A \right )^{-1/2} u \left ( b_i + s_i x \right ),
\end{equation*}
we observe that $d_i = \frac{1}{s_i} \left ( b_i - c_i \right ) \in w_i^{-1} \{ 0 \} \cap \mathbf{S}^2$, and:
\begin{equation*}
N_{w_i} ( 0; 2 ) = N_{u} \left ( b_i ; 2 s_i \right ) < \frac{1}{2 \sqrt{k}}  + \delta.
\end{equation*}
Therefore, applying Lemma \ref{nonisolated1} we obtain for $i$ large enough:
\begin{equation*}
\overline{B_1(0)} \cap w_i^{-1} \{ 0 \} \subset
\left \{ 
x \, : \, \mathrm{dist} \, \left ( x , p^{-1} \{ 0 \} \right ) < 0.1
\right \},
\end{equation*}
\begin{equation*}
\overline{B_1(0)} \cap p^{-1} \{ 0 \} \subset
\left \{
x \, : \, \mathrm{dist} \, \left (x, \mathcal{Z}^i_1 \right ) < 0.1
\right \},
\end{equation*}
for some projection $p \, : \, \mathbf{R}^3 \to \mathbf{R}^2$. 
The first inclusion gives the estimate $\left | d_i \right | = \left | p \left ( d_i \right ) \right | < 0.1$. We choose $e_i$ on the line $p^{-1} \{ 0 \}$ with $ \left | e_i \right | = 1/2$ and $e_i \cdot d_i  > 0$. By the second inclusion there exists an $x_i \in B_{0.1} \left ( e_i \right ) \cap \mathcal{Z}^i_1$. Therefore:
\begin{equation*}
\left | x_i - d_i \right | \leq \left | x_i - e_i \right | + \left | e_i  - d_i \right | < 0.1 + \sqrt{ 0.25 + 0.01 } < 1,
\end{equation*}
and hence:
\begin{equation*}
\left | \left ( s_i x_i +  c_i \right ) - b_i \right | = 
s_i \left | x_i - d_i \right | < s_i,
\end{equation*} 
contradicting the definition of $c_i$. 
\end{proof}

We conclude this section by proving the following corollary, which can be of interest on its own:

\begin{corollary} \label{discretejunctions}
For any $u \, : \, \Omega \to \mathbf{D}_k$, an energy minimizing map, for every compact $K \subset \Omega$ and every $\delta_0$, the set of points $b \in \mathcal{Z}_1 \cap K$
satisfying the frequency lower bound
\begin{equation}
N_u \left ( b; 0^+ \right ) \geq \frac{1}{2 \sqrt{k}} + \delta_0 \label{lowerbound}
\end{equation}
is a discrete set.
\end{corollary}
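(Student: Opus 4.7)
Suppose for contradiction that there is a sequence of distinct points $\{b_i\} \subset \mathcal{Z}_1 \cap K$ satisfying \eqref{lowerbound} with $b_i \to b_\infty \in K$. By Lemma \ref{holderestimate} we have $u(b_\infty)=0$, so Lemma \ref{existenceHblowups} applies to the blow-up sequence
\begin{equation*}
u_i(x) = \left ( \fint_{B_{r_i}(b_\infty)} |u|^2 \, \mathrm{d}x \right )^{-1/2} u \left ( b_\infty + r_i x \right )
\end{equation*}
at scales $r_i = |b_i - b_\infty| \downarrow 0$. Passing to a subsequence, $u_i$ converges strongly in $H^1 \left ( B_2(0) \right )$ and uniformly on $B_2(0)$ to a non-constant homogeneous energy-minimizing map $v : \mathbf{R}^3 \to \mathbf{D}_k$ of degree $N_u \left ( b_\infty; 0^+ \right )$. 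The rescaled points $\tilde{b}_i = \left ( b_i - b_\infty \right )/r_i$ lie on $\mathbf{S}^2$ and, along a further subsequence, converge to some $\tilde{b}_\infty \in \mathbf{S}^2$. Since $u_i \left ( \tilde{b}_i \right ) = 0$, uniform convergence forces $v \left ( \tilde{b}_\infty \right ) = 0$, so $\tilde{b}_\infty \in v^{-1} \{0\} \cap \mathbf{S}^2$.

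Fix $\epsilon \in (0, \delta_0 / 3)$. The plan is to apply the frequency-perturbation estimate \eqref{frequencyperturbation} at $a = \tilde{b}_\infty$ to obtain constants $\beta, \gamma > 0$ (depending on $\epsilon$ and $N_u \left ( b_\infty; 0^+ \right )$) such that $N_v(c;r) \leq \frac{1}{2\sqrt{k}} + \epsilon$ whenever $\left | c / |c| - \tilde{b}_\infty \right | < \beta$ and $r \in (0, |c| \gamma]$. Since $\left | \tilde{b}_i \right | = 1$ and $\tilde{b}_i \to \tilde{b}_\infty$, this yields $N_v \left ( \tilde{b}_i; \gamma \right ) \leq \frac{1}{2\sqrt{k}} + \epsilon$ for $i$ large. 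After shrinking $\gamma$ so that $\tilde{b}_\infty$ is isolated from the other zeros of $v$ on $\mathbf{S}^2$ (which is possible by the minimal separation $d_0$ of Lemma \ref{3DHomogeneous}), the homogeneity of $v$ gives $H_v \left ( \tilde{b}_\infty; \gamma \right ) > 0$. Combining this lower bound on the denominator with the strong $H^1$ convergence $u_i \to v$ and $\tilde{b}_i \to \tilde{b}_\infty$, and selecting $\gamma$ from a full-measure set of admissible radii, I would infer $N_{u_i} \left ( \tilde{b}_i; \gamma \right ) \to N_v \left ( \tilde{b}_\infty; \gamma \right ) \leq \frac{1}{2\sqrt{k}} + \epsilon$. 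By the scaling identity $N_{u_i} \left ( \tilde{b}_i; \gamma \right ) = N_u \left ( b_i; \gamma r_i \right )$ and the monotonicity \eqref{almgrenmonotonicity}, for $i$ large one obtains
\begin{equation*}
N_u \left ( b_i; 0^+ \right ) \leq N_u \left ( b_i; \gamma r_i \right ) \leq \frac{1}{2\sqrt{k}} + 2 \epsilon < \frac{1}{2\sqrt{k}} + \delta_0,
\end{equation*}
contradicting \eqref{lowerbound}.

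The principal obstacle is the passage of the frequency from $v$ to the approximating sequence $u_i$ at the moving base points $\tilde{b}_i$: both the Dirichlet integral in the numerator and the boundary integral in the denominator of $N_{u_i} \left ( \tilde{b}_i; \gamma \right )$ must be shown to converge to their $v$-counterparts under the joint limit $u_i \to v$, $\tilde{b}_i \to \tilde{b}_\infty$. This is handled by a Fubini-type selection of $\gamma$ avoiding pathological radii (to guarantee $L^2$-trace convergence on $\partial B_\gamma \left ( \tilde{b}_i \right )$) together with the uniform positive lower bound on $H_v \left ( \tilde{b}_\infty; \gamma \right )$ coming from the classification of zeros of $v$ on $\mathbf{S}^2$ in Lemma \ref{3DHomogeneous}. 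Once this technicality is resolved, the corollary reduces to the fact embodied in \eqref{frequencyperturbation}, namely that the frequency of $v$ concentrates at the universal value $\frac{1}{2\sqrt{k}}$ near any of its zeros on the sphere, incompatible with the uniform strict excess $\delta_0$ assumed in \eqref{lowerbound}.
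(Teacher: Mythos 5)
Your proposal is correct and follows essentially the same route as the paper: the paper's proof (which defers to the argument of Corollary \ref{discretezeros}) likewise blows up at the accumulation point at scales comparable to $\left | b_i - b_\infty \right |$, identifies a homogeneous limit with a zero on $\mathbf{S}^2$ in the direction of the accumulating points, invokes the frequency-drop estimate \eqref{frequencyperturbation} of Lemma \ref{3DHomogeneous}, transfers the bound back to $u$ via the strong $H^1$ convergence, and concludes by the monotonicity \eqref{almgrenmonotonicity}. The only cosmetic differences are that you work with a single subsequential tangent map (rather than the $r$-dependent approximants $v_r$ of Lemma \ref{nonisolated2}) and that the trace convergence you worry about is already supplied by the uniform convergence in Lemma \ref{existenceHblowups} together with the fact that $H_v \left ( \tilde{b}_\infty ; \gamma \right ) > 0$ by Lemma \ref{Hdimension}.
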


\begin{proof}
Suppose there exist infinitely many $\left \{ b_i \right \} \in \mathcal{Z}_1 \cap K$ satisfying \eqref{lowerbound}.
By passing to a subsequence if necessary, we can assume without loss of generality that $b_i \to b_{\infty} = 0 \in \mathcal{Z}_1 \cap K$.
Likewise, recalling \eqref{uppersemicontinuity} and passing to a subsequence if necessary, we can also assume that
\begin{equation*}
N_u \left ( b_i ; 0^+ \right ) \leq N_u \left ( b_{\infty} ; 0^+ \right ) + 1.
\end{equation*}
Using \eqref{subharmonicaverage}, \eqref{almgrenmonotonicity} and Lemma \ref{Hdimension}, it is easy to verify that the right hand side is finite, unless $u \equiv 0$ in $\Omega$.

Now that we have an upper-bound for respective frequencies at $\left \{ b_i \right \}$,
arguing exactly as in Corollary \ref{discretezeros}, for $\delta_0$ in \eqref{lowerbound} we can find positive $\left \{ r_i \right \}$, $\beta = \beta \left ( \delta_0 \right )$ 
and $\gamma = \gamma \left (\delta_0 \right )$ such that by Lemma \ref{3DHomogeneous} and \eqref{nonisolatedE2a}:
\begin{equation*}
N_u \left ( b_i ; r_i \gamma \right ) < \frac{1}{2 \sqrt{k}} + \delta_0,
\end{equation*}
for $i$ large enough, which together with \eqref{almgrenmonotonicity} contradicts the assumption \eqref{lowerbound}.
\end{proof}

\section{Structure of defects} \label{Structure}
 
In this section we study the structure of defects in the interior of a domain.  We begin with a strengthening of Lemma \ref{nonisolated1}. 

\begin{lemma} \label{StructureLemma}
For every $\epsilon > 0$, there exists a $\delta_0 = \delta_0 ( \epsilon )$ such that if $u \, : \, B_1(0) \to \mathbf{D}_k$ is an energy minimizing map satisfying:
\begin{equation*}
\left ( \overline{B_1(0)} \backslash B_{1/2}(0) \right ) \cap u^{-1} \{0\} \neq \emptyset, \quad \mathrm{and}
\end{equation*}
\begin{equation*}
N(0;2) < \frac{1}{2 \sqrt{k}} + \delta_0,
\end{equation*}
then for each $b \in B_1(0) \cap \mathcal{Z}_1$, $0<r \leq 1/2$, there exists $L_r^b$, a line passing through $b$, such that the following hold:
\begin{equation}
\overline{B_r(b)} \cap u^{-1} \{ 0\} \subset \left \{ x \, : \, \mathrm{dist} \left ( x, \mathcal{Z}_1 \right ) < r \epsilon \right \}, \quad \mathrm{and}
\label{StructureInclusion1}
\end{equation}
\begin{equation}
\overline{B_r(b)} \cap L_r^b \subset \left \{ x \, : \, \mathrm{dist} \left ( x, \mathcal{Z}_1 \right ) < r \epsilon \right \}.
\label{StructureInclusion2}
\end{equation}
Furthermore, there holds
\begin{equation}
B_{1/2}(0) \cap \mathcal{Z}_1 \subset \Gamma \subset B_1(0) \cap \mathcal{Z}_1,
\label{StructureInclusion3}
\end{equation}
for a single embedded H\"older continuous arc $\Gamma$. 
\end{lemma}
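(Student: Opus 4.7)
The plan is to verify the hypothesis of Reifenberg's topological disk theorem on $\mathcal{Z}_1 \cap B_{1/2}(0)$ and then invoke it, as in \cite{Reifenberg60}. The base case is Lemma \ref{nonisolated1} applied at the origin with parameter $\epsilon_1 \ll \epsilon$: choosing $\delta_0 = \delta(\epsilon_1)$, the hypothesis on $N(0;2)$ together with the existence of a zero in $\overline{B_1(0)} \setminus B_{1/2}(0)$ supplies a two-dimensional homogeneous minimizer $w \circ p$ that $H^1$-approximates $u$ in $B_2(0)$, with $u^{-1}\{0\}$ and the line $p^{-1}\{0\}$ mutually $\epsilon_1$-close inside $B_1(0)$.

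To promote this to every $b \in \mathcal{Z}_1 \cap B_1(0)$ and every $r \in (0, 1/2]$, I would apply Lemma \ref{nonisolated1} to the rescaled map $u_{b,r}(x) = (\fint_{B_r(b)}|u|^2)^{-1/2} u(b + rx)$. Two inputs are needed: a frequency bound $N_{u_{b,r}}(0; 2) < 1/(2\sqrt{k}) + \delta(\epsilon)$, and a zero of $u_{b,r}$ in $\overline{B_1(0)} \setminus B_{1/2}(0)$. For the first, since $b \in \mathcal{Z}_1$, Corollary \ref{charhom} forces $N_u(b; 0^+) = 1/(2\sqrt{k})$; then \eqref{frequencyperturbation} from Lemma \ref{3DHomogeneous} applied to the base-scale limit $w \circ p$, combined with the upper semicontinuity \eqref{uppersemicontinuity} and monotonicity \eqref{almgrenmonotonicity}, yields the frequency smallness uniformly in $(b,r)$, provided $\delta_0$ was taken sufficiently small at the base scale. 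For the second, Lemma \ref{nonisolated2} at every small scale produces a zero of $u$ in the correct annulus around $b$. Lemma \ref{nonisolated1} applied to $u_{b,r}$ then supplies the line $L_r^b$ through $b$, and unscaling yields \eqref{StructureInclusion1} and \eqref{StructureInclusion2}.

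With the Reifenberg hypothesis verified uniformly in $b$ and $r$, the one-dimensional case of Reifenberg's topological disk theorem produces an embedded H\"older continuous arc $\Gamma$ with $\mathcal{Z}_1 \cap B_{1/2}(0) \subset \Gamma$. The outer inclusion $\Gamma \subset \mathcal{Z}_1 \cap B_1(0)$ follows because $\Gamma$ is the limit of piecewise-linear interpolations with vertices selected from $\mathcal{Z}_1 \cap B_1(0)$, and $\mathcal{Z}_1$ is relatively closed in $\Omega$: any limit of $\mathcal{Z}_1$ points is a non-isolated zero, so by the minimum separation $d_0$ of Lemma \ref{3DHomogeneous} every tangent map at the limit has zeros on $\mathbf{S}^2$, placing it in $\mathcal{Z}_1$.

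The main obstacle is the uniform frequency control: ensuring that the single smallness assumption $N(0;2) < 1/(2\sqrt{k}) + \delta_0$ at the base scale propagates to the uniform bound $N_u(b; 2r) < 1/(2\sqrt{k}) + \delta(\epsilon)$ at every $b \in \mathcal{Z}_1 \cap B_1(0)$ and every $r \leq 1/2$. This requires combining the $H^1$-closeness to $w\circ p$ from Lemma \ref{nonisolated1} with the perturbation estimate \eqref{frequencyperturbation} and a covering argument, exploiting that $\mathcal{Z}_1 \cap B_1(0)$ is trapped in an $\epsilon_1$-neighborhood of $p^{-1}\{0\}$ so that the estimates of Lemma \ref{3DHomogeneous} transfer to all of $\mathcal{Z}_1 \cap B_1(0)$. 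A secondary subtlety, resolved by iterating Lemma \ref{nonisolated2}, is guaranteeing the annular zero of the rescaled map simultaneously with the frequency bound.
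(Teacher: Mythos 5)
Your overall architecture matches the paper's: rescale, feed the two hypotheses of Lemma \ref{nonisolated1} to $u_{b,r}$ for every $b \in \mathcal{Z}_1 \cap B_1(0)$ and $r \le 1/2$, and finish with Reifenberg. However, the step you yourself flag as the main obstacle --- uniform frequency control --- is not correctly resolved, and the gap is real. First, Corollary \ref{charhom} gives only $N_u(b;0^+) \ge 1/(2\sqrt{k})$ for $b \in \mathcal{Z}_1$, not equality (crossing points can have strictly larger frequency). Second, and more seriously, even if $N_u(b;0^+) = 1/(2\sqrt{k})$ held, monotonicity \eqref{almgrenmonotonicity} runs the wrong way: it gives $N_u(b;2r) \ge N_u(b;0^+)$, whereas what you need is an \emph{upper} bound on $N_u(b;2r)$, which must be inherited from the top scale, i.e.\ from $N_u(b;1)$. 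Third, \eqref{frequencyperturbation} cannot supply that top-scale bound: it controls $N_v(b;r)$ only for $r \le \gamma|b|$, which is vacuous at scale $1$ when $b$ is close to the center, and no covering argument repairs this. The paper's proof opens with exactly the missing ingredient: because the approximating map $v = w\circ p$ is \emph{cylindrical} (translation-invariant along $p^{-1}\{0\}$), one has the strengthened estimate $N_v(b;1) < 1/(2\sqrt{k}) + \delta/2$ at \emph{unit} scale for all $b$ within $\beta_0$ of the axis; transferring this to $u$ via the $H^1$-closeness \eqref{nonisolated1Ea} and the trace theorem gives $N_u(b;1) < 1/(2\sqrt{k}) + \delta_0$, and only then does monotonicity propagate the bound down to all $r \le 1/2$.

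A second, smaller gap is the annular nonemptiness condition $(\overline{B_r(b)}\setminus B_{r/2}(b)) \cap u^{-1}\{0\} \ne \emptyset$ for \emph{all} $r \in (0,1/2]$. Lemma \ref{nonisolated2} only yields this for $r \le R(\epsilon, N_u(b;0^+))$, i.e.\ at scales where $u$ is already well approximated by its tangent behavior at $b$; it says nothing about intermediate scales between $R$ and $1/2$. The paper instead bootstraps from the top down: the base-scale application of Lemma \ref{nonisolated1} at $0$ traps $b$ near the line $L$ and fills $L$ with nearby zeros, which verifies the annular condition at $r = 1/2$; each subsequent application at scale $r$ produces zeros within $r\epsilon$ of the new line throughout $B_r(b)$, hence verifies the annular condition down to scales comparable to $\epsilon r$, and iterating covers all of $(0,1/2]$. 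You should replace the appeal to Lemma \ref{nonisolated2} by this downward iteration of Lemma \ref{nonisolated1}. The Reifenberg conclusion and the closedness argument for $\Gamma \subset \mathcal{Z}_1$ are fine.
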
 

\begin{proof}
Recall that for $v = w \circ p$ for $w$ as in \eqref{2Dhminimizer} and $p$ a projection, given $\epsilon > 0$,
arguing as in Lemma \ref{3DHomogeneous} there exists a $\beta_0 = \beta_0 \left ( \epsilon, k \right )$ such that
whenever $\mathrm{dist} \left ( b, p^{-1} \{ 0 \} \right ) < \beta_0$, there holds:
\begin{equation*}
N_v (b; 1) < \frac{1}{2 \sqrt{k}} + \frac{1}{2} \delta ( \epsilon).
\end{equation*}
Note that this estimate is stronger than \eqref{frequencyperturbation}, which holds for general homogeneneous minimizers,
and it is valid because $v$ is a cylindrical map.
Then using \eqref{nonisolated1Ea} and the trace theorem, we note that for all $b \in B_1(0) \cap \mathcal{Z}_1$,
\begin{equation*}
N_u (b; 1 ) < \frac{1}{2 \sqrt{k}} + \delta_0,
\end{equation*}
where $\delta_0 = \min \left \{ \delta(\epsilon) , \delta \left ( \beta_0 (\epsilon ) \right ) \right \}$.
Consequently, by the monotonicity \eqref{almgrenmonotonicity}: 
\begin{equation*}
\frac{1}{2 \sqrt{k}} \leq N_u \left ( b; 0^+ \right ) \leq N_u \left (b; 2r \right ) \leq N_u \left (b; 1 \right ) < \frac{1}{2 \sqrt{k} } + \delta(\epsilon),
\end{equation*}
for any $r \in (0, 1/2 ]$.

We would like to apply Lemma \ref{nonisolated1} to the map
\begin{equation*}
u_{b,r} = u \left ( b + r (.) \right ),
\end{equation*}
in order to conclude that:
\begin{equation*}
\overline{B_1(0)} \cap u_{b,r}^{-1} \{ 0 \} \subset  \left \{ x \, : \, \mathrm{dist} \left (x, p_{b,r}^{-1} \{ 0 \} \right ) < \epsilon \right \}, \; \mathrm{and}
\end{equation*}
\begin{equation*}
\overline{B_1(0)} \cap p_{b,r}^{-1} \{ 0 \} \subset \left \{ x \, : \, \mathrm{dist} \left (x, \mathcal{Z}_1 \right ) < \epsilon \right \},
\end{equation*}
which yields \eqref{StructureInclusion1} and \eqref{StructureInclusion2} with $L_{b,r} =  p_{b,r}^{-1} \{ 0 \}  + b$ for any $r \in (0, 1/2 ]$. 
While from above we have:
\begin{equation*}
N_{u_{b,r}} (0; 2) = N_u \left (b; 2r \right ) < \frac{1}{2 \sqrt{k} } + \delta(\epsilon),
\end{equation*}
as in the second part of the hypothesis of Lemma \ref{nonisolated1}, there remains to verify the first part of the hypothesis, which is:
\begin{equation*}
\left ( \overline{B_1 (0)} \backslash B_{1/2} (0) \right ) \cap u_{b,r}^{-1} \{0\} \neq \emptyset,
\end{equation*}
or equivalently:
\begin{equation}
\left ( \overline{B_r (b)} \backslash B_{r/2} (b) \right ) \cap u^{-1} \{0\} \neq \emptyset, 
\label{nofineholes}
\end{equation}
for all $r \in (0, 1/2 ]$. 

By Lemma \ref{nonisolated1} applied at $0$, there exists a line $L$ through $0$ such that:
\begin{equation*}
\begin{aligned}
& \overline{B_1(0)} \cap u^{-1} \{ 0 \} \subset \left \{ x \, : \, \mathrm{dist} \left ( x, L \right ) < \epsilon \right \} \quad \mathrm{and} \\
& \overline{B_1(0)} \cap L \subset \left \{ x \, : \, \mathrm{dist} \left ( x, \mathcal{Z}_1 \right ) < \epsilon \right \}.
\end{aligned}
\end{equation*}
Hence, the first inclusion implies that $b \in \mathcal{Z}_1$ lies in an $\epsilon$-neighborhood of $L$. Therefore, $\overline{B_{1/2}(b)} \backslash B_{1/4}(b)$ intersects
$L$ for $\epsilon$ small, whenever $\delta_0$ is sufficiently small. Moreover, for $\epsilon$ small enough, a ball of radius $2 \epsilon$ centered at a point in $L$
fits in $\overline{B_{1/2}(b)} \backslash B_{1/4}(b)$. Thus, the second inclusion implies \eqref{nofineholes} for $r = 1/2$, when $\epsilon$ is small enough, and as a result, the rescaled
version of Lemma \ref{nonisolated1} applies at $b$ with $r = 1/2$. Applying it gives \eqref{nofineholes} for $2 \epsilon$, and consequently the rescaled
version of Lemma \ref{nonisolated1} applies at $b$ with $r = 2 \epsilon$. 
By iteration with $r = 2 \epsilon, 4 \epsilon^2, 8 \epsilon^3...$
we conclude that \eqref{nofineholes} holds for all $r \in (0, 1/2 ]$. Therefore, Lemmas \ref{nonisolated1} and \ref{nonisolated2} apply to $u_{b,r}$ for all
$r \in (0, 1/2 ]$.

Finally, for $\epsilon$ small enough, the rest of the claim follows from Reifenberg's topological disk theorem, cf. \cite{Reifenberg60}, \cite{Morrey66}.
\end{proof}

Now we are ready to describe the zero set of energy minimizing maps in a neighborhood of any given zero.

\begin{theorem} \label{localpicture}
Suppose $u \, : \, \Omega \to \mathbf{D}_k$ is an energy minimizing map. Then each point $b \in u^{-1} \{ 0 \}$ has an open neighborhood $\mathcal{O}$ so that
$ \left ( \mathcal{O} \backslash \{ b \} \right ) \cap u^{-1} \{ 0 \}$ consists of 
an even number of disjoint, embedded H\"older continuous arcs $\Gamma_1, \Gamma_2, ..., \Gamma_{2m}$,
joining $\{b\}$ to a point in $\partial \mathcal{O}$. Moreover, $2m \leq K_0 \left ( N_u \left ( b; 0^+ \right ) \right )$, 
and for $1 \leq i < j \leq 2m$, $r$ sufficiently small and $d_0 = d_0 \left ( N_u \left ( b; 0^+ \right ) \right )$,
$\mathrm{dist} \left ( \Gamma_i \cap \partial B_r(a) , \Gamma_j \cap \partial B_r (a) \right ) \geq \frac{1}{2} d_0 r$.
\end{theorem}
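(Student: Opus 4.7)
The proof splits according to the decomposition $u^{-1}\{0\} = \mathcal{Z}_0 \cup \mathcal{Z}_1$. If $b \in \mathcal{Z}_0$, then Corollary \ref{discretezeros} shows $\mathcal{Z}_0$ is discrete in $\Omega$, and the same proof shows $\mathcal{Z}_1$ is relatively closed. Hence $b$ is a limit point of neither set, so a sufficiently small neighborhood $\mathcal{O}$ satisfies $\mathcal{O} \cap u^{-1}\{0\} = \{b\}$, yielding the conclusion with $2m=0$. Assume henceforth $b \in \mathcal{Z}_1$ and translate so $b=0$. Set $N_0 = N_u(0;0^+)$, let $d_0 = d_0(N_0)$ and $K_0 = K_0(N_0)$ be the constants of Lemma \ref{3DHomogeneous}, and pick $\epsilon > 0$ small in terms of $N_0$, $d_0$, and the threshold $\delta_0$ from Lemma \ref{StructureLemma}.

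By Lemma \ref{nonisolated2}, there exist $R > 0$ and a fixed even integer $2m \leq K_0$ such that for every $r \in (0, R]$ a homogeneous minimizer $v_r$ with $v_r^{-1}\{0\} \cap \mathbf{S}^2 = \{a_1^{(r)}, \ldots, a_{2m}^{(r)}\}$ (pairwise $d_0$-separated) approximates $u_r$ in $H^1(B_1(0))$ to within $\epsilon$, with \eqref{nonisolatedE2b}--\eqref{nonisolatedE2c} in force. For each $r \leq R$ and each $i$, use \eqref{nonisolatedE2c} to pick $c_i^{(r)} \in \mathcal{Z}_1$ with $|c_i^{(r)} - (3r/4) a_i^{(r)}| < r\epsilon$. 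Since only $a_i^{(r)}$ among $\{a_j^{(r)}\}_j$ is within distance $d_0/2$ of $(3/4) a_i^{(r)}$ on $\mathbf{S}^2$, the estimate \eqref{frequencyperturbation} applied to the homogeneous $v_r$ gives $N_{v_r}(c_i^{(r)}; \tau r) < \tfrac{1}{2\sqrt{k}} + \tfrac{1}{2}\delta_0$ for some uniform $\tau = \tau(N_0, \epsilon) > 0$. Because $v_r$ is bounded away from zero on the supporting annulus, the frequency ratio is continuous under the strong $H^1$-convergence $u_r \to v_r$ provided by \eqref{nonisolatedE2a}, hence
\begin{equation*}
N_u \left( c_i^{(r)} ;\, \tau r \right) \;<\; \tfrac{1}{2\sqrt{k}} + \delta_0 .
\end{equation*}
A zero of $u$ in $B_{\tau r/2}(c_i^{(r)}) \setminus B_{\tau r/4}(c_i^{(r)})$ is supplied by \eqref{nonisolatedE2b} at a slightly smaller scale, so Lemma \ref{StructureLemma}, rescaled to $c_i^{(r)}$ at scale $\tau r/2$, yields an embedded H\"older arc $\gamma_i^{(r)} \subset \mathcal{Z}_1$ containing all of $\mathcal{Z}_1$ in a ball of comparable size around $c_i^{(r)}$.

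Iterate at geometric scales $r_\ell = \theta^\ell R$ for small $\theta \in (0,1)$. The $d_0$-separation, together with the upper semicontinuity \eqref{uppersemicontinuity}, uniquely pairs $a_i^{(r_{\ell+1})}$ with some $a_{\sigma_\ell(i)}^{(r_\ell)}$, giving a consistent labeling of the $2m$ branches across scales. For $\theta$ small enough, consecutive arcs $\gamma_i^{(r_\ell)}$ and $\gamma_{\sigma_\ell(i)}^{(r_{\ell+1})}$ have overlapping domains of coverage in the annulus $B_{r_\ell}(0) \setminus B_{r_{\ell+1}}(0)$, and they agree on this overlap by \eqref{StructureInclusion3}. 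Adjoining $\{0\}$ to the concatenated union yields, for each branch $i$, a connected curve $\Gamma_i$ from $0$ to $\partial \mathcal{O}$, H\"older continuous up to $0$ with exponent uniform across scales (from Reifenberg's theorem inside Lemma \ref{StructureLemma}). The separation $\mathrm{dist}(\Gamma_i \cap \partial B_r(0), \Gamma_j \cap \partial B_r(0)) \geq \tfrac{1}{2} d_0 r$ for small $r$ then follows from \eqref{nonisolatedE2b} and the $d_0$-separation on $\mathbf{S}^2$ once $\epsilon < d_0/8$.

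The principal obstacle is not the local construction of arcs but the global gluing across scales into a single H\"older curve per branch: one must verify (i) stability of the branch-labeling, controlled by the $d_0$-separation and \eqref{uppersemicontinuity}; (ii) nontrivial overlap of consecutive local arcs, so that $\Gamma_i$ is a single arc rather than a disconnected tree, secured by \eqref{StructureInclusion3} applied at each scale; and (iii) a uniform H\"older exponent up to the limit point $0$, furnished by Reifenberg's theorem in Lemma \ref{StructureLemma}. The upper bound $2m \leq K_0(N_0)$ and the geometric separation are then immediate from Lemma \ref{3DHomogeneous}.
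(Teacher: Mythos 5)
Your proposal is correct and follows essentially the same route as the paper: reduce to $b \in \mathcal{Z}_1$, invoke Lemma \ref{nonisolated2} at all scales $r \leq R$, transfer the frequency pinching \eqref{frequencyperturbation} from $v_r$ to $u$ at zeros located near each branch point, apply Lemma \ref{StructureLemma} (rescaled) to produce local embedded H\"older arcs, and glue them across geometric scales using the $d_0$-separation. One small slip: the zero of $u$ in the annulus around $c_i^{(r)}$ needed to trigger Lemma \ref{StructureLemma} is supplied by \eqref{nonisolatedE2c} (zeros of $v_r$ lie near $\mathcal{Z}_1$), not by \eqref{nonisolatedE2b}, which only asserts that zeros of $u$ lie near those of $v_r$ and cannot by itself produce a zero of $u$.
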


\begin{proof}
In case $b \in \mathcal{Z}_0$, $b$ is an isolated singularity, and consequently the claim holds trivially. Therefore, we assume $b \in \mathcal{Z}_1$.
By Corollary \ref{discretezeros}, after a suitable translation and scaling, we may also assume that $b = 0$, $\Omega = B_1(0)$ and $\mathcal{Z}_0 = \emptyset$.

For $\epsilon > 0$ to be determined, we would like to apply Lemma \ref{nonisolated2} to $u$ in order to obtain an $R = R \left ( \epsilon, N_u \left ( 0; 0^+ \right ) \right ) > 0$ and
an approximating homogeneous minimizer $v_r$ for each $r \in (0, R]$. 
By \eqref{nonisolatedE2c} there will be at least a point $b_{a,r} \in \partial B_{\frac{3r}{4}}(0) \cap B_{r \epsilon} \left ( \frac{3a}{4} \right ) \cap u^{-1} \{ 0 \}$
for each $a \in v_r^{-1} \{ 0 \} \cap \mathbf{S}^2$.
Before applying Lemma \ref{nonisolated2}, first we choose $\delta = \delta \left ( d_0 /16 \right )$ as in Lemma \ref{StructureLemma}, 
where $d_0$ is as in Lemma \ref{3DHomogeneous}, depending on $N_u \left (0; 0^+ \right )$ only.
Secondly, we choose $\beta = \beta \left ( d_0/2, N_u \left ( 0; 0^+ \right )  \right )$ and $\gamma = \gamma \left ( d_0/2, N_u \left ( 0; 0^+ \right )  \right )$ 
as in Lemma \ref{3DHomogeneous}. 
Finally, we choose $R = R \left ( \epsilon, N_u \left ( 0; 0^+ \right ) \right )$ for $\epsilon$ yet to be determined. Hence, we can update all our parameters in chain, depending
on our choice of $\epsilon$.

For such parameters we obtain:
\begin{equation*}
N_u \left ( b_{a,r} ; 2r \gamma \right ) \leq N_{v_r} \left ( b_{a,r}; 2r \gamma \right ) + \delta/2 < \frac{1}{2 \sqrt{k}} + \delta,
\end{equation*}
where the first inequality follows from \eqref{nonisolatedE2a} for $\epsilon$ chosen sufficiently small, and the second inequality follows from Lemma \ref{3DHomogeneous},
as $\beta$ and $\gamma$ have been chosen sufficiently small with respect to $\delta$. Shrinking $\epsilon$ further so that it is much smaller than $\gamma/2$,
we claim $ \left ( \overline{B_{r\gamma} \left ( b_{a,r} \right )} \backslash B_{\frac{r\gamma}{2}} \left ( b_{a,r} \right ) \right ) \cap u^{-1} \{ 0 \} \neq \emptyset$. 
This follows from the inclusion \eqref{nonisolatedE2c} and
the observation that $B_{r \epsilon} (p)$ is contained the annulus 
$ \overline{B_{r\gamma} \left ( b_{a,r} \right )} \backslash B_{\frac{r\gamma}{2}} \left ( b_{a,r} \right ) $ 
for some $p \in v_r^{-1} \{0\}$, as $ \left | b_{a,r} - \frac{3}{4}a \right | < r \epsilon$. 
Thus, the mapping $u_{a,r} (x) = u \left ( b_{a,r} + r \gamma x \right )$ satisfies the hypothesis of Lemma \ref{StructureLemma}. 
Applying it, by the choice $\delta = \delta \left ( d_0 /16 \right )$, we conclude:
\begin{equation*}
\overline{B_{\frac{r \gamma}{2}} \left ( b_{a,r} \right )} \cap u^{-1} \{0\} \subset \Gamma_{a,r} \subset B_{r\gamma} \left ( b_{a,r} \right ) \cap u^{-1} \{ 0 \}
\subset \left \{ x \, : \, \mathrm{dist} \left ( x, L_{a,r} \right ) < d_0 r \gamma /16 \right \},
\end{equation*}
for some embedded H\"older continuous arc $\Gamma_{a,r}$ and some line $L_{a,r}$ passing through $b_{a,r}$. 

Once again recalling that $ \left | b_{a,r} - \frac{3}{4}a \right | < r \epsilon$, where $\epsilon$ has been chosen to be much smaller than $\gamma/2$, we obtain:
\begin{equation*}
\left ( \overline{B_{\frac{3}{4}r + \frac{1}{4}r\gamma }(0)} \backslash B_{\frac{3}{4}r - \frac{1}{4}r\gamma }(0) \right ) 
\cap \left \{ x \, : \, \mathrm{dist} \left ( x, v_r^{-1} \{ 0 \} \right ) < \epsilon r \right \} \subset \overline{B_{\frac{r\gamma}{2}} \left ( b_{a,r} \right )},
\end{equation*}
for each $a \in v_r^{-1} \{ 0 \} \cap \mathbf{S}^2$ and the corresponding $b_{a,r}$. Consequently:
\begin{equation*}
\left ( \overline{B_s(0)} \backslash B_{\lambda s}(0) \right ) \cap u^{-1} \{ 0 \} \subset \bigcup_{a \in v_r^{-1} \{ 0 \} } \Gamma_{a,r} \subset u^{-1} \{ 0 \},
\end{equation*}
where $s = \frac{3}{4}r + \frac{1}{4} r \gamma $ and $ \lambda = 1 - \frac{2 \gamma}{3+\gamma}$.
By the inclusion \eqref{nonisolatedE2c}, each arc $\Gamma_{a,r}$ intersects both the outer sphere $\partial B_s(0)$ and the inner sphere $\partial B_{\gamma s}(0)$.
Arguing as in Lemma \ref{nonisolated2} we infer that when $r$ is sufficiently small, $\Gamma_{a,r}$ overlaps with $\Gamma_{\bar{a}, \lambda r }$, when $\bar{a}$ is the nearest
point to $a$ in $v_{\lambda r}^{-1} \{ 0 \} \cap \mathbf{S}^2$. We finally note that $\Gamma_{a,r} \cup \Gamma_{\bar{a}, \lambda r}$ is clearly also a H\"older continuous arc.

Now starting with $r = R$, we iterate the above argument with $r = R, \lambda R, \lambda^2 R, ...$ Thus, we obtain the arcs $\Gamma_1, \Gamma_2, ..., \Gamma_{2m}$,
by forming for each $a \in v_R^{-1} \{ 0 \} \cap \mathbf{S}^2$, a union of chains of overlapping arcs starting with $\Gamma_{a,R}$. We let $\mathcal{O}$ to be $B_{\frac{3+\gamma}{4}R}(0)$ for
$R = R \left ( \epsilon, N \left ( 0; 0^+ \right ) \right )$.
Lastly, the bound on $2m$ and the minimal distance
between any two arcs at scale $r$ both follow from the corresponding conclusions of Lemma \ref{3DHomogeneous}.
\end{proof}

The main result follows immediately from Theorem \ref{localpicture}:

\begin{corollary} \label{mainresult}
For any energy minimizing map $u \, : \, \Omega \to \mathbf{D}_k$ and any compact $K \subset \Omega$, 
$u^{-1} \{ 0 \} \cap K$ consists of isolated points and finitely many H\"older curves with only finitely many crossings.
\end{corollary}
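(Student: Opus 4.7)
The plan is a direct finite-cover argument that promotes the local picture of Theorem \ref{localpicture} to the global claim. First, by the continuity of $u$ supplied by Lemma \ref{holderestimate}, the set $u^{-1}\{0\}$ is relatively closed in $\Omega$, so $F := u^{-1}\{0\} \cap K$ is compact. Apply Theorem \ref{localpicture} at every $b \in F$ to produce an open neighborhood $\mathcal{O}_b \ni b$ with the following dichotomy: either $b \in \mathcal{Z}_0$ and $\mathcal{O}_b \cap u^{-1}\{0\} = \{b\}$ (cf. Corollary \ref{discretezeros}), or $b \in \mathcal{Z}_1$ and $(\mathcal{O}_b \setminus \{b\}) \cap u^{-1}\{0\}$ decomposes as a disjoint union $\Gamma_1^b \cup \cdots \cup \Gamma_{2m(b)}^b$ of embedded H\"older continuous arcs, each joining $b$ to a point of $\partial \mathcal{O}_b$, with the quantitative separation at scale $r$ guaranteed by Theorem \ref{localpicture}.

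By compactness of $F$ I extract a finite subcover $\mathcal{O}_{b_1}, \ldots, \mathcal{O}_{b_N}$. The centers $b_j$ lying in $\mathcal{Z}_0$ are isolated points of $u^{-1}\{0\}$, and there being at most $N$ of them, the set of isolated points of $F$ is finite. Call a center $b_j \in \mathcal{Z}_1$ a \emph{crossing} if $2m(b_j) \geq 4$; the number of crossings is then bounded by $N$, producing the "finitely many crossings" in the statement. At centers with $2m(b_j) = 2$, the two H\"older arcs issuing from $b_j$ simply concatenate into a single H\"older arc passing through $b_j$, with no branching.

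The one technical point is to verify that the H\"older arcs produced by the finitely many neighborhoods $\mathcal{O}_{b_j}$ fit together consistently into finitely many global H\"older curves in $F \cap \mathcal{Z}_1$. On an overlap $\mathcal{O}_{b_j} \cap \mathcal{O}_{b_\ell}$, an arc $\Gamma_i^{b_j}$ entering the overlap must locally agree with some $\Gamma_{i'}^{b_\ell}$: indeed, at any interior zero $b' \neq b_j, b_\ell$ on this arc, Theorem \ref{localpicture} applied at $b'$ forces $2m(b') = 2$ (since the arc containing $b'$ is the only local component of $u^{-1}\{0\}$ near $b'$), so that $u^{-1}\{0\}$ is locally exactly one H\"older arc through $b'$, and both $\Gamma_i^{b_j}$ and $\Gamma_{i'}^{b_\ell}$ are segments of it. This uniqueness of local continuation across overlaps is the main (though minor) obstacle; once established, the concatenation is immediate and yields finitely many H\"older continuous curves whose crossings are exactly the finitely many $b_j$ with $2m(b_j) \geq 4$, completing the proof.
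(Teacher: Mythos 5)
Your proposal is correct and follows essentially the same route as the paper, which simply asserts that Corollary \ref{mainresult} ``follows immediately from Theorem \ref{localpicture}''; you supply the intended routine details (compactness of $u^{-1}\{0\}\cap K$, a finite subcover of the neighborhoods from Theorem \ref{localpicture}, and uniqueness of local continuation at non-center zeros, where $2m=2$, to concatenate the arcs). Nothing in your argument deviates from or adds to what the paper leaves implicit.
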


\section*{Acknowledgments}
The first and third authors were in part supported by the National Science Foundation grant DMS-1501000. 
The second author was in part supported by the National Science Foundation grant DMS-1207702.

\bibliography{bibext}
\bibliographystyle{amsplain}
\end{document}